\numberwithin{equation}{section}
\date{}
\newtheorem{theorem}{Theorem}[section]
\newtheorem{proposition}[theorem]{Proposition}
\newtheorem{lemma}[theorem]{Lemma}
\newtheorem{definition}{Definition}[section]
\newtheorem{remark}{Remark}[section]
\theoremstyle{definition}
\newcommand{\be}{\begin{equation}}
\newcommand\ee{\end{equation}}
\newcommand\bes{\begin{eqnarray}}
\newcommand\ees{\end{eqnarray}}
\newcommand\bess{\begin{eqnarray*}}
\newcommand\eess{\end{eqnarray*}}
\title{Global strong solution to the inviscid liquid-gas two-phase flow model in $L^p$ framework}
\author{Zhigang Wu\thanks{Corresponding to:\ zhigangwu@hotmail.com},\ \ Mengqian Liu,\ \ Juanzi Cai;\\
Department of Mathematics, Donghua University, Shanghai, P. R. China.}
\begin{document}

\maketitle
\renewcommand{\thefootnote}{\fnsymbol{footnote}}

\maketitle
%\date{}
{{\bf  Abstract:}} This paper is dedicated to the study of the inviscid liquid-gas two-phase flow model in $\mathbb{R}^d\ (d\geq1)$. We establish the global existence of strong solutions to this system with small initial data in hybrid
Besov spaces based on general $L^p$-norms. Additionally, we obtain the decay estimates of solutions rely on the constructed Lyapunov functional.

\textbf{{\bf Key Words}:}
Global well-posedness, inviscid two-phase flow, $L^p$-framework.

\textbf{{\bf MSC2010}:} 35A09; 35B40; 35Q35.

\section{Introduction}
We are concerned with the following inviscid liquid-gas two-phase flow system in $\mathbb{R}^d$ $(d\geq1)$:
\begin{equation}\label{1.1}
\left\{\begin {array}{ll}
m_t+{\rm div}(m\mathbf{u})=0,\\
n_t+{\rm div}(n\mathbf{u})=0,\\
(m\mathbf{u})_t+{\rm div}(m\mathbf{u}\otimes \mathbf{u})+\nabla P(m,n)=-\alpha m\mathbf u,
\end{array}\right.
\end{equation}
where the unknowns $m(x,t),n(x,t), \mathbf{u}(x,t)=(u_1,\cdots,u_d)$ are the liquid mass and gas mass, the mixed velocity of the liquid and gas, respectively. The function $P=P(m,n)$ denotes the common pressure for both phases and the constant $\alpha>0$ models friction.

To clearly introduce the complicated pressure function $P(m,n)$ in (\ref{1.1}), we shall mention general isothermal compressible two-phase flow model given in \cite{Friis}, which takes the form of
\begin{equation}\label{1.2.0}
\left\{\begin{array}{ll}
(\alpha_l\rho_l)_t+{\rm div}(\alpha_l\rho_l\mathbf{u}_l)=0,\\[2mm]
(\alpha_g\rho_g)_t+{\rm div}(\alpha_g\rho_g\mathbf{u}_g)=0,\\[2mm]
(\alpha_l\rho_l\mathbf{u}_l)_t+{\rm div}(\alpha_l\rho_l\mathbf{u}_l\otimes \mathbf{u}_l)+\alpha_l\nabla P+\Delta P\nabla\alpha_l=Q_l+M_l,\\[2mm]
(\alpha_g\rho_g\mathbf{u}_g)_t+{\rm div}(\alpha_g\rho_g\mathbf{u}_g\otimes \mathbf{u}_g)+\alpha_g\nabla P+\Delta P\nabla\alpha_g=Q_g+M_g,
\end{array}\right.
\end{equation}
where the unknowns $\alpha_l,\alpha_g\in[0,1]$ denote the liquid and gas volume fractions, satisfying the fundamental relation $\alpha_l+\alpha_g=1$, the unknown variables $\rho_l$ and $\rho_g$ denote the liquid and gas densities, satisfying the equations of states $\rho_l=\rho_{l,0}+\frac{P-P_{l,0}}{a_l^2},\ \ \rho_g=\frac{P}{a_g^2}$ with the sonic speeds of the liquid $a_l$ and the sonic speeds of the gas $a_g$. Here $P_{l,0}$ and $\rho_{l,0}$ are the reference pressure and density given as constants.  The interface correction term $\Delta P=P-P^i$ with the pressure at the liquid-gas interface $P^i$, and the external forces $Q_l,\ Q_g$ (friction and gravity) are defined as $Q_l=-\alpha_l\rho_l\mathbf{u}_l$, $Q_g=-\alpha_g\rho_g\mathbf{u}_g$, and $M_l,\ M_g$ ($M_l+M_g=0$) represent interfacial forces modeling interactions between the two phases. By adding the two momentum equations in \eqref{1.2.0}, we have the following:
\begin{equation}\label{1.2.1}
\left\{\begin{array}{ll}
(\alpha_l\rho_l)_t+{\rm div}(\alpha_l\rho_l\mathbf{u}_l)=0,\\[2mm]
(\alpha_g\rho_g)_t+{\rm div}(\alpha_g\rho_g\mathbf{u}_g)=0,\\[2mm]
(\alpha_l\rho_l\mathbf{u}_l+\alpha_g\rho_g\mathbf{u}_g)_t+{\rm div}(\alpha_l\rho_l\mathbf{u}_l\otimes \mathbf{u}_l+\alpha_g\rho_g\mathbf{u}_g\otimes \mathbf{u}_g)+\nabla P(\alpha_l\rho_l,\alpha_g\rho_g)=Q_l+Q_g.
\end{array}\right.
\end{equation}
Consider a no-slip flow with $\mathbf{u}_l=\mathbf{u}_g=\mathbf{u}$ and let $m=\alpha_l\rho_l,\ n=\alpha_g\rho_g$ as in \cite{Zhang1}. Additionally, we can neglect the gas phase in the mixture momentum equation due to the fact that the liquid phase is much heavier than the gas phase, typically to the order $\rho_l/\rho_g\thicksim 10^3$. Thus, one easily finds that \eqref{1.2.1} is exactly \eqref{1.1}. For more information about the above models, we can refer to previous studies \cite{Brennen,Evje1,Ishii} and references therein.

Now, we can go back to state the pressure $P$ in (\ref{1.1}) as follows:
\begin{equation}\label{1.2.2}
P(m,n)=C_0\big(-b_1(m,n)+\sqrt{b_1^2(m,n)+b_2(m,n)}\ \big),\ m=\alpha_l\rho_l,\ n=\alpha_g\rho_g,
\end{equation}
where $C_0=\frac{a_l^2}{2}$, $b_1(m,n)=k_0-m-a_0n$ and $b_2(m,n)=4k_0a_0n$ with $k_0=\rho_{l,0}-\frac{P_{l,0}}{a_l^2}$ and $a_0=\frac{a_g^2}{a_l^2}$.
Suppose that the initial data of (\ref{1.1}) satisfy
\begin{equation}\label{1.2.4}
(m,n,\mathbf{u})(x,t)|_{t=0}=(m_0(x),n_0(x),\mathbf{u}_0(x))\rightarrow(m_\infty,n_\infty,\mathbf{0}),\ as\ |x|\rightarrow\infty,
\end{equation}
where $n_\infty\geq0$ and $m_\infty>(1-{\rm sgn}n_\infty)k_0\geq0$ are given constants.

As far as we known, mathematical models for mixtures of two
phases have been studied for quite a few years, especially for the viscous liquid-gas two-phase flow model as follows:
\begin{equation}\label{1.2.3}
\left\{\begin {array}{ll}
m_t+{\rm div}(m\mathbf{u})=0,\\
n_t+{\rm div}(n\mathbf{u})=0,\\
(m\mathbf{u})_t+{\rm div}(m\mathbf{u}\otimes \mathbf{u})+\nabla P(m,n)=\mu\Delta \mathbf{u}+(\lambda+\mu)\nabla{\rm div}\mathbf{u}.
\end{array}\right.
\end{equation}
For the one-dimensional case of \eqref{1.2.3}, the existence, uniqueness, regularity, asymptotic behavior and decay rate of solutions have been investigated in \cite{Evje2,Evje3,Evje4,Evje5,Evje6,Evje7,Evje8,Fan,Guo,Liu,Tan1,Yao4,Yao5}. For the results about 2D version of the model \eqref{1.2.3}, Yao, Zhang and Zhu \cite{Yao2,Yao3} studied the existence of the global weark solutions and blow-up criterion. The incompressible limit of the model \eqref{1.2.3} with periodic boundary conditions was investigated by Yao, Zhang and Zi \cite{Yao6}.  For a 3D version of the model \eqref{1.2.3}, the blow-up criterions have been studied by Hou $et\ al.$ \cite{Hou}, Wen $et\ al.$ \cite{Wen} and Yao $et\  al.$ \cite{Yao1}.
Under the framework of Besov spaces, Hao and Li \cite{Hao} proved the existence and uniqueness of the global solution to the Cauchy problem \eqref{1.2.3} in $L^2$ framework where the initial data are close to a constant equilibrium state. Then, Zhang and Zhu \cite{Zhang3} proved the global existence and optimal convergence rates of the strong solutions when $H^2$-norm of the initial perturbation around a constant state is sufficiently small and its $L^1$-norm is bounded. In the $L^p\ (1<p\leq d)$ framework, Xu and Yuan \cite{XuF} proved the local well-posedness for large data, provided that the initial liquid mass is bounded away from zero. Cui, Bie and Yao \cite{Cui} employed the Lagrangian approach and improved the local well-posedness result, where the range of $p$ is relaxed as $1<p<2d$. Xu and Zhu \cite{XuJ1} established the global well-posedness and optimal time-dacay rates of strong solutions to \eqref{1.2.3} in the $L^p$ critical regularity framework.

This paper is to analyze the friction effect of the damping on the qualitative behavior of the solution of
the Cauchy problem to the two-phase flow system (\ref{1.1}). There are rather few mathematical results on the inviscid liquid-gas two-phase flow model (\ref{1.1}) compared with the viscous one (\ref{1.2.3}). A first work of the model (\ref{1.1}) was represented by Zhang \cite{Zhang1}, where global existence and decay rates of solutions were established when the initial perturbation is small in $H^3$-norm and bounded in $L^p$-norm ($1\leq p<\frac{6}{5}$). Then, Wang \cite{wang1} reconsidered the global existence and large time behavior of the solutions to the system \eqref{1.1} under only the smallness assumption on $H^3$-norm of the initial perturbation. The free boundary problem for the inviscid two-phase flow model (\ref{1.1}) with moving physical vacuum boundary was investigated in  \cite{Zhang2}. 

So far, up to now, there are no result on the strong solution for the Cauchy problem of (\ref{1.1}), especially in $L^p$-framework of homogeneous (hybrid) Besov spaces. The motivation of this paper is to give a positive answer, and our result is a large extension of the previous works in \cite{wang1,Zhang2}. Due to the similarity of the model \eqref{1.1} to the compressible Euler equations with damping, we can apply some ideas developed in proving the existence, stability and convergence rates of solutions to the compressible Euler equations with damping in \cite{Crin-Barat1,Crin-Barat2,Crin-Barat3} to deal with our problem. More specifically,  Barat and Danchin \cite{Crin-Barat3} investigated the general partially dissipative hyperbolic systems in multi-dimensions in critical $L^2$ Besov space (homogeneous), which is different from the previous works by Xu and Kawashima \cite{XuJ3,XuJ4} in non-homogeneous Besov space. In particular, among them, the damped compressible Euler system is included. Later on, they in \cite{Crin-Barat1,Crin-Barat2} further studied these systems in critical $L^p$ Besov space (homogeneous) with $p\geq2$ for the low frequency of the solution, which is absolutely different from the compressible Navier-Stokes equations in $L^p$-framework by Charve and Danchin \cite{Charve}, Chen, Miao and Zhang \cite{Chen} and Haspot \cite{Haspot}. One can also refer to \cite{Danchin1,Danchin2,Danchin5,Danchin6,Xin,XuJ1} for the other well-known results in this direction for the compressible Navier-Stokes equations. This difference between these two typical models is arising from two facts: the wave operators exist in the high frequency of the spectrum in the damped compressible Euler system and the wave operators exist in the low frequency of the spectrum of the compressible Navier-Stokes equations, and the well-posedness of the hyperbolic system cannot generally be entirely justified in $L^p$-based
spaces for $p\neq2$ (see Brenner \cite{Brenner}). In the spirit of \cite{Crin-Barat1,Crin-Barat2}, some related results were also developed for Jin-Xin model in \cite{Crin-Barat5,Crin-Barat6} for $L^2$-framework and $L^p$-framework, respectively, and \cite{Crin-Barat4} for a hyperbolic-parabolic chemotaxis system for vasculogenesis.

Now, we shall present some new difficulties arising from our analysis on the system (\ref{1.1}). First, the system (\ref{1.1}) does not satisfy the (SK) condition developed by Kawashima and Shizuta, which is used in \cite{Crin-Barat1,Crin-Barat2} for the general partially dissipative hyperbolic systems (including the damped compressible Euler system). Second,  the pressure function $P(n,m)$ in (\ref{1.1}) is rather complex than that of the damped compressible Euler system with the $\gamma$-law in \cite{Crin-Barat1,Crin-Barat2},  and the general pressure function satisfying $P'(\bar\rho)>0$ for the constant reference density $\bar\rho>0$ in Xu \cite{XuJ2}. We partially used the key composition estimate constructed in \cite{XuJ2} to treat the pressure function (\ref{1.2.2}) together with the Taylor expansion of the pressure term, however, the non-dissipative gas phase $n$ (or $\tilde{c}$ given in the following perturbation) in the pressure function also makes the nonlinear analysis more complicated. For instance, see the details in (\ref{3.44}).

Before state our main result, we shall reformulate the original system (\ref{1.1}) as the same argument in \cite{Zhang1,wang1} due to the presence of non-dissipative gas phase $n$. Specifically speaking, we study an equivalent system about the variable $(P,\mathbf{u},\tilde c)$ to (\ref{1.1}). That is, by letting $\tilde{c}=a_0(\frac{n}{m}-\frac{n_\infty}{m_\infty})$, the system (\ref{1.1}) can be rewritten as
\begin{equation}\label{1.3}
\left\{\begin{array}{ll}
P_t+(\mathbf{u}\cdot\nabla) P+P_m m\nabla\cdot\mathbf{u}=0,\\[2mm]
\mathbf{u}_t+(\mathbf{u}\cdot\nabla)\mathbf{u}+\frac{1}{m}\nabla P=-\alpha\mathbf{u},\\[2mm]
\tilde{c}_t+(\mathbf{u}\cdot\nabla)\tilde{c}=0,
\end{array}\right.
\end{equation}
where
\begin{equation*}
\begin{array}{ll}
P(m,\tilde{c})=C_0\big\{(1+c_\infty+\tilde{c})m-k_0+\sqrt{[(1+c_\infty+\tilde{c})m-k_0]^2+4k_0(\tilde{c}+c_\infty)m}\big\},\\[3mm]
P_m(m,\tilde{c})=C_0\big(1+\tilde{c}+c_\infty
+\frac{-(k_0-m-m(\tilde{c}+c_\infty))(1+\tilde{c}+c_\infty)+2k_0(\tilde{c}+c_\infty)}{\sqrt{(k_0-m-m(\tilde{c}+c_\infty))^2+4k_0m(\tilde{c}+c_\infty)}}\big),\\[3mm]
m(P,\tilde{c})=\frac{P^2+2k_0C_0P}{2C_0(1+\tilde{c}+c_\infty)P+4k_0C_0^2(\tilde{c}+c_\infty)},
\end{array}
\end{equation*}
with $c_\infty=\frac{a_0n_\infty}{m_\infty}$. Then the initial data of (\ref{1.3}) correspondingly satisfy
\begin{equation}\label{1.4}
(P,\mathbf{u},\tilde{c})(x,t)|_{t=0}=(P_0(x),\mathbf{u}_0(x),\tilde{c}_0(x))\rightarrow(P_\infty,\mathbf{0},0),\ as\ |x|\rightarrow\infty,
\end{equation}
where $P_\infty>0$ is a given constant satisfying
$$P_\infty=P(m_\infty,0)=C_0\big\{-(k_0-m_\infty-m_\infty c_\infty)+\sqrt{(k_0-m_\infty-m_\infty c_\infty)^2+4k_0m_\infty c_\infty}\big\}.$$
Denote $\kappa_1=\frac{1}{\sqrt{P_m(m_\infty,0)}m_\infty}$ and $\kappa_2=\sqrt{P_m(m_\infty,0)}$. Then using the perturbation $(P,\mathbf{u},\tilde{c})\rightarrow(\tilde{P}+P_\infty,\kappa_1\tilde{\mathbf{u}},\tilde{c})$, we rewrite the initial value problem (\ref{1.3})-(\ref{1.4}) as the following system
\begin{equation}\label{1.5}
\left\{\begin{array}{ll}
\tilde{P}_t+\kappa_2\nabla\cdot\tilde{\mathbf{u}}=\tilde{G}_1,\\[2mm]
\tilde{\mathbf{u}}_t+\kappa_2\nabla\tilde{P}+\alpha\tilde{\mathbf{u}}=\tilde{G}_2,\\[2mm]
\tilde{c}_t+\kappa_1(\tilde{\mathbf{u}}\cdot\nabla)\tilde{c}=0,\\[2mm]
(\tilde{P},\tilde{\mathbf{u}},\tilde{c})(x,t)|_{t=0}=(\tilde{p}_0,\tilde{\mathbf{u}}_0,\tilde{c}_0)\rightarrow(0,\mathbf{0},0),\ as\ |x|\rightarrow\infty,
\end{array}\right.
\end{equation}
with the source terms
\begin{equation*}
\begin{array}{ll}
\tilde{G}_1=\tilde{G}_1(\tilde{P},\tilde{\mathbf{u}},\tilde{c})=-\kappa_1h(\tilde{P},\tilde{c})\nabla\cdot\tilde{\mathbf{u}}-\kappa_1\tilde{\mathbf{u}}\cdot\nabla\tilde{P},\\[2mm]
\tilde{G}_2=\tilde{G}_2(\tilde{P},\tilde{\mathbf{u}},\tilde{c})=-\kappa_1(\tilde{\mathbf{u}}\cdot\nabla)\tilde{\mathbf{u}}-\frac{1}{\kappa_1}(\frac{1}{m}-\frac{1}{m_\infty})\nabla\tilde{P},
\end{array}
\end{equation*}
and the nonlinear function $h(\tilde{P},\tilde{c})$ are defined by
$$h(\tilde{P},\tilde{c})=P_m(m,\tilde{c})m-P_m(m_\infty,0)m_\infty.$$

Now, we are in a position to state our results on the global existence and decay rate of the strong solution in $L^p$ Besov space for the Cauchy problem (\ref{1.5}).

\begin{theorem}\label{1 1.1}
Let $2\leq p\leq 4$ if $d=1$, and $2\leq p\leq {\rm min}\{4,\frac{2d}{d-2}\}$ if $d\geq 2$. There exists a small constant $\eta_1\geq0$ such that if
\begin{equation}\label{1.5(1)}
\mathcal{X}_{p,0}\triangleq \|\tilde P_0\|^l_{\dot B_{p,1}^{\frac{d}{p}-1}}+\|\tilde{\mathbf{u}}_0\|^l_{\dot B_{p,1}^{\frac{d}{p}}}+\|\tilde c_0\|^l_{\dot B_{2,1}^{\frac{d}{2}-1}}+\|(\tilde P_0,\tilde{\mathbf{u}}_0,\tilde c_0)\|^h_{\dot B_{2,1}^{\frac{d}{2}+1}}\leq\eta_1,
\end{equation}
then, the Cauchy problem (\ref{1.5}) admits a unique global strong solution $(\tilde P,\tilde{\mathbf{u}},\tilde c)$ satisfying $P^l\in\mathcal{C}(\mathbb{R}^+;\dot B_{p,1}^{\frac{d}{p}-1})$, $ \tilde{\mathbf{u}}^l\in\mathcal{C}(\mathbb{R}^+;\dot B_{p,1}^{\frac{d}{p}}),\ \tilde c^l\in\mathcal{C}(\mathbb{R}^+;\dot B_{2,1}^{\frac{d}{2}-1}),\ (\tilde P^h,\tilde{\mathbf{u}}^h,\tilde c^h)\in\mathcal{C}(\mathbb{R}^+;\dot B_{2,1}^{\frac{d}{2}+1})$ and
\begin{equation}\label{1.6}
\begin{array}{ll}
\|\tilde P\|^l_{\tilde L_t^\infty(\dot B_{p,1}^{\frac{d}{p}-1})}+\|\tilde P\|^l_{L_t^1(\dot B_{p,1}^{\frac{d}{p}+1})}+\|\tilde{\mathbf{u}}\|^l_{\tilde L_t^\infty(\dot B_{p,1}^{\frac{d}{p}})}+\|\tilde{\mathbf{u}}\|^l_{L_t^1(\dot B_{p,1}^{\frac{d}{p}})}\\[5mm]
+\|\tilde c\|^l_{\tilde L_t^\infty(\dot B_{2,1}^{\frac{d}{2}-1})}+\|(\tilde P,\tilde{\mathbf{u}},\tilde c)\|^h_{\tilde L_t^\infty(\dot B_{2,1}^{\frac{d}{2}+1})}+\|(\tilde P,\tilde{\mathbf{u}})\|^h_{L_t^1(\dot B_{2,1}^{\frac{d}{2}+1})}
\lesssim \mathcal{X}_{p,0}.
\end{array}
\end{equation}
\end{theorem}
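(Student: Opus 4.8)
The plan is to establish Theorem~\ref{1 1.1} by the usual continuation scheme: pair a local existence result for the reformulated system \eqref{1.5} with a global a priori bound showing that the functional $\mathcal{X}_p(t)$ given by the left-hand side of \eqref{1.6} (with time horizon $t$) stays of the size of $\mathcal{X}_{p,0}$ as long as it is small. Accordingly, the core is the a priori estimate, obtained by viewing $(\tilde P,\tilde{\mathbf u})$ as a damped, partially dissipative acoustic system and the equation for $\tilde c$ as a linear transport equation driven by $\tilde{\mathbf u}$, and then closing the loop via product and composition estimates on $\tilde G_1,\tilde G_2$. Throughout I would work with the Littlewood--Paley decomposition and the low/high-frequency splitting already built into $\mathcal X_p$.

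For the $(\tilde P,\tilde{\mathbf u})$-block I would argue separately in low and high frequencies. At low frequency, after applying $\dot\Delta_j$ with $j$ below the threshold, I would follow the effective-unknown/Lyapunov-functional method of Crin-Barat and Danchin \cite{Crin-Barat1,Crin-Barat2,Crin-Barat3}: at each fixed $j$ build a functional comparable to $\|\tilde P_j\|_{L^p}^2+\|\tilde{\mathbf u}_j\|_{L^p}^2$ augmented by a small cross term of the type $2^{j}\int\nabla\tilde P_j\cdot|\tilde{\mathbf u}_j|^{p-2}\tilde{\mathbf u}_j$, whose time derivative exhibits the parabolic dissipation $2^{2j}\|\tilde P_j\|_{L^p}$ for $\tilde P$ and the plain damping $\|\tilde{\mathbf u}_j\|_{L^p}$ for $\tilde{\mathbf u}$; summing against the weights of $\dot B^{d/p-1}_{p,1}$ and $\dot B^{d/p}_{p,1}$ gives the low-frequency lines of \eqref{1.6}. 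The failure of the (SK) condition for \eqref{1.1} is not an obstruction here: the divergence-free part $\mathcal P\tilde{\mathbf u}$ decouples and solves the damped ODE $\partial_t\mathcal P\tilde{\mathbf u}+\alpha\mathcal P\tilde{\mathbf u}=\mathcal P\tilde G_2$, treated directly with an $e^{-\alpha t}$ factor, while all the genuine acoustic interaction, where dissipation is only partial, sits in the curl-free part. At high frequency I would pass to the $L^2$ Besov scale, symmetrize, and use a Lyapunov functional of the same flavour; for $|\xi|$ large the damping yields uniform exponential decay, so $(\tilde P^h,\tilde{\mathbf u}^h)$ is bounded in $\tilde L^\infty_t(\dot B^{d/2+1}_{2,1})\cap L^1_t(\dot B^{d/2+1}_{2,1})$ with neither gain nor loss of regularity.

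For $\tilde c$ I would invoke the transport estimate in homogeneous Besov spaces (the log-loss-free version), in $\dot B^{d/2-1}_{2,1}$ for the low-frequency part and $\dot B^{d/2+1}_{2,1}$ for the high-frequency part, the transport field $\kappa_1\tilde{\mathbf u}$ having $\|\nabla\tilde{\mathbf u}\|_{L^1_t(L^\infty)}$ controlled by $\|\tilde{\mathbf u}\|^l_{L^1_t(\dot B^{d/p}_{p,1})}+\|\tilde{\mathbf u}\|^h_{L^1_t(\dot B^{d/2+1}_{2,1})}$ (the low-frequency piece being spectrally localized, $\nabla$ costs nothing there); since transport does not smooth, $\tilde c$ carries no $L^1_t$-norm, as reflected in \eqref{1.6}. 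The nonlinear terms are then handled by Bony's paraproduct decomposition together with the composition estimate of Xu \cite{XuJ2} and the Taylor expansion of $P$ about $(m_\infty,0)$, which lets one write $h(\tilde P,\tilde c)$ and $\tfrac1m-\tfrac1{m_\infty}$ as smooth functions of $(\tilde P,\tilde c)$ vanishing at the origin; the $L^p$ versus $L^2$ mismatch between the $(\tilde P,\tilde{\mathbf u})$-scale and the $\tilde c$-scale is reconciled through the low-frequency embedding $\dot B^{d/2-1}_{2,1}\hookrightarrow\dot B^{d/p-1}_{p,1}$ (valid for $p\ge2$), so that $\tilde c^l$ can be fed into the $L^p$-based products, the constraints $p\le 4$ and $p\le 2d/(d-2)$ being exactly what makes all these multilinear estimates close. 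Collecting the pieces yields $\mathcal X_p(t)\lesssim\mathcal X_{p,0}+\mathcal X_p(t)^2$, so a bootstrap gives the global bound once $\eta_1$ is small; uniqueness follows from a difference estimate performed at one derivative less (again splitting the damped block from the transport equation), and the stated time continuity comes from the $\tilde L^\infty_t$ bounds by a routine approximation argument. The main obstacle I anticipate is precisely this nonlinear closure in the presence of the non-dissipative phase $\tilde c$: because $\tilde c$ only solves a transport equation it is available with no time integrability and no regularization, so every source-term contribution involving it, entering through $h(\tilde P,\tilde c)$ and $m(P,\tilde c)$, must be absorbed using solely the integrability and smoothing of $\tilde P$ and $\tilde{\mathbf u}$, which forces a careful, somewhat non-standard splitting of the paraproducts (the situation indicated around \eqref{3.44}); a secondary difficulty is that, (SK) failing, the Crin-Barat and Danchin machinery cannot be quoted verbatim and the partially dissipative estimates must be re-derived by hand, exploiting the decoupling of $\mathcal P\tilde{\mathbf u}$ and the special form of the acoustic coupling.
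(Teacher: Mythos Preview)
Your overall architecture is correct and matches the paper: a priori bound plus local existence and continuation, with the $(\tilde P,\tilde{\mathbf u})$ block treated as a partially dissipative hyperbolic system (low frequency in $L^p$, high frequency in $L^2$ via symmetrizer plus cross term) and $\tilde c$ handled as pure transport with no time-integrated norm. Your identification of the main obstacle---that every appearance of $\tilde c$ in the sources must be paid for by the integrability of $\tilde P,\tilde{\mathbf u}$---is exactly the point the paper isolates around \eqref{3.44}.

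The one place your plan differs from the paper is the low-frequency mechanism. You propose the direct $L^p$ Lyapunov functional with the $2^{j}\int\nabla\tilde P_j\cdot|\tilde{\mathbf u}_j|^{p-2}\tilde{\mathbf u}_j$ cross term, treating the Leray projection $\mathcal P\tilde{\mathbf u}$ separately. The paper instead introduces the effective unknown $\mathbf Z=\kappa_2\nabla\tilde P+\alpha\tilde{\mathbf u}$, which diagonalizes the low-frequency block into a genuine heat equation $\tilde P_t-\tfrac{\kappa_2^2}{\alpha}\Delta\tilde P=F_1$ and a damped ODE $\tfrac{1}{\alpha}\mathbf Z_t+\mathbf Z=F_2$, then invokes off-the-shelf semigroup estimates (their Lemmas~\ref{2 2.1}--\ref{2 2.2}) and recovers $\tilde{\mathbf u}$ from $\mathbf Z$ and $\nabla\tilde P$. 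Both routes come from the Crin-Barat--Danchin toolbox and give the same output; the effective-velocity reformulation trades the hand-built $L^p$ hypocoercivity for two cleaner linear estimates and absorbs your $\mathcal P\tilde{\mathbf u}$ decoupling automatically (the curl-free part of $\mathbf Z$ carries the acoustic coupling, the divergence-free part is just $\alpha\mathcal P\tilde{\mathbf u}$). One minor caution: you anticipate closing with $\mathcal X_p(t)\lesssim\mathcal X_{p,0}+\mathcal X_p(t)^2$, but the Taylor expansion of $\tfrac{1}{m}-\tfrac{1}{m_\infty}$ you invoke produces a cubic remainder $\tilde M(\tilde P,\tilde c)\tilde P\tilde c$ which, in the high-frequency commutator analysis, forces terms up to $\mathcal X_p(t)^4$ (cf.\ \eqref{3.47} in the paper). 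This does not affect the bootstrap, but you should expect the extra powers.
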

\begin{remark}Theorem \ref{1 1.1} gives the first global-in-time
existence of the inviscid  liquid-gas two-phase
flow in the Besov space of  $L^p$-type, which improves the previous efforts in \cite{wang1,Zhang2}. One can see that the low frequency of dissipative variables $(\tilde{P},\tilde{u})$ can be bounded in the $L^p$-type Besov space with $p\geq2$ and the low frequency of non-dissipative variable $\tilde{c}$ has to be bounded in the $L^2$-type Besov space.
\end{remark}

\begin{remark}The process of deriving Theorem \ref{1 1.1} can be applied to the other compressible fluid model with damping, for instance, the system of compressible adiabatic flow through porous media in \cite{Wuzg}.

\end{remark}

\begin{theorem}\label{1 1.2}
For $2\leq p\leq 4$ if $d=1$, and $2\leq p\leq {\rm min}\{4,\frac{2d}{d-2}\}$ if $d\geq 2$, let $(\tilde{P},\tilde{\mathbf{u}},\tilde{c})$ be the global solution given in Theorem \ref{1 1.1}. In addition to (\ref{1.5(1)})
and assume that
\begin{equation}\label{1.10}
\|(\tilde{P}_0,\tilde{\mathbf{u}}_0,\tilde{c}_0)\|^l_{\dot{B}_{p,\infty}^{-\sigma_1}}\ is\ uniformly\ bounded\ with\ -\frac{d}{p}\leq-\sigma_1<\frac{d}{p}-1.
\end{equation}
Then, we have the following decay estimates:
%\begin{align}
%\|\tilde{P}\|_{\dot{B}_{p,1}^\sigma}^l\lesssim(1+t)^{-\frac{\sigma_1+\sigma}{2}},\ \ &if\ -\sigma_1<\sigma\leq\frac{d}{p}-1,\label{1.70}\\
%\|\tilde{\mathbf{u}}\|_{\dot{B}_{p,1}^\sigma}^l\lesssim(1+t)^{-\frac{\sigma+\sigma_1+1}{2}},\ \ &if\ -\sigma_1<\sigma\leq\frac{d}{p}-2,\label{1.80}\\
%\|\tilde{\mathbf{u}}\|_{\dot{B}_{p,1}^\sigma}^l\lesssim(1+t)^{-\frac{\frac{d}{p}-1+\sigma_1}{2}},\ \ &if\ \frac{d}{p}-2<\sigma\leq\frac{d}{p}.\label{1.90}
%\end{align}
\begin{align}
\|\Lambda^\sigma\tilde{P}\|_{L^p}\lesssim(1+t)^{-\frac{\sigma_1+\sigma}{2}},\ \ &if\ -\sigma_1<\sigma\leq\frac{d}{p}-1,\label{1.7}\\
\|\Lambda^\sigma\tilde{\mathbf{u}}\|_{L^p}\lesssim(1+t)^{-\frac{\sigma+\sigma_1+1}{2}},\ \ &if\ -\sigma_1<\sigma\leq\frac{d}{p}-2,\label{1.8}\\
\|\Lambda^\sigma\tilde{\mathbf{u}}\|_{L^p}\lesssim(1+t)^{-\frac{\frac{d}{p}-1+\sigma_1}{2}},\ \ &if\ \frac{d}{p}-2<\sigma\leq\frac{d}{p}.\label{1.9}
\end{align}
\end{theorem}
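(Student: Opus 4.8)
The plan is to combine the global bounds from Theorem \ref{1 1.1} with a Lyapunov-functional argument in low frequencies, propagation of the extra negative-regularity norm \eqref{1.10}, and a standard interpolation at the end. First I would split every quantity into low and high frequency pieces via the Littlewood--Paley decomposition. The high-frequency part is the easier one: from \eqref{1.6} we already have $(\tilde P,\tilde{\mathbf u})^h\in L^1_t(\dot B^{\frac dp+1}_{2,1})$ and, because the linearized system possesses a damped mode in high frequencies (the friction term $\alpha\tilde{\mathbf u}$ together with the pressure wave operator acting at high frequency), one expects exponential-in-time decay of $\|(\tilde P,\tilde{\mathbf u},\tilde c)\|^h_{\dot B^{\frac d2+1}_{2,1}}$ after absorbing the nonlinear source terms $\tilde G_1,\tilde G_2$ by the smallness $\mathcal X_{p,0}\le\eta_1$ and a Gronwall argument; this controls $\|\Lambda^\sigma(\tilde P,\tilde{\mathbf u})\|^h_{L^p}$ with any rate we like.

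For the low-frequency part I would first show that the norm $\|(\tilde P_0,\tilde{\mathbf u}_0,\tilde c_0)\|^l_{\dot B^{-\sigma_1}_{p,\infty}}$ in \eqref{1.10} propagates uniformly in time. This is where the structure of \eqref{1.5} matters: acting with $\dot\Delta_j$ on the system, testing the $(\tilde P,\tilde{\mathbf u})$-block against a suitable combination that mixes $\dot\Delta_j\tilde P$ and $\dot\Delta_j\tilde{\mathbf u}$ (a "Lyapunov functional" of the Barat--Danchin type, which for this constant-coefficient linearization is essentially $\|\dot\Delta_j\tilde P\|_{L^p}^2+\|\dot\Delta_j\tilde{\mathbf u}\|_{L^p}^2 + $ a cross term $\langle \dot\Delta_j\tilde P,\nabla\cdot\dot\Delta_j\tilde{\mathbf u}\rangle\cdot 2^{-2j}$-type correction) yields a differential inequality of the form $\frac{d}{dt}\mathcal L_j + c\,2^{2j}\mathcal L_j \lesssim \text{(nonlinear terms)}$ in low frequencies, reflecting the effective heat-type behavior $\partial_t\tilde P\sim\Delta\tilde P$ after eliminating the velocity. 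The transport equation for $\tilde c$ is handled separately: since $\tilde c$ is merely convected, $\|\tilde c\|^l_{\dot B^{-\sigma_1}_{2,\infty}}$ is propagated by the standard transport estimate in Besov spaces using $\int_0^\infty\|\nabla\tilde{\mathbf u}\|_{L^\infty}\,dt\lesssim \mathcal X_{p,0}$, which follows from \eqref{1.6} and the embedding $\dot B^{\frac dp+1}_{p,1}\cap\dot B^{\frac d2+1}_{2,1}\hookrightarrow \text{Lip}$. The nonlinear source terms are estimated in $\dot B^{-\sigma_1}_{p,\infty}$ by product/composition laws (using the key composition estimate from \cite{XuJ2} for the pressure nonlinearity $h(\tilde P,\tilde c)$ and for $\frac1m-\frac1{m_\infty}$), paying attention to the non-dissipative $\tilde c$ exactly as flagged around \eqref{3.44}; the smallness $\eta_1$ closes the estimate.

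Once the negative-index norm $\|(\tilde P,\tilde{\mathbf u},\tilde c)(t)\|^l_{\dot B^{-\sigma_1}_{p,\infty}}\lesssim \mathcal X_{p,0}$ is uniform in $t$, the decay rates \eqref{1.7}--\eqref{1.9} come from interpolating this bound against the $L^1_t$-integrable higher-norm bounds in \eqref{1.6}: writing $\|\dot\Delta_j\tilde P\|_{L^p}$ and bounding the low-frequency Besov norm at regularity $\sigma$ by the geometric-type interpolation between $\dot B^{-\sigma_1}_{p,\infty}$ and a higher regularity, one gets $\|\Lambda^\sigma\tilde P\|^l_{L^p}\lesssim t^{-(\sigma_1+\sigma)/2}$, and similarly (with the extra gain of one power of $t^{-1/2}$ from the friction on $\tilde{\mathbf u}$, visible because $\tilde{\mathbf u}$ sits one derivative higher in the dissipation than $\tilde P$) for \eqref{1.8}; for $\sigma$ in the range $(\frac dp-2,\frac dp]$ in \eqref{1.9} the rate saturates at the endpoint $\sigma=\frac dp-2$ value because the low-frequency interpolation can no longer improve and the high-frequency part decays faster, giving the constant exponent $(\frac dp-1+\sigma_1)/2$. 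The main obstacle I anticipate is constructing the low-frequency Lyapunov functional and propagating the $\dot B^{-\sigma_1}_{p,\infty}$ norm through the nonlinearities: because \eqref{1.5} fails the (SK) condition and $\tilde c$ is genuinely undamped, one cannot invoke the general results of \cite{Crin-Barat1,Crin-Barat2} as a black box, and the coupling of the transported quantity $\tilde c$ into the pressure law must be tracked carefully to make sure it does not destroy the decay of the $(\tilde P,\tilde{\mathbf u})$-block.
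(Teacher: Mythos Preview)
Your overall architecture---propagate the negative norm $\|(\tilde P,\tilde{\mathbf u},\tilde c)\|^l_{\dot B^{-\sigma_1}_{p,\infty}}$, build a Lyapunov functional, interpolate---matches the paper. But two steps are mis-described and, as written, would not close.

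First, the high-frequency part does \emph{not} decay exponentially ``with any rate we like''. The nonlinear sources $\tilde G_1,\tilde G_2$ at high frequency contain factors like $h(\tilde P,\tilde c)\,\mathrm{div}\,\tilde{\mathbf u}$ whose low-frequency pieces decay only polynomially, so Duhamel on the damped block can give at best the polynomial rate inherited from the forcing. The paper avoids this difficulty by keeping the high-frequency norm \emph{inside} the same Lyapunov quantity as the low-frequency one: from the a~priori computations one extracts the pointwise-in-$t$ inequality
\[
\frac{d}{dt}\Bigl(\|\tilde P\|^l_{\dot B^{\frac dp-1}_{p,1}}+\|\tilde{\mathbf u}\|^l_{\dot B^{\frac dp}_{p,1}}+\|(\tilde P,\tilde{\mathbf u})\|^h_{\dot B^{\frac d2+1}_{2,1}}\Bigr)
+\bar c\Bigl(\|\tilde P\|^l_{\dot B^{\frac dp+1}_{p,1}}+\|\tilde{\mathbf u}\|^l_{\dot B^{\frac dp}_{p,1}}+\|(\tilde P,\tilde{\mathbf u})\|^h_{\dot B^{\frac d2+1}_{2,1}}\Bigr)\le 0,
\]
and the high-frequency piece then automatically picks up the rate $(1+t)^{-(\frac dp-1+\sigma_1)/2}$.

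Second, ``interpolating the negative norm against the $L^1_t$-integrable higher-norm bounds in \eqref{1.6}'' is not the mechanism that produces decay; interpolating two time-uniform bounds (or one uniform and one $L^1_t$ bound) does not yield a rate. What the paper does is use interpolation \emph{pointwise in $t$} to compare the dissipation with the energy: writing $E(t)$ for the first bracket above and $D(t)$ for the second, the propagated bound on $\|(\tilde P,\tilde{\mathbf u})\|^l_{\dot B^{-\sigma_1}_{p,\infty}}$ together with $\|\tilde P\|^l_{\dot B^{\frac dp-1}_{p,1}}\lesssim(\|\tilde P\|^l_{\dot B^{-\sigma_1}_{p,\infty}})^{\eta_1}(\|\tilde P\|^l_{\dot B^{\frac dp+1}_{p,1}})^{1-\eta_1}$ and the smallness \eqref{1.6} for the remaining pieces gives $D\gtrsim E^{1/(1-\eta_1)}$, hence the nonlinear ODE $\frac{d}{dt}E+\tilde c_0\,E^{1+\frac{2}{\frac dp-1+\sigma_1}}\le 0$, which solves to $(1+t)^{-(\frac dp-1+\sigma_1)/2}$. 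Only \emph{after} this does one interpolate again between $\dot B^{-\sigma_1}_{p,\infty}$ and $\dot B^{\frac dp-1}_{p,1}$ (resp.\ $\dot B^{\frac dp}_{p,1}$) to get \eqref{1.7} and the preliminary rate for $\tilde{\mathbf u}$. The extra half-power for $\tilde{\mathbf u}$ in \eqref{1.8} is then obtained exactly as you guessed, by returning to the damped equation $(\ref{1.5})_2$ and running Duhamel with the already-established decay of $\nabla\tilde P$; when $\sigma>\frac dp-2$ this saturates and gives \eqref{1.9}.
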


\begin{remark}Choosing $p=2,d=3$ and $\sigma=0,\sigma_1=\frac{d}{2}$, one has the optimal decay of $\|\tilde{P}\|_{L^2}\lesssim(1+t)^{-\frac{3}{4}}$, which were shown in \cite{wang1,Zhang2} in the
framework of Sobolev spaces with higher regularity.
\end{remark}

\section{Priori estimate}
\quad\quad In this section, we give the key a-priori estimates leading to the global existence of solutions for \eqref{1.5}.
\begin{proposition}\label{3 1.1}
Let $2\leq p\leq 4$ if $d=1$, and $2\leq p\leq {\rm min}\{4,\frac{2d}{d-2}\}$ if $d\geq 2$. For given time $T>0$, let $(\tilde P,\tilde{\mathbf{u}},\tilde c)$ be a solution to the Cauchy problem (\ref{1.5}) satisfying for $0\leq t\leq T$ that
$$\|\tilde{P},\tilde{c}\|_{L_t^\infty(L^\infty)}\leq 1.$$
Then, for all $0\leq t\leq T$, it holds that
\begin{align}
\mathcal{X}_p(t)\leq C(\mathcal{X}_{p,0}+\mathcal{X}_p^2(t)+\mathcal{X}_p^3(t)+\mathcal{X}_p^4(t)),
\end{align}
where $\mathcal{X}_p(t)$ is defined by
\begin{align}
\mathcal{X}_p(t)\triangleq\|\tilde P\|^l_{\tilde L_t^\infty(\dot B_{p,1}^{\frac{d}{p}-1})}+\|\tilde P\|^l_{L_t^1(\dot B_{p,1}^{\frac{d}{p}+1})}+\|\tilde{\mathbf{u}}\|^l_{\tilde L_t^\infty(\dot B_{p,1}^{\frac{d}{p}})}+\|\tilde{\mathbf{u}}\|^l_{L_t^1(\dot B_{p,1}^{\frac{d}{p}})}\notag\\[3mm]
+\|\tilde c\|^l_{\tilde L_t^\infty(\dot B_{2,1}^{\frac{d}{2}-1})}+\|(\tilde P,\tilde{\mathbf{u}},\tilde c)\|^h_{\tilde L_t^\infty(\dot B_{2,1}^{\frac{d}{2}+1})}+\|(\tilde P,\tilde{\mathbf{u}})\|^h_{L_t^1(\dot B_{2,1}^{\frac{d}{2}+1})}.
\end{align}
\end{proposition}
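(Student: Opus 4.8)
\emph{Outline of the proof.} The plan is to split the analysis at a fixed frequency threshold $J_0$: on low frequencies ($2^j\lesssim 2^{J_0}$) the dissipative block $(\tilde P,\tilde{\mathbf u})$ of (\ref{1.5}) is parabolic, so one works in the $L^p$-type Besov scale there, while on high frequencies the wave operator reappears and one is forced back to the $L^2$-type scale --- this is why $\mathcal X_p(t)$ is a hybrid functional. The transport variable $\tilde c$ carries no dissipation, hence will only be controlled in $\tilde L^\infty_t$ and in $L^2$-type norms, through the Besov transport estimate. Concretely, I would apply $\dot\Delta_j$ to (\ref{1.5}); on low frequencies, introduce the effective velocity $\mathbf w:=\tilde{\mathbf u}+\frac{\kappa_2}{\alpha}\nabla\tilde P$ (equivalently, carry out the normal-mode/Lyapunov analysis for the damped-Euler-type block, as in Crin-Barat and Danchin \cite{Crin-Barat1,Crin-Barat2,Crin-Barat3}), so that $\tilde P$ satisfies a damped heat-type equation with dissipation rate $\simeq 2^{2j}$ and $\mathbf w$ a damped equation; the heat dissipation yields the two-derivative gain $\|\tilde P\|^l_{L_t^1(\dot B_{p,1}^{\frac{d}{p}+1})}$ and the damping yields the time-integrability $\|\tilde{\mathbf u}\|^l_{L_t^1(\dot B_{p,1}^{\frac{d}{p}})}$ at the same regularity. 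A frequency-localized $L^p$ estimate, multiplied by $2^{j(\frac{d}{p}-1)}$ and summed over $j\le J_0$, gives
\[
\|\tilde P\|^l_{\tilde L_t^\infty(\dot B_{p,1}^{\frac{d}{p}-1})}+\|\tilde P\|^l_{L_t^1(\dot B_{p,1}^{\frac{d}{p}+1})}+\|\tilde{\mathbf u}\|^l_{\tilde L_t^\infty(\dot B_{p,1}^{\frac{d}{p}})}+\|\tilde{\mathbf u}\|^l_{L_t^1(\dot B_{p,1}^{\frac{d}{p}})}\ \lesssim\ \mathcal X_{p,0}+\mathcal R^l(t),
\]
where $\mathcal R^l(t)$ stands for the low-frequency $L^1_t$-Besov norms of $\tilde G_1,\tilde G_2$ at the appropriate regularities.

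On high frequencies, I would run a direct $L^2$ energy estimate on the localized hyperbolic system, adding to $\frac12(\|\dot\Delta_j\tilde P\|_{L^2}^2+\|\dot\Delta_j\tilde{\mathbf u}\|_{L^2}^2)$ the Lyapunov correction $2^{-2j}\langle\dot\Delta_j\tilde{\mathbf u},\nabla\dot\Delta_j\tilde P\rangle$; since at high frequency $2^j$ is comparable to the full operator, each block decays exponentially in time at a fixed rate, so that after summing with the weight $2^{j(\frac{d}{2}+1)}$ one obtains $\|(\tilde P,\tilde{\mathbf u})\|^h_{\tilde L_t^\infty(\dot B_{2,1}^{\frac{d}{2}+1})}+\|(\tilde P,\tilde{\mathbf u})\|^h_{L_t^1(\dot B_{2,1}^{\frac{d}{2}+1})}\lesssim\mathcal X_{p,0}+\mathcal R^h(t)$, with $\mathcal R^h(t)$ the $L^1_t(\dot B_{2,1}^{\frac{d}{2}+1})$ norms of $\tilde G_1,\tilde G_2$. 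For $\tilde c$, the Besov transport estimate applied to $\tilde c_t+\kappa_1\tilde{\mathbf u}\cdot\nabla\tilde c=0$ at regularity $\frac{d}{2}-1$ on low frequencies and $\frac{d}{2}+1$ on high frequencies gives
\[
\|\tilde c\|^l_{\tilde L_t^\infty(\dot B_{2,1}^{\frac{d}{2}-1})}+\|\tilde c\|^h_{\tilde L_t^\infty(\dot B_{2,1}^{\frac{d}{2}+1})}\ \lesssim\ \mathcal X_{p,0}\,\exp\Big(C\int_0^t\|\nabla\tilde{\mathbf u}\|_{\dot B_{2,1}^{\frac{d}{2}}}\,d\tau\Big),
\]
and since $\int_0^t\|\nabla\tilde{\mathbf u}\|_{\dot B_{2,1}^{\frac{d}{2}}}\lesssim\|\tilde{\mathbf u}\|^l_{L_t^1(\dot B_{p,1}^{\frac{d}{p}})}+\|\tilde{\mathbf u}\|^h_{L_t^1(\dot B_{2,1}^{\frac{d}{2}+1})}\lesssim\mathcal X_p(t)$ (using Bernstein's inequality on low frequencies, which requires $p\ge2$), the exponential is $\lesssim 1+\mathcal X_p(t)$ and the $\tilde c$-contribution is $\lesssim\mathcal X_{p,0}(1+\mathcal X_p(t))$.

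It then remains to bound $\mathcal R^l(t)$ and $\mathcal R^h(t)$, i.e. $\tilde G_1=-\kappa_1 h(\tilde P,\tilde c)\nabla\cdot\tilde{\mathbf u}-\kappa_1\tilde{\mathbf u}\cdot\nabla\tilde P$ and $\tilde G_2=-\kappa_1(\tilde{\mathbf u}\cdot\nabla)\tilde{\mathbf u}-\frac{1}{\kappa_1}\big(\frac{1}{m}-\frac{1}{m_\infty}\big)\nabla\tilde P$, in the hybrid norms above. The tools are Bony's decomposition with product laws for mixed $L^p/L^2$ Besov spaces, together with the composition estimate of Xu \cite{XuJ2} for the smooth functions $h(\tilde P,\tilde c)$ and $\frac{1}{m(P,\tilde c)}-\frac{1}{m_\infty}$, both of which vanish at the origin (the assumption $\|\tilde P,\tilde c\|_{L_t^\infty(L^\infty)}\le1$ keeps $m$ bounded away from $0$). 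Each term of $\tilde G_1,\tilde G_2$ is a product of at least two small quantities, hence bounded by $\mathcal X_p^2(t)$, with $\mathcal X_p^3(t)+\mathcal X_p^4(t)$ coming from the higher-order parts of the composition operators and from the exponential in the transport estimate. Inserting these bounds, together with the $\tilde c$-estimate, into the two linear estimates above yields $\mathcal X_p(t)\le C(\mathcal X_{p,0}+\mathcal X_p^2(t)+\mathcal X_p^3(t)+\mathcal X_p^4(t))$.

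The step I expect to be the main obstacle is the nonlinear analysis of the terms in which the non-dissipative variable $\tilde c$ sits inside the pressure, typically $h(\tilde P,\tilde c)\nabla\cdot\tilde{\mathbf u}$: since $\tilde c$ gains no time-integrability and lives only in the $L^2$-type scale, one can at best pair its $\tilde L^\infty_t$ bound against the $L^1_t$ bound of $\nabla\cdot\tilde{\mathbf u}$ at the single regularity $\dot B_{p,1}^{\frac{d}{p}}$ (low) or $\dot B_{2,1}^{\frac{d}{2}+1}$ (high), so the $\tilde c$-part of $h$ must be split off, routed entirely through the $L^2$-type scale, and balanced by interpolation. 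This is precisely where the restrictions on $p$ enter: $2\le p\le4$ so that a product of two low-frequency $L^p$ factors can be transferred into $L^2$ near the threshold via $L^{p/2}\hookrightarrow L^2$, and $p\le\frac{2d}{d-2}$ (for $d\ge2$) so that the negative-regularity low-frequency pieces embed into the $L^2$-type scale. Because (\ref{1.5}) fails the Shizuta--Kawashima (SK) condition, this $\tilde c$-coupling cannot be absorbed into the abstract partially dissipative framework of \cite{Crin-Barat1,Crin-Barat2} and has to be handled by hand.
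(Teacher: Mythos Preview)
Your overall architecture matches the paper's: the effective velocity $\mathbf Z=\kappa_2\nabla\tilde P+\alpha\tilde{\mathbf u}$ (your $\alpha\mathbf w$) to expose the heat/damped structure at low frequency, the Kawashima--type correction $2^{-2j}\langle\dot\Delta_j\tilde{\mathbf u},\nabla\dot\Delta_j\tilde P\rangle$ at high frequency, and the transport estimate for $\tilde c$. You also correctly single out the $\tilde c$-dependence of the pressure as the crux; the paper resolves it exactly by the Taylor splitting you suggest, writing $\frac{1}{m}-\frac{1}{m_\infty}=M'_{\tilde P}(0)\tilde P+M'_{\tilde c}(0)\tilde c+\tilde M(\tilde P,\tilde c)\tilde P\tilde c$, routing the pure $\tilde c$-piece through the $L^2$ scale (taking $p^\ast=\infty$ in the commutator lemma), and closing the remainder by an $\tilde L^2_t$ interpolation.

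There is, however, a genuine gap in your high-frequency step. You propose to treat $\tilde G_1,\tilde G_2$ as source terms and bound them in $L^1_t(\dot B_{2,1}^{\frac d2+1})$. This cannot close: $\tilde G_1$ contains $h(\tilde P,\tilde c)\,\mathrm{div}\,\tilde{\mathbf u}$ and $\tilde{\mathbf u}\cdot\nabla\tilde P$, and the tame product estimate at regularity $\frac d2+1$ forces $\tilde{\mathbf u}\in\dot B_{2,1}^{\frac d2+2}$ (resp.\ $\tilde P\in\dot B_{2,1}^{\frac d2+2}$), one derivative beyond what $\mathcal X_p(t)$ controls --- the standard quasilinear loss. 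The paper avoids this by \emph{paralinearizing and symmetrizing}: it moves $\kappa_1\dot S_{j-1}h\,\mathrm{div}\dot\Delta_j\tilde{\mathbf u}$, $\kappa_1\dot S_{j-1}\tilde{\mathbf u}\cdot\nabla\dot\Delta_j\tilde P$, and the analogous momentum terms to the left, and runs the $L^2$ estimate with the weighted energy $\big(\kappa_2+\tfrac{1}{\kappa_1}\dot S_{j-1}(\tfrac1m-\tfrac1{m_\infty})\big)|\dot\Delta_j\tilde P|^2+\big(\kappa_2+\kappa_1\dot S_{j-1}h\big)|\dot\Delta_j\tilde{\mathbf u}|^2$, chosen so that the quasilinear cross terms cancel after integration by parts. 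Only the commutators $R_j^i=\dot S_{j-1}a\,\partial\dot\Delta_jb-\dot\Delta_j(a\,\partial b)$ remain on the right, and these gain a derivative via Lemma~\ref{2 2.3}. Without this symmetrization the high-frequency bound does not close; your outline should make this step explicit.

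One minor correction: in your transport bound you write $\int_0^t\|\nabla\tilde{\mathbf u}\|_{\dot B_{2,1}^{d/2}}$, but for $p>2$ this is not dominated by $\|\tilde{\mathbf u}\|^l_{L^1_t(\dot B_{p,1}^{d/p})}$ on low frequencies. The paper (and your own next line) instead uses $\|\nabla\tilde{\mathbf u}\|_{\dot B_{p,1}^{d/p}}$ in the commutator Lemma~\ref{2 2.6}, which on low frequencies is $\lesssim\|\tilde{\mathbf u}\|^l_{\dot B_{p,1}^{d/p}}$ by Bernstein.
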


\subsection{The low-frequency estimate}
\quad\quad Let $\mathbf{Z}=\kappa_2\nabla\tilde{P}+\alpha\tilde{\mathbf{u}}$. Then, we obtain
\begin{equation}\label{3.1}
\left\{\begin{array}{ll}
\tilde{P}_t-\frac{\kappa_2^2}{\alpha}\Delta\tilde{P}=-\frac{\kappa_2}{\alpha}{\rm div}\mathbf{Z}+\tilde{G}_1:=F_1,\\[2mm]
\frac{1}{\alpha}\mathbf{Z}_t+\mathbf{Z}=\frac{\kappa_2^3}{\alpha^2}\nabla\Delta\tilde{P}-\frac{\kappa_2^2}{\alpha^2}\nabla{\rm div}\mathbf{Z}+\frac{\kappa_2}{\alpha}\nabla\tilde{G}_1+\tilde{G}_2:=F_2.
\end{array}\right.
\end{equation}
By virtue of Lemma \ref{2 2.1} for $(\ref{3.1})_1$, we have
\begin{equation}\label{3.2}
\begin{array}{ll}
\|\tilde{P}\|^l_{\tilde{L}_T^{\infty}(\dot{B}_{p,1}^{\frac{d}{p}-1})}
+\|\tilde{P}\|^l_{\tilde{L}_T^{1}(\dot{B}_{p,1}^{\frac{d}{p}+1})}\lesssim \|\tilde{P}_0\|^l_{\dot{B}_{p,1}^{\frac{d}{p}-1}}+\|F_1\|^l_{\tilde{L}_T^{1}(\dot{B}_{p,1}^{\frac{d}{p}-1})}\\[6mm]
\quad\quad\quad\quad\quad\quad\quad\quad\quad\lesssim \|\tilde{P}_0\|^l_{\dot{B}_{p,1}^{\frac{d}{p}-1}}+\|\mathbf{Z}\|^l_{\tilde{L}_T^{1}(\dot{B}_{p,1}^{\frac{d}{p}})}
+\|\tilde{G}_1\|^l_{\tilde{L}_T^{1}(\dot{B}_{p,1}^{\frac{d}{p}-1})}.
\end{array}
\end{equation}
As for $\mathbf{Z}$, according to Lemma \ref{2 2.2} for $(\ref{3.1})_2$, we obtain
\begin{equation}\label{3.3}
\begin{array}{ll}
\|\mathbf{Z}\|^l_{\tilde{L}_T^{\infty}(\dot{B}_{p,1}^{\frac{d}{p}})}
+\|\mathbf{Z}\|^l_{\tilde{L}_T^{1}(\dot{B}_{p,1}^{\frac{d}{p}})}\lesssim \|\mathbf{Z}_0\|^l_{\dot{B}_{p,1}^{\frac{d}{p}}}+\|F_2\|^l_{\tilde{L}_T^{1}(\dot{B}_{p,1}^{\frac{d}{p}})}.
\end{array}
\end{equation}
By using $\|u\|^l_{\dot{B}_{p,1}^{s_1}}\lesssim 2^{js}\|u\|^l_{\dot{B}_{p,1}^{s_1-s}}\ (j\leq j_0)$ and the definition of $F_2$, one has
\begin{equation}\label{3.4}
\begin{array}{ll}
\|\mathbf{Z}\|^l_{\tilde{L}_T^{\infty}(\dot{B}_{p,1}^{\frac{d}{p}})}
+\|\mathbf{Z}\|^l_{\tilde{L}_T^{1}(\dot{B}_{p,1}^{\frac{d}{p}})}
\lesssim \|\mathbf{Z}_0\|^l_{\dot{B}_{p,1}^{\frac{d}{p}}}
+2^{2j}\|\tilde{P}\|^l_{\tilde{L}_T^{1}(\dot{B}_{p,1}^{\frac{d}{p}+1})}\\[6mm]
\quad\quad\quad\quad\quad\quad\quad\quad\quad+2^{2j}\|\mathbf{Z}\|^l_{\tilde{L}_T^{1}(\dot{B}_{p,1}^{\frac{d}{p}})}
+2^{2j}\|\tilde{G}_1\|^l_{\tilde{L}_T^{1}(\dot{B}_{p,1}^{\frac{d}{p}-1})}
+\|\tilde{G}_2\|^l_{\tilde{L}_T^{1}(\dot{B}_{p,1}^{\frac{d}{p}})}.
\end{array}
\end{equation}
Then, inserting (\ref{3.3}) into (\ref{3.4}), we deduce
\begin{equation}\label{3.5}
\begin{array}{ll}
\|\mathbf{Z}\|^l_{\tilde{L}_T^{\infty}(\dot{B}_{p,1}^{\frac{d}{p}})}
+\|\mathbf{Z}\|^l_{\tilde{L}_T^{1}(\dot{B}_{p,1}^{\frac{d}{p}})}
\lesssim \|\mathbf{Z}_0\|^l_{\dot{B}_{p,1}^{\frac{d}{p}}}
+2^{2j}\|\tilde{P}_0\|^l_{\dot{B}_{p,1}^{\frac{d}{p}-1}}\\[6mm]
\quad\quad\quad\quad\quad\quad+2\cdot2^{2j}\|\mathbf{Z}\|^l_{\tilde{L}_T^{1}(\dot{B}_{p,1}^{\frac{d}{p}})}
+2\cdot2^{2j}\|\tilde{G}_1\|^l_{\tilde{L}_T^{1}(\dot{B}_{p,1}^{\frac{d}{p}-1})}
+\|\tilde{G}_2\|^l_{\tilde{L}_T^{1}(\dot{B}_{p,1}^{\frac{d}{p}})}.
\end{array}
\end{equation}
We are able to choose the integer $j_0$ such that $2\cdot2^{2j}\leq2^{4j_0}<\frac{1}{2}$. Therefore, combining (\ref{3.2}) and (\ref{3.5}), we obtain
\begin{equation}\label{3.6}
\begin{array}{ll}
\|\tilde{P}\|^l_{\tilde{L}_T^{\infty}(\dot{B}_{p,1}^{\frac{d}{p}-1})}
+\|\tilde{P}\|^l_{\tilde{L}_T^{1}(\dot{B}_{p,1}^{\frac{d}{p}+1})}
+\|\mathbf{Z}\|^l_{\tilde{L}_T^{\infty}(\dot{B}_{p,1}^{\frac{d}{p}})}
+\|\mathbf{Z}\|^l_{\tilde{L}_T^{1}(\dot{B}_{p,1}^{\frac{d}{p}})}\\[6mm]
\lesssim \|\mathbf{Z}_0\|^l_{\dot{B}_{p,1}^{\frac{d}{p}}}
+\|\tilde{P}_0\|^l_{\dot{B}_{p,1}^{\frac{d}{p}-1}}
+\|\tilde{G}_1\|^l_{\tilde{L}_T^{1}(\dot{B}_{p,1}^{\frac{d}{p}-1})}
+\|\tilde{G}_2\|^l_{\tilde{L}_T^{1}(\dot{B}_{p,1}^{\frac{d}{p}})}.
\end{array}
\end{equation}
Next, we bound the nonlinear term $\tilde{G}_1$ and $\tilde{G}_2$. Due to Lemma \ref{2 2.7} and $\dot{B}_{2,1}^{\frac{d}{2}}\hookrightarrow\dot{B}_{p,1}^{\frac{d}{p}}(p\geq 2)$, we get
\begin{equation}\label{3.7}
\begin{array}{ll}
\|\tilde{G}_1\|^l_{\tilde{L}_T^{1}(\dot{B}_{p,1}^{\frac{d}{p}-1})}
\!&\lesssim\|\kappa_1h(\tilde{P},\tilde{c})\nabla\cdot\tilde{\mathbf{u}}\|^l_{\tilde{L}_T^{1}(\dot{B}_{p,1}^{\frac{d}{p}-1})}
+\|\kappa_1\tilde{\mathbf{u}}\cdot\nabla\tilde{P}\|^l_{\tilde{L}_T^{1}(\dot{B}_{p,1}^{\frac{d}{p}-1})}\\[6mm]
&\lesssim\|h(\tilde{P},\tilde{c})\|_{\tilde{L}_T^{\infty}(\dot{B}_{p,1}^{\frac{d}{p}})}\|{\rm div}\tilde{\mathbf{u}}\|_{\tilde{L}_T^{1}(\dot{B}_{p,1}^{\frac{d}{p}-1})}
+\|\tilde{\mathbf{u}}\|_{\tilde{L}_T^{1}(\dot{B}_{p,1}^{\frac{d}{p}})}\|\nabla\tilde{P}\|_{\tilde{L}_T^{\infty}(\dot{B}_{p,1}^{\frac{d}{p}-1})}\\[6mm]
&\lesssim(\|(\tilde{P},\tilde{c})\|^l_{\tilde{L}_T^{\infty}(\dot{B}_{p,1}^{\frac{d}{p}-1})}+\|(\tilde{P},\tilde{c})\|^h_{\tilde{L}_T^{\infty}(\dot{B}_{2,1}^{\frac{d}{2}+1})})
(\|\tilde{\mathbf{u}}\|^l_{\tilde{L}_T^{1}(\dot{B}_{p,1}^{\frac{d}{p}})}+\|\tilde{\mathbf{u}}\|^h_{\tilde{L}_T^{1}(\dot{B}_{2,1}^{\frac{d}{2}+1})})\\[6mm]
&\quad+(\|\tilde{\mathbf{u}}\|^l_{\tilde{L}_T^{1}(\dot{B}_{p,1}^{\frac{d}{p}})}+\|\tilde{\mathbf{u}}\|^h_{\tilde{L}_T^{1}(\dot{B}_{2,1}^{\frac{d}{2}+1})})
(\|\tilde{P}\|^l_{\tilde{L}_T^{\infty}(\dot{B}_{p,1}^{\frac{d}{p}})}+\|\tilde{P}\|^h_{\tilde{L}_T^{\infty}(\dot{B}_{2,1}^{\frac{d}{2}+1})})
\lesssim \mathcal{X}_p^2(t),
\end{array}
\end{equation}
and
\begin{equation}\label{3.8}
\begin{array}{ll}
\|\tilde{G}_2\|^l_{\tilde{L}_T^{1}(\dot{B}_{p,1}^{\frac{d}{p}})}
\!&\lesssim\|\kappa_1\tilde{\mathbf{u}}\cdot\nabla\tilde{\mathbf{u}}\|^l_{\tilde{L}_T^{1}(\dot{B}_{p,1}^{\frac{d}{p}})}
+\|\frac{1}{\kappa_1}(\frac{1}{m}-\frac{1}{m_\infty})\nabla\tilde{P}\|^l_{\tilde{L}_T^{1}(\dot{B}_{p,1}^{\frac{d}{p}})}\\[6mm]
&\lesssim\|\tilde{\mathbf{u}}\|_{\tilde{L}_T^{\infty}(\dot{B}_{p,1}^{\frac{d}{p}})}\|\nabla\tilde{\mathbf{u}}\|_{\tilde{L}_T^{1}(\dot{B}_{p,1}^{\frac{d}{p}-1})}
+\|\frac{1}{m}-\frac{1}{m_\infty}\|_{\tilde{L}_T^{\infty}(\dot{B}_{p,1}^{\frac{d}{p}-1})}\|\nabla\tilde{P}\|_{\tilde{L}_T^{1}(\dot{B}_{p,1}^{\frac{d}{p}})}\\[6mm]
&\lesssim(\|\tilde{\mathbf{u}}\|^l_{\tilde{L}_T^{1}(\dot{B}_{p,1}^{\frac{d}{p}})}+\|\tilde{\mathbf{u}}\|^h_{\tilde{L}_T^{1}(\dot{B}_{2,1}^{\frac{d}{2}+1})})
(\|\tilde{\mathbf{u}}\|^l_{\tilde{L}_T^{1}(\dot{B}_{p,1}^{\frac{d}{p}})}+\|\tilde{\mathbf{u}}\|^h_{\tilde{L}_T^{1}(\dot{B}_{2,1}^{\frac{d}{2}+1})})\\[6mm]
&\quad+(\|(\tilde{P},\tilde{c})\|^l_{\tilde{L}_T^{\infty}(\dot{B}_{p,1}^{\frac{d}{p}-1})}+\|(\tilde{P},\tilde{c})\|^h_{\tilde{L}_T^{\infty}(\dot{B}_{2,1}^{\frac{d}{2}+1})})
(\|\tilde{P}\|^l_{\tilde{L}_T^{1}(\dot{B}_{p,1}^{\frac{d}{p}+1})}+\|\tilde{P}\|^h_{\tilde{L}_T^{1}(\dot{B}_{2,1}^{\frac{d}{2}+1})})\\[6mm]
&\lesssim \mathcal{X}_p^2(t).
\end{array}
\end{equation}
Combining (\ref{3.6}), (\ref{3.7}) and (\ref{3.8}), we have
\begin{equation}\label{3.9}
\begin{array}{ll}
\|\tilde{P}\|^l_{\tilde{L}_T^{\infty}(\dot{B}_{p,1}^{\frac{d}{p}-1})}
+\|\tilde{P}\|^l_{\tilde{L}_T^{1}(\dot{B}_{p,1}^{\frac{d}{p}+1})}
+\|\mathbf{Z}\|^l_{\tilde{L}_T^{\infty}(\dot{B}_{p,1}^{\frac{d}{p}})}
+\|\mathbf{Z}\|^l_{\tilde{L}_T^{1}(\dot{B}_{p,1}^{\frac{d}{p}})}\\[6mm]
\lesssim \|\mathbf{Z}_0\|^l_{\dot{B}_{p,1}^{\frac{d}{p}}}
+\|\tilde{P}_0\|^l_{\dot{B}_{p,1}^{\frac{d}{p}-1}}+\mathcal{X}_p(t)^2.
\end{array}
\end{equation}
From $\mathbf{Z}=\kappa_2\nabla\tilde{P}+\alpha\tilde{\mathbf{u}}$, we easily get
\begin{equation}\label{3.10}
\begin{array}{ll}
\|\tilde{\mathbf{u}}\|^l_{\tilde{L}_T^{\infty}(\dot{B}_{p,1}^{\frac{d}{p}})}
&\lesssim2^{2j_0}\|\tilde{P}\|^l_{\tilde{L}_T^{\infty}(\dot{B}_{p,1}^{\frac{d}{p}-1})}
+\|\mathbf{Z}\|^l_{\tilde{L}_T^{\infty}(\dot{B}_{p,1}^{\frac{d}{p}})}\\[6mm]
&\lesssim\|\tilde{P}\|^l_{\tilde{L}_T^{\infty}(\dot{B}_{p,1}^{\frac{d}{p}-1})}
+\|\mathbf{Z}\|^l_{\tilde{L}_T^{\infty}(\dot{B}_{p,1}^{\frac{d}{p}})},\\[6mm]
\|\tilde{\mathbf{u}}\|^l_{\tilde{L}_T^{1}(\dot{B}_{p,1}^{\frac{d}{p}})}
&\lesssim\|\tilde{P}\|^l_{\tilde{L}_T^{1}(\dot{B}_{p,1}^{\frac{d}{p}+1})}
+\|\mathbf{Z}\|^l_{\tilde{L}_T^{1}(\dot{B}_{p,1}^{\frac{d}{p}})}.
\end{array}
\end{equation}
One deduces from (\ref{3.9}) and (\ref{3.10}) that
\begin{equation}\label{3.11}
\begin{array}{ll}
\|\tilde{P}\|^l_{\tilde{L}_T^{\infty}(\dot{B}_{p,1}^{\frac{d}{p}-1})}
+\|\tilde{P}\|^l_{\tilde{L}_T^{1}(\dot{B}_{p,1}^{\frac{d}{p}+1})}
+\|\tilde{\mathbf{u}}\|^l_{\tilde{L}_T^{\infty}(\dot{B}_{p,1}^{\frac{d}{p}})}
+\|\tilde{\mathbf{u}}\|^l_{\tilde{L}_T^{1}(\dot{B}_{p,1}^{\frac{d}{p}})}\\[6mm]
+\|\mathbf{Z}\|^l_{\tilde{L}_T^{\infty}(\dot{B}_{p,1}^{\frac{d}{p}})}
+\|\mathbf{Z}\|^l_{\tilde{L}_T^{1}(\dot{B}_{p,1}^{\frac{d}{p}})}
\lesssim \mathcal{X}_{p,0}+\mathcal{X}_p^2(t).
\end{array}
\end{equation}

Then, we give the low-frequency estimate of $\tilde{c}$.
Applying $\dot{\Delta}_j(\ref{1.5})_3$ to $\dot{\Delta}_j\tilde{c}$ and integrate over $\mathbb{R}^d$. It holds that
\begin{equation}\label{3.12}
\begin{array}{ll}
\frac{1}{2}\frac{d}{dt}\|\dot{\Delta}_j\tilde{c}\|_{L^2}^2=-\kappa_1\langle \dot{\Delta}_j\tilde{c}, [\dot{\Delta}_j,\tilde{\mathbf{u}}]\nabla\tilde{c} \rangle
-\kappa_1\langle\dot{\Delta}_j\tilde{c},\tilde{\mathbf{u}}\nabla\dot{\Delta}_j\tilde{c} \rangle.
\end{array}
\end{equation}
Using function of integration by parts and H\"{o}lder inequality, we have
\begin{equation}\label{3.13}
\begin{array}{ll}
\frac{1}{2}\frac{d}{dt}\|\dot{\Delta}_j\tilde{c}\|_{L^2}^2
\!\!\!\!&=-\kappa_1\langle \dot{\Delta}_j\tilde{c}, [\dot{\Delta}_j,\tilde{\mathbf{u}}]\nabla\tilde{c} \rangle
+\frac{\kappa_1}{2}\langle\dot{\Delta}_j\tilde{c},{\rm div}\tilde{\mathbf{u}}\dot{\Delta}_j\tilde{c} \rangle\\[3mm]
&\lesssim\|\dot{\Delta}_j\tilde{c}\|_{L^2} \|[\dot{\Delta}_j,\tilde{\mathbf{u}}]\nabla\tilde{c} \|_{L^2}+
\|\dot{\Delta}_j\tilde{c}\|_{L^2}^2\ \|{\rm div}\tilde{\mathbf{u}}\|_{L^\infty}.
\end{array}
\end{equation}
Multiplying (\ref{3.13}) by $\frac{1}{\|\dot{\Delta}_j\tilde{c}\|_{L^2}}$ and integrate about $t$, one has
\begin{equation}\label{3.14}
\begin{array}{ll}
\displaystyle\|\dot{\Delta}_j\tilde{c}\|_{\tilde{L}_T^\infty({L^2})}
\lesssim\|\dot{\Delta}_j\tilde{c}_0\|_{L^2}+\int_0^T\|[\dot{\Delta}_j,\tilde{\mathbf{u}}]\nabla\tilde{c} \|_{L^2}dt+\int_0^T\|\dot{\Delta}_j\tilde{c}\|_{L^2}\ \|{\rm div}\tilde{\mathbf{u}}\|_{L^\infty}dt.
\end{array}
\end{equation}
By virtue of Lemma \ref{2 2.6} and $\dot{B}_{2,1}^{\frac{d}{2}}\hookrightarrow\dot{B}_{p,1}^{\frac{d}{p}}\ (p\geq 2)$, we easily get
\begin{equation}\label{3.15}
\begin{array}{ll}
\|\tilde{c}\|^l_{\tilde{L}_T^\infty(\dot{B}_{2,1}^{\frac{d}{2}-1})}
&\!\!\displaystyle\lesssim\|\tilde{c}_0\|^l_{\dot{B}_{2,1}^{\frac{d}{2}-1}}+\int_0^T\|\nabla\tilde{\mathbf{u}}\|_{\dot{B}_{p,1}^{\frac{d}{p}}}\|\tilde{c} \|_{\dot{B}_{2,1}^{\frac{d}{2}-1}}dt
+\int_0^T\|\tilde{c}\|_{\dot{B}_{2,1}^{\frac{d}{2}-1}}\ \|{\rm div}\tilde{\mathbf{u}}\|_{L^\infty}dt\\[5mm]
&\!\!\lesssim\|\tilde{c}_0\|^l_{\dot{B}_{2,1}^{\frac{d}{2}-1}}
+(\|\tilde{c}\|^l_{\tilde{L}_T^\infty(\dot{B}_{2,1}^{\frac{d}{2}-1})}+\|\tilde{c}\|^h_{\tilde{L}_T^\infty(\dot{B}_{2,1}^{\frac{d}{2}+1})})
(\|\tilde{\mathbf{u}}\|^l_{\tilde{L}_T^{1}\dot{B}_{p,1}^{\frac{d}{p}}}+\|\tilde{\mathbf{u}}\|^h_{\tilde{L}_T^{1}\dot{B}_{2,1}^{\frac{d}{2}+1}}).
\end{array}
\end{equation}
Combining (\ref{3.11}) and (\ref{3.15}), we have
\begin{equation}\label{3.16}
\begin{array}{ll}
\|\tilde{P}\|^l_{\tilde{L}_T^{\infty}(\dot{B}_{p,1}^{\frac{d}{p}-1})}
+\|\tilde{P}\|^l_{\tilde{L}_T^{1}(\dot{B}_{p,1}^{\frac{d}{p}+1})}
+\|\tilde{\mathbf{u}}\|^l_{\tilde{L}_T^{\infty}(\dot{B}_{p,1}^{\frac{d}{p}})}
+\|\tilde{\mathbf{u}}\|^l_{\tilde{L}_T^{1}(\dot{B}_{p,1}^{\frac{d}{p}})}\\[6mm]
+\|\tilde{c}\|^l_{\tilde{L}_T^\infty(\dot{B}_{2,1}^{\frac{d}{2}-1})}
\lesssim \mathcal{X}_{p,0}+\mathcal{X}_p^2(t).
\end{array}
\end{equation}

\subsection{The high-frequency estimate}
\quad\quad Applying $\dot\Delta_j$ to $(\ref{1.5})_1-(\ref{1.5})_3$, we have the following equations
\begin{equation}\label{3.17}
\left\{\begin{array}{ll}
(\dot\Delta_j\tilde{P})_t+\kappa_2{\rm div}\dot\Delta_j\tilde{\mathbf{u}}+\kappa_1\dot{S}_{j-1}h(\tilde{P},\tilde{c}){\rm div}\dot\Delta_j\tilde{\mathbf{u}}+\kappa_1\dot{S}_{j-1}\tilde{\mathbf{u}}\nabla\dot\Delta_j\tilde{P}\\[2mm]
\quad\quad\quad\quad=\kappa_1\dot{S}_{j-1}h(\tilde{P},\tilde{c}){\rm div}\dot\Delta_j\tilde{\mathbf{u}}-\kappa_1\dot\Delta_j(h(\tilde{P},\tilde{c}){\rm div}\tilde{\mathbf{u}})
+\kappa_1\dot{S}_{j-1}\tilde{\mathbf{u}}\nabla\dot\Delta_j\tilde{P}\\[2mm]
\quad\quad\quad\quad\quad-\kappa_1\dot\Delta_j(\tilde{\mathbf{u}}\cdot\nabla\tilde{P}):=R_j^1+R_j^2,\\[2mm]
(\dot\Delta_j\tilde{\mathbf{u}})_t+\kappa_2\nabla\dot\Delta_j\tilde{P}+\alpha\dot\Delta_j\tilde{\mathbf{u}}
+\kappa_1\dot{S}_{j-1}\tilde{\mathbf{u}}\cdot\nabla\dot\Delta_j\tilde{\mathbf{u}}+\frac{1}{\kappa_1}\dot{S}_{j-1}(\frac{1}{m}-\frac{1}{m_\infty})\nabla\dot\Delta_j\tilde{P}\\[2mm]
\quad\quad\quad\quad=\kappa_1\dot{S}_{j-1}\tilde{\mathbf{u}}\cdot\nabla\dot\Delta_j\tilde{\mathbf{u}}-\kappa_1\dot\Delta_j(\tilde{\mathbf{u}}\cdot\nabla\tilde{\mathbf{u}})
+\frac{1}{\kappa_1}\dot{S}_{j-1}(\frac{1}{m}-\frac{1}{m_\infty})\nabla\dot\Delta_j\tilde{P}\\[2mm]
\quad\quad\quad\quad\quad-\frac{1}{\kappa_1}\dot\Delta_j((\frac{1}{m}-\frac{1}{m_\infty})\nabla\tilde{P}):=R_j^3+R_j^4,\\[2mm]
(\dot\Delta_j\tilde{c})_t+\kappa_1\dot{S}_{j-1}\tilde{\mathbf{u}}\nabla\dot\Delta_j\tilde{c}=\kappa_1\dot{S}_{j-1}\tilde{\mathbf{u}}\nabla\dot\Delta_j\tilde{c}-\kappa_1\dot\Delta_j(\tilde{\mathbf{u}}\cdot\nabla\tilde{c}).
\end{array}\right.
\end{equation}

Multiplying $(\ref{3.17})_1-(\ref{3.17})_3$ by $(\kappa_2+\frac{1}{\kappa_1}\dot{S}_{j-1}(\frac{1}{m}-\frac{1}{m_\infty}))\dot\Delta_j\tilde{P},\ (\kappa_2+\kappa_1\dot{S}_{j-1}h(\tilde{P},\tilde{c}))\dot\Delta_j\tilde{\mathbf{u}},\ \dot\Delta_j\tilde{c}$, respectively, and integrating over $\mathbb{R}^d$, we obtain
\begin{align}
&\frac{1}{2}\frac{d}{dt}\int_{\mathbb{R}^d}(\kappa_2\!+\!\frac{1}{\kappa_1}\dot{S}_{j-1}(\frac{1}{m}\!-\!\frac{1}{m_\infty}))(\dot\Delta_j\tilde{P})^2dx
-\frac{1}{2}\int_{\mathbb{R}^d}\frac{d}{dt}(\kappa_2\!+\!\frac{1}{\kappa_1}\dot{S}_{j-1}(\frac{1}{m}\!-\!\frac{1}{m_\infty}))(\dot\Delta_j\tilde{P})^2dx\notag\\[3mm]
&+\langle(\kappa_2+\frac{1}{\kappa_1}\dot{S}_{j-1}(\frac{1}{m}-\frac{1}{m_\infty}))\dot\Delta_j\tilde{P},(\kappa_2+\kappa_1\dot{S}_{j-1}h(\tilde{P},\tilde{c})){\rm div}\dot\Delta_j\tilde{\mathbf{u}}\rangle\notag\\[3mm]
&+\langle(\kappa_2+\frac{1}{\kappa_1}\dot{S}_{j-1}(\frac{1}{m}-\frac{1}{m_\infty}))\dot\Delta_j\tilde{P},\kappa_1\dot{S}_{j-1}\tilde{\mathbf{u}}\nabla\dot\Delta_j\tilde{P}\rangle\notag\\[3mm]
=&\langle(\kappa_2+\frac{1}{\kappa_1}\dot{S}_{j-1}(\frac{1}{m}-\frac{1}{m_\infty}))\dot\Delta_j\tilde{P},R_j^1+R_j^2\rangle,\label{3.18}
\end{align}
\begin{align}
&\displaystyle\frac{1}{2}\frac{d}{dt}\int_{\mathbb{R}^d}(\kappa_2+\kappa_1\dot{S}_{j-1}h(\tilde{P},\tilde{c}))(\dot\Delta_j\tilde{\mathbf{u}})^2dx
-\frac{1}{2}\int_{\mathbb{R}^d}\frac{d}{dt}(\kappa_2+\kappa_1\dot{S}_{j-1}h(\tilde{P},\tilde{c}))(\dot\Delta_j\tilde{\mathbf{u}})^2dx\notag\\[3mm]
&\displaystyle+\langle(\kappa_2+\kappa_1\dot{S}_{j-1}h(\tilde{P},\tilde{c}))\dot\Delta_j\tilde{\mathbf{u}},
(\kappa_2+\frac{1}{\kappa_1}\dot{S}_{j-1}(\frac{1}{m}-\frac{1}{m_\infty}))\nabla\dot\Delta_j\tilde{P}\rangle\notag\\[3mm]
&\displaystyle+\langle(\kappa_2+\kappa_1\dot{S}_{j-1}h(\tilde{P},\tilde{c}))\dot\Delta_j\tilde{\mathbf{u}},\alpha\dot\Delta_j\tilde{\mathbf{u}}\rangle
+\langle(\kappa_2+\kappa_1\dot{S}_{j-1}h(\tilde{P},\tilde{c}))\dot\Delta_j\tilde{\mathbf{u}},\kappa_1\dot{S}_{j-1}\tilde{\mathbf{u}}\cdot\nabla\dot\Delta_j\tilde{\mathbf{u}}\rangle\notag\\[3mm]
\displaystyle=&\langle (\kappa_2+\kappa_1\dot{S}_{j-1}h(\tilde{P},\tilde{c}))\dot\Delta_j\tilde{\mathbf{u}},R_j^3+R_j^4\rangle,\label{3.19}
\end{align}
and
\begin{equation}\label{3.20}
\begin{array}{ll}
\displaystyle\frac{1}{2}\frac{d}{dt}\int_{\mathbb{R}^d}(\dot\Delta_j\tilde{c})^2dx
+\langle\kappa_1\dot{S}_{j-1}\tilde{\mathbf{u}}\dot\nabla\Delta_j\tilde{c},\dot\Delta_j\tilde{c}\rangle
=\langle \dot\Delta_j\tilde{c},R_j^5\rangle.
\end{array}
\end{equation}
Summing up (\ref{3.18}), (\ref{3.19}) and(\ref{3.20}) and using integration by parts, one has
\begin{align}
&\frac{1}{2}\frac{d}{dt}\int_{\mathbb{R}^d}(\kappa_2\!+\!\frac{1}{\kappa_1}\dot{S}_{j-1}(\frac{1}{m}\!-\!\frac{1}{m_\infty}))(\dot\Delta_j\tilde{P})^2
                                            +(\kappa_2+\kappa_1\dot{S}_{j-1}h(\tilde{P},\tilde{c}))(\dot\Delta_j\tilde{\mathbf{u}})^2+(\dot\Delta_j\tilde{c})^2dx\notag\\[3mm]
&+\alpha\langle(\kappa_2+\kappa_1\dot{S}_{j-1}h(\tilde{P},\tilde{c}))\dot\Delta_j\tilde{\mathbf{u}},\dot\Delta_j\tilde{\mathbf{u}}\rangle\notag\\[3mm]
=&\frac{1}{2}\int_{\mathbb{R}^d}\frac{d}{dt}(\kappa_2\!+\!\frac{1}{\kappa_1}\dot{S}_{j-1}(\frac{1}{m}\!-\!\frac{1}{m_\infty}))(\dot\Delta_j\tilde{P})^2dx
+\frac{1}{2}\int_{\mathbb{R}^d}\frac{d}{dt}(\kappa_2+\kappa_1\dot{S}_{j-1}h(\tilde{P},\tilde{c}))(\dot\Delta_j\tilde{\mathbf{u}})^2dx\notag\\[3mm]
&+\langle\nabla\big((\kappa_2+\kappa_1\dot{S}_{j-1}h(\tilde{P},\tilde{c}))(\kappa_2+\frac{1}{\kappa_1}\dot{S}_{j-1}(\frac{1}{m}-\frac{1}{m_\infty}))\big),\dot\Delta_j\tilde{\mathbf{u}}\dot\Delta_j\tilde{P}\rangle\notag\\[3mm]
&+\frac{1}{2}\langle{\rm div}\big(\kappa_1(\kappa_2+\frac{1}{\kappa_1}\dot{S}_{j-1}(\frac{1}{m}-\frac{1}{m_\infty}))\dot{S}_{j-1}\tilde{\mathbf{u}}\big),(\Delta_j\tilde{P})^2\rangle\notag\\[3mm]
&+\frac{1}{2}\langle{\rm div}\big(\kappa_1(\kappa_2+\kappa_1\dot{S}_{j-1}h(\tilde{P},\tilde{c}))\dot{S}_{j-1}\tilde{\mathbf{u}}\big),(\dot\Delta_j\tilde{\mathbf{u}})^2\rangle+\frac{\kappa_1}{2}\langle{\rm div}\dot{S}_{j-1}\tilde{\mathbf{u}},(\dot\Delta_j\tilde{c})^2\rangle+\langle \dot\Delta_j\tilde{c},R_j^5\rangle
\notag\\[3mm]
&+\langle(\kappa_2+\frac{1}{\kappa_1}\dot{S}_{j-1}(\frac{1}{m}-\frac{1}{m_\infty}))\dot\Delta_j\tilde{P},R_j^1+R_j^2\rangle+\langle (\kappa_2+\kappa_1\dot{S}_{j-1}h(\tilde{P},\tilde{c}))\dot\Delta_j\tilde{\mathbf{u}},R_j^3+R_j^4\rangle.\label{3.21}
\end{align}
By using H\"{o}lder inequality, we get
\begin{align}
&\frac{1}{2}\frac{d}{dt}\int_{\mathbb{R}^d}(\kappa_2\!+\!\frac{1}{\kappa_1}\dot{S}_{j-1}(\frac{1}{m}\!-\!\frac{1}{m_\infty}))(\dot\Delta_j\tilde{P})^2
                                            +(\kappa_2+\kappa_1\dot{S}_{j-1}h(\tilde{P},\tilde{c}))(\dot\Delta_j\tilde{\mathbf{u}})^2+(\dot\Delta_j\tilde{c})^2dx\notag\\[3mm]
&+\alpha\langle(\kappa_2+\kappa_1\dot{S}_{j-1}h(\tilde{P},\tilde{c}))\dot\Delta_j\tilde{\mathbf{u}},\dot\Delta_j\tilde{\mathbf{u}}\rangle\notag\\[3mm]
\lesssim&(\big\|\partial_t(\kappa_2\!+\!\frac{1}{\kappa_1}\dot{S}_{j-1}(\frac{1}{m}\!-\!\frac{1}{m_\infty}))\|_{L^\infty}
+\|\partial_t(\kappa_2+\kappa_1\dot{S}_{j-1}h(\tilde{P},\tilde{c}))\|_{L^\infty}\notag\\[3mm]
&+\|{\rm div}\big(\kappa_1(\kappa_2\!+\!\frac{1}{\kappa_1}\dot{S}_{j-1}(\frac{1}{m}\!-\!\frac{1}{m_\infty}))\dot{S}_{j-1}\tilde{\mathbf{u}}\big)\|_{L^\infty}
+\|{\rm div}\big(\kappa_1(\kappa_2+\kappa_1\dot{S}_{j-1}h(\tilde{P},\tilde{c}))\dot{S}_{j-1}\tilde{\mathbf{u}}\big)\|_{L^\infty}\notag\\[3mm]
&+\|\nabla\big((\kappa_2+\kappa_1\dot{S}_{j-1}h(\tilde{P},\tilde{c}))(\kappa_2+\frac{1}{\kappa_1}\dot{S}_{j-1}(\frac{1}{m}-\frac{1}{m_\infty}))\big)\|_{L^\infty}\big)\|(\dot\Delta_j\tilde{\mathbf{u}},\dot\Delta_j\tilde{P})\|_{L^2}^2\notag\\[3mm]
&+\|{\rm div}\dot{S}_{j-1}\tilde{\mathbf{u}}\|_{L^\infty}\|\dot\Delta_j\tilde{c}\|_{L^2}^2+\| \dot\Delta_j\tilde{c}\|_{L^2}\|R_j^5\|_{L^2}\notag\\[3mm]
&+\|(\kappa_2+\frac{1}{\kappa_1}\dot{S}_{j-1}(\frac{1}{m}-\frac{1}{m_\infty}))\|_{L^\infty}\|\dot\Delta_j\tilde{P}\|_{L^2}\|R_j^1+R_j^2\|_{L^2}\notag\\[3mm]
&+\| (\kappa_2+\kappa_1\dot{S}_{j-1}h(\tilde{P},\tilde{c}))\|_{L^\infty}\|\dot\Delta_j\tilde{\mathbf{u}}\|_{L^2}\|R_j^3+R_j^4\|_{L^2}.\label{3.22}
\end{align}
From $(\ref{3.17})_1$ and $(\ref{3.17})_2$, we obtain
\begin{align}
\displaystyle\frac{d}{dt}\int_{\mathbb{R}^d}\nabla\dot\Delta_j\tilde{P}&\dot\Delta_j\tilde{\mathbf{u}}dx=-\kappa_2\langle\nabla{\rm div}\dot\Delta_j\tilde{\mathbf{u}},\Delta_j\tilde{\mathbf{u}}\rangle-\kappa_1\langle\nabla(\dot{S}_{j-1}h(\tilde{P},\tilde{c}){\rm div}\dot\Delta_j\tilde{\mathbf{u}}),\dot\Delta_j\tilde{\mathbf{u}}\rangle\notag\\[3mm]
&\displaystyle-\kappa_1\langle\nabla(\dot{S}_{j-1}\tilde{\mathbf{u}}\cdot\nabla\dot\Delta_j\tilde{P}),\dot\Delta_j\tilde{\mathbf{u}}\rangle
+\langle\nabla R_j^1,\dot\Delta_j\tilde{\mathbf{u}}\rangle+\langle\nabla R_j^2,\dot\Delta_j\tilde{\mathbf{u}}\rangle\notag\\[3mm]
&-\kappa_2\langle\nabla\dot\Delta_j\tilde{P},\nabla\dot\Delta_j\tilde{P}\rangle-\alpha\langle\nabla\dot\Delta_j\tilde{P},\dot\Delta_j\tilde{\mathbf{u}}\rangle
-\kappa_1\langle\nabla\dot\Delta_j\tilde{P},\dot{S}_{j-1}\tilde{\mathbf{u}}\cdot\nabla\dot\Delta_j\tilde{\mathbf{u}}\rangle\notag\\[3mm]
&-\frac{1}{\kappa_1}\langle\nabla\dot\Delta_j\tilde{P},\dot{S}_{j-1}(\frac{1}{m}-\frac{1}{m_\infty})\nabla\dot\Delta_j\tilde{P}\rangle
+\langle\nabla\dot\Delta_j\tilde{P},R_j^3\rangle+\langle\nabla\dot\Delta_j\tilde{P},R_j^4\rangle,\label{3.23}
\end{align}
thus,
\begin{align}
&\displaystyle\frac{d}{dt}\int_{\mathbb{R}^d}\nabla\dot\Delta_j\tilde{P}\dot\Delta_j\tilde{\mathbf{u}}dx
+\langle(\kappa_2+\frac{1}{\kappa_1}\dot{S}_{j-1}(\frac{1}{m}-\frac{1}{m_\infty})),(\nabla\dot\Delta_j\tilde{P})^2\rangle\notag\\[3mm]
&-\langle(\kappa_2+\kappa_1\dot{S}_{j-1}h(\tilde{P},\tilde{c})){\rm div}\dot\Delta_j\tilde{\mathbf{u}},{\rm div}\dot\Delta_j\tilde{\mathbf{u}}\rangle
+\alpha\langle\nabla\dot\Delta_j\tilde{P},\dot\Delta_j\tilde{\mathbf{u}}\rangle\notag\\[3mm]
&-\kappa_1\langle\dot{S}_{j-1}\tilde{\mathbf{u}}\cdot\nabla\dot\Delta_j\tilde{P},{\rm div}\dot\Delta_j\tilde{\mathbf{u}}\rangle
+\kappa_1\langle\nabla\dot\Delta_j\tilde{P},\dot{S}_{j-1}\tilde{\mathbf{u}}\cdot\nabla\dot\Delta_j\tilde{\mathbf{u}}\rangle\notag\\[3mm]
=&
\langle\nabla R_j^1,\dot\Delta_j\tilde{\mathbf{u}}\rangle+\langle\nabla R_j^2,\dot\Delta_j\tilde{\mathbf{u}}\rangle
+\langle\nabla\dot\Delta_j\tilde{P},R_j^3\rangle+\langle\nabla\dot\Delta_j\tilde{P},R_j^4\rangle\notag\\[3mm]
\lesssim&\|R_j^1\|_{L^2}\|{\rm div}\dot\Delta_j\tilde{\mathbf{u}}\|_{L^2}
+\|R_j^2\|_{L^2}\|{\rm div}\dot\Delta_j\tilde{\mathbf{u}}\|_{L^2}
+\|\nabla\dot\Delta_j\tilde{P}\|_{L^2}\|R_j^3\|_{L^2}+\|\nabla\dot\Delta_j\tilde{P}\|_{L^2}\|R_j^4\|_{L^2}.\label{3.24}
\end{align}
Adding $(\ref{3.21})$ and $\beta_12^{-2j}\cdot(\ref{3.24})$, we have
\begin{align}
&\displaystyle\frac{1}{2}\frac{d}{dt}\int_{\mathbb{R}^d}\big((\kappa_2\!+\!\frac{1}{\kappa_1}\dot{S}_{j-1}(\frac{1}{m}\!-\!\frac{1}{m_\infty}))(\dot\Delta_j\tilde{P})^2
                                            +(\kappa_2+\kappa_1\dot{S}_{j-1}h(\tilde{P},\tilde{c}))(\dot\Delta_j\tilde{\mathbf{u}})^2+(\dot\Delta_j\tilde{c})^2\notag\\[3mm]
                                            &\displaystyle\quad+2\cdot\beta_12^{-2j}\nabla\dot\Delta_j\tilde{P}\dot\Delta_j\tilde{\mathbf{u}}\big)dx
+\alpha\langle(\kappa_2+\kappa_1\dot{S}_{j-1}h(\tilde{P},\tilde{c}))\dot\Delta_j\tilde{\mathbf{u}},\dot\Delta_j\tilde{\mathbf{u}}\rangle\notag\\[3mm]
&\quad\displaystyle+\beta_12^{-2j}\langle(\kappa_2+\frac{1}{\kappa_1}\dot{S}_{j-1}(\frac{1}{m}-\frac{1}{m_\infty})),(\nabla\dot\Delta_j\tilde{P})^2\rangle\notag\\[3mm]
&\quad-\beta_12^{-2j}\langle(\kappa_2+\kappa_1\dot{S}_{j-1}h(\tilde{P},\tilde{c})){\rm div}\dot\Delta_j\tilde{\mathbf{u}},{\rm div}\dot\Delta_j\tilde{\mathbf{u}}\rangle
+\beta_12^{-2j}\alpha\langle\nabla\dot\Delta_j\tilde{P},\dot\Delta_j\tilde{\mathbf{u}}\rangle\notag\\[3mm]
&\quad-\beta_12^{-2j}\kappa_1\langle\dot{S}_{j-1}\tilde{\mathbf{u}}\cdot\nabla\dot\Delta_j\tilde{P},{\rm div}\dot\Delta_j\tilde{\mathbf{u}}\rangle
+\beta_12^{-2j}\kappa_1\langle\nabla\dot\Delta_j\tilde{P},\dot{S}_{j-1}\tilde{\mathbf{u}}\cdot\nabla\dot\Delta_j\tilde{\mathbf{u}}\rangle\notag\\[3mm]
\displaystyle\lesssim&(\big\|\partial_t(\kappa_2\!+\!\frac{1}{\kappa_1}\dot{S}_{j-1}(\frac{1}{m}\!-\!\frac{1}{m_\infty}))\|_{L^\infty}
+\|\partial_t(\kappa_2+\kappa_1\dot{S}_{j-1}h(\tilde{P},\tilde{c}))\|_{L^\infty}\notag\\[3mm]
&\displaystyle+\|{\rm div}\big(\kappa_1(\kappa_2\!+\!\frac{1}{\kappa_1}\dot{S}_{j-1}(\frac{1}{m}\!-\!\frac{1}{m_\infty}))\dot{S}_{j-1}\tilde{\mathbf{u}}\big)\|_{L^\infty}
+\|{\rm div}\big(\kappa_1(\kappa_2\!+\!\kappa_1\dot{S}_{j-1}h(\tilde{P},\tilde{c}))\dot{S}_{j-1}\tilde{\mathbf{u}}\big)\|_{L^\infty}\notag\\[3mm]
&\displaystyle+\|\nabla\big((\kappa_2+\kappa_1\dot{S}_{j-1}h(\tilde{P},\tilde{c}))(\kappa_2+\frac{1}{\kappa_1}\dot{S}_{j-1}(\frac{1}{m}-\frac{1}{m_\infty}))\big)\|_{L^\infty}\big)\|(\dot\Delta_j\tilde{\mathbf{u}},\dot\Delta_j\tilde{P})\|_{L^2}^2\notag\\[3mm]
&\displaystyle+\|{\rm div}\dot{S}_{j-1}\tilde{\mathbf{u}}\|_{L^\infty}\|\dot\Delta_j\tilde{c}\|_{L^2}^2+\| \dot\Delta_j\tilde{c}\|_{L^2}\|R_j^5\|_{L^2}\notag\\[3mm]
&\displaystyle+\|(\kappa_2+\frac{1}{\kappa_1}\dot{S}_{j-1}(\frac{1}{m}-\frac{1}{m_\infty}))\|_{L^\infty}\|\dot\Delta_j\tilde{P}\|_{L^2}\|R_j^1+R_j^2\|_{L^2}\notag\\[3mm]
&+\| (\kappa_2+\kappa_1\dot{S}_{j-1}h(\tilde{P},\tilde{c}))\|_{L^\infty}\|\dot\Delta_j\tilde{\mathbf{u}}\|_{L^2}\|R_j^3+R_j^4\|_{L^2}
+\beta_12^{-2j}\|R_j^1\|_{L^2}\|{\rm div}\dot\Delta_j\tilde{\mathbf{u}}\|_{L^2}\notag\\[3mm]
&+\beta_12^{-2j}\|R_j^2\|_{L^2}\|{\rm div}\dot\Delta_j\tilde{\mathbf{u}}\|_{L^2}
+\beta_12^{-2j}\|\nabla\dot\Delta_j\tilde{P}\|_{L^2}\|R_j^3\|_{L^2}+\beta_12^{-2j}\|\nabla\dot\Delta_j\tilde{P}\|_{L^2}\|R_j^4\|_{L^2}.\label{3.25}
\end{align}
By virtue of Bernstein inequality, Young inequality and taking $\beta_1\ll1$, we deduce
\begin{align}
\displaystyle\quad\int_{\mathbb{R}^d}\Big((\kappa_2\!&+\!\frac{1}{\kappa_1}\dot{S}_{j-1}(\frac{1}{m}\!-\!\frac{1}{m_\infty}))(\dot\Delta_j\tilde{P})^2
                                            +(\kappa_2+\kappa_1\dot{S}_{j-1}h(\tilde{P},\tilde{c}))(\dot\Delta_j\tilde{\mathbf{u}})^2+(\dot\Delta_j\tilde{c})^2\notag\\[3mm]
                                            \displaystyle&+2\cdot\beta_12^{-2j}\nabla\dot\Delta_j\tilde{P}\dot\Delta_j\tilde{\mathbf{u}}\Big)\sim\|\dot\Delta_j(\tilde{P},\tilde{\mathbf{u}},\tilde{c})\|^2_{L^2},\label{3.26}
\end{align}
and
\begin{align}
&\displaystyle\alpha\langle(\kappa_2+\kappa_1\dot{S}_{j-1}h(\tilde{P},\tilde{c}))\dot\Delta_j\tilde{\mathbf{u}},\dot\Delta_j\tilde{\mathbf{u}}\rangle
+\beta_12^{-2j}\langle(\kappa_2+\frac{1}{\kappa_1}\dot{S}_{j-1}(\frac{1}{m}-\frac{1}{m_\infty})),(\nabla\dot\Delta_j\tilde{P})^2\rangle\notag\\[3mm]
&\displaystyle-\beta_12^{-2j}\langle(\kappa_2+\kappa_1\dot{S}_{j-1}h(\tilde{P},\tilde{c})){\rm div}\dot\Delta_j\tilde{\mathbf{u}},{\rm div}\dot\Delta_j\tilde{\mathbf{u}}\rangle
+\beta_12^{-2j}\alpha\langle\nabla\dot\Delta_j\tilde{P},\dot\Delta_j\tilde{\mathbf{u}}\rangle\notag\\[3mm]
&\displaystyle-\beta_12^{-2j}\kappa_1\langle\dot{S}_{j-1}\tilde{\mathbf{u}}\cdot\nabla\dot\Delta_j\tilde{P},{\rm div}\dot\Delta_j\tilde{\mathbf{u}}\rangle
+\beta_12^{-2j}\kappa_1\langle\nabla\dot\Delta_j\tilde{P},\dot{S}_{j-1}\tilde{\mathbf{u}}\cdot\nabla\dot\Delta_j\tilde{\mathbf{u}}\rangle\notag\\[3mm]
\sim&\displaystyle\|\dot\Delta_j(\tilde{P},\tilde{\mathbf{u}})\|^2_{L^2}.\label{3.27}
\end{align}
Then, by using $\|\dot{S}_{j-1}(\frac{1}{m}-\frac{1}{m_\infty}))\|_{L^\infty}\lesssim\|(\tilde{P},\tilde{c})\|_{L^\infty}\ll1$ and $\|\dot{S}_{j-1}h(\tilde{P},\tilde{c})\|_{L^\infty}\lesssim\|(\tilde{P},\tilde{c})\|_{L^\infty}\ll1$, (\ref{3.25}) can be rewritten as
\begin{align}
&\displaystyle\quad\frac{d}{dt}\|\dot\Delta_j(\tilde{P},\tilde{\mathbf{u}},\tilde{c})\|_{L^2}^2+\|\dot\Delta_j(\tilde{P},\tilde{\mathbf{u}})\|_{L^2}^2\notag\\[3mm]
\displaystyle\lesssim&\Big(\|\partial_t(\kappa_2\!+\!\frac{1}{\kappa_1}\dot{S}_{j-1}(\frac{1}{m}\!-\!\frac{1}{m_\infty}))\|_{L^\infty}
+\|\partial_t(\kappa_2+\kappa_1\dot{S}_{j-1}h(\tilde{P},\tilde{c}))\|_{L^\infty}\notag\\[3mm]
&\displaystyle+\|{\rm div}\big(\kappa_1(\kappa_2\!+\!\frac{1}{\kappa_1}\dot{S}_{j-1}(\frac{1}{m}\!-\!\frac{1}{m_\infty}))\dot{S}_{j-1}\tilde{\mathbf{u}}\big)\|_{L^\infty}
+\|{\rm div}\big(\kappa_1(\kappa_2\!+\!\kappa_1\dot{S}_{j-1}h(\tilde{P},\tilde{c}))\dot{S}_{j-1}\tilde{\mathbf{u}}\big)\|_{L^\infty}\notag\\[3mm]
&\displaystyle+\|\nabla\big((\kappa_2+\kappa_1\dot{S}_{j-1}h(\tilde{P},\tilde{c}))(\kappa_2+\frac{1}{\kappa_1}\dot{S}_{j-1}(\frac{1}{m}-\frac{1}{m_\infty}))\big)\|_{L^\infty}\Big)\|(\dot\Delta_j\tilde{\mathbf{u}},\dot\Delta_j\tilde{P})\|_{L^2}^2\notag\\[3mm]
&\displaystyle+\|{\rm div}\dot{S}_{j-1}\tilde{\mathbf{u}}\|_{L^\infty}\|\dot\Delta_j\tilde{c}\|_{L^2}^2+\| \dot\Delta_j\tilde{c}\|_{L^2}\|R_j^5\|_{L^2}\notag\\[3mm]
&\displaystyle+\|\dot\Delta_j\tilde{P}\|_{L^2}\|R_j^1+R_j^2\|_{L^2}
+\|\dot\Delta_j\tilde{\mathbf{u}}\|_{L^2}\|R_j^3+R_j^4\|_{L^2}
+\beta_12^{-j}\|R_j^1\|_{L^2}\|\dot\Delta_j\tilde{\mathbf{u}}\|_{L^2}\notag\\[3mm]
&+\beta_12^{-j}\|R_j^2\|_{L^2}\|\dot\Delta_j\tilde{\mathbf{u}}\|_{L^2}
+\beta_12^{-j}\|\dot\Delta_j\tilde{P}\|_{L^2}\|R_j^3\|_{L^2}+\beta_12^{-j}\|\dot\Delta_j\tilde{P}\|_{L^2}\|R_j^4\|_{L^2}.\label{3.28}
\end{align}
Multiplying (\ref{3.28}) by $\frac{1}{\|\dot\Delta_j(\tilde{P},\tilde{\mathbf{u}},\tilde{c})\|_{L^2}}$ and integrating about $t$, we get
\begin{align}
&\displaystyle\quad\|\dot\Delta_j(\tilde{P},\tilde{\mathbf{u}},\tilde{c})\|_{L^2}+\|\dot\Delta_j(\tilde{P},\tilde{\mathbf{u}})\|_{L_t^1(L^2)}\notag\\[3mm]
\displaystyle\lesssim&\|\dot\Delta_j(\tilde{P}_0,\tilde{\mathbf{u}}_0,\tilde{c}_0)\|_{L^2}+\int_0^T\Big(\|\partial_t(\kappa_2\!+\!\frac{1}{\kappa_1}\dot{S}_{j-1}(\frac{1}{m}\!-\!\frac{1}{m_\infty}))\|_{L^\infty}
+\|\partial_t(\kappa_2+\kappa_1\dot{S}_{j-1}h(\tilde{P},\tilde{c}))\|_{L^\infty}\notag\\[3mm]
&\displaystyle+\|{\rm div}\big(\kappa_1(\kappa_2\!+\!\frac{1}{\kappa_1}\dot{S}_{j-1}(\frac{1}{m}\!-\!\frac{1}{m_\infty}))\dot{S}_{j-1}\tilde{\mathbf{u}}\big)\|_{L^\infty}
+\|{\rm div}\big(\kappa_1(\kappa_2\!+\!\kappa_1\dot{S}_{j-1}h(\tilde{P},\tilde{c}))\dot{S}_{j-1}\tilde{\mathbf{u}}\big)\|_{L^\infty}\notag\\[3mm]
&\displaystyle+\|\nabla\big((\kappa_2+\kappa_1\dot{S}_{j-1}h(\tilde{P},\tilde{c}))(\kappa_2+\frac{1}{\kappa_1}\dot{S}_{j-1}(\frac{1}{m}-\frac{1}{m_\infty}))\big)\|_{L^\infty}\Big)\|(\dot\Delta_j\tilde{\mathbf{u}},\dot\Delta_j\tilde{P})\|_{L^2}\ dt\notag\\[3mm]
&\displaystyle+\int_0^T\|{\rm div}\dot{S}_{j-1}\tilde{\mathbf{u}}\|_{L^\infty}\|\dot\Delta_j\tilde{c}\|_{L^2}dt+
\int_0^T\big(\|R_j^1\|_{L^2}+\|R_j^2\|_{L^2}+\|R_j^3\|_{L^2}+\|R_j^4\|_{L^2}+\|R_j^5\|_{L^2}\big)dt.\label{3.29}
\end{align}
Applying $2^{j(\frac{d}{2}+1)}$ to the both sides of (\ref{3.29}) and summing up with respect to $j\geq j_0$, we obtain
\begin{align}
&\displaystyle\|(\tilde{P},\tilde{\mathbf{u}},\tilde{c})\|^h_{\tilde{L}_t^\infty(\dot{B}_{2,1}^{\frac{d}{2}+1})}+\|(\tilde{P},\tilde{\mathbf{u}})\|^h_{\tilde{L}_t^1(\dot{B}_{2,1}^{\frac{d}{2}+1})}\notag\\[3mm]
\displaystyle\lesssim&\|(\tilde{P}_0,\tilde{\mathbf{u}}_0,\tilde{c}_0)\|^h_{\dot{B}_{2,1}^{\frac{d}{2}+1}}+\Big(\|\partial_t(\kappa_2\!+\!\frac{1}{\kappa_1}\dot{S}_{j-1}(\frac{1}{m}\!-\!\frac{1}{m_\infty}))\|_{\tilde{L}_t^\infty(L^\infty)}
+\|\partial_t(\kappa_2+\kappa_1\dot{S}_{j-1}h(\tilde{P},\tilde{c}))\|_{\tilde{L}_t^\infty(L^\infty)}\notag\\[3mm]
&\displaystyle+\|{\rm div}\big(\kappa_1(\kappa_2\!+\!\frac{1}{\kappa_1}\dot{S}_{j-1}(\frac{1}{m}\!-\!\frac{1}{m_\infty}))\dot{S}_{j-1}\tilde{\mathbf{u}}\big)\|_{\tilde{L}_t^\infty(L^\infty)}\notag\\[3mm]
&+\|{\rm div}\big(\kappa_1(\kappa_2\!+\!\kappa_1\dot{S}_{j-1}h(\tilde{P},\tilde{c}))\dot{S}_{j-1}\tilde{\mathbf{u}}\big)\|_{\tilde{L}_t^\infty(L^\infty)}\notag\\[3mm]
&\displaystyle+\|\nabla\big((\kappa_2+\kappa_1\dot{S}_{j-1}h(\tilde{P},\tilde{c}))(\kappa_2+\frac{1}{\kappa_1}\dot{S}_{j-1}(\frac{1}{m}-\frac{1}{m_\infty}))\big)\|_{\tilde{L}_t^\infty(L^\infty)}\Big)\|
(\tilde{\mathbf{u}},\tilde{P})\|^h_{\tilde{L}_t^1(\dot{B}_{2,1}^{\frac{d}{2}+1})}\notag\\[3mm]
&\displaystyle+\|{\rm div}\dot{S}_{j-1}\tilde{\mathbf{u}}\|_{\tilde{L}_t^1(L^\infty)}\|\tilde{c}\|^h_{\tilde{L}_t^\infty(\dot{B}_{2,1}^{\frac{d}{2}+1})}\notag\\[3mm]
&\displaystyle+\int_0^T\sum_{j\geq j_0}2^{j(\frac{d}{2}+1)}\big(\|R_j^1\|_{L^2}+\|R_j^2\|_{L^2}+\|R_j^3\|_{L^2}+\|R_j^4\|_{L^2}+\|R_j^5\|_{L^2}\big)dt.\label{3.30}
\end{align}

Then, we bound the right hand side of (\ref{3.30}) term-by-term. By taking advantage of  $\eqref{1.5}_1$, $\eqref{1.5}_2$ and Lemma \ref{2 2.7}, we obtain
\begin{align}
&\|\partial_t(\kappa_2\!+\!\frac{1}{\kappa_1}\dot{S}_{j-1}(\frac{1}{m}\!-\!\frac{1}{m_\infty}))\|_{\tilde{L}_t^\infty(L^\infty)}\notag\\
\lesssim&\|\partial_t(\frac{1}{m}\!-\!\frac{1}{m_\infty})\|_{\tilde{L}_t^\infty(L^\infty)}\lesssim\|\partial_t(\tilde{P},\tilde{c})\|_{\tilde{L}_t^\infty(L^\infty)}\notag\\[3mm]
\lesssim&\|-\kappa_2{\rm div}\tilde{\mathbf{u}}-\kappa_1h(\tilde{P},\tilde{c}){\rm div}\tilde{\mathbf{u}}-\kappa_1\tilde{\mathbf{u}}\cdot\nabla\tilde{P}-\kappa_1\tilde{\mathbf{u}}\cdot\nabla\tilde{c}\|_{\tilde{L}_t^\infty(\dot{B}_{p,1}^{\frac{d}{p}})}\notag\\[3mm]
\lesssim&\|\tilde{\mathbf{u}}\|_{\tilde{L}_t^\infty(\dot{B}_{p,1}^{\frac{d}{p}+1})}+\|h(\tilde{P},\tilde{c}){\rm div}\tilde{\mathbf{u}}\|_{\tilde{L}_t^\infty(\dot{B}_{p,1}^{\frac{d}{p}})}+\|\tilde{\mathbf{u}}\cdot\nabla\tilde{P}\|_{\tilde{L}_t^\infty(\dot{B}_{p,1}^{\frac{d}{p}})}
+\|\tilde{\mathbf{u}}\cdot\nabla\tilde{c}\|_{\tilde{L}_t^\infty(\dot{B}_{p,1}^{\frac{d}{p}})}\notag\\[3mm]
\lesssim&\|\tilde{\mathbf{u}}\|_{\tilde{L}_t^\infty(\dot{B}_{p,1}^{\frac{d}{p}+1})}+\|(\tilde{P},\tilde{c})\|_{\tilde{L}_t^\infty(\dot{B}_{p,1}^{\frac{d}{p}})}\|\tilde{\mathbf{u}}\|_{\tilde{L}_t^\infty(\dot{B}_{p,1}^{\frac{d}{p}+1})}
+\|\tilde{\mathbf{u}}\|_{\tilde{L}_t^\infty(\dot{B}_{p,1}^{\frac{d}{p}})}\|\tilde{P}\|_{\tilde{L}_t^\infty(\dot{B}_{p,1}^{\frac{d}{p}+1})}\notag\\
&+\|\tilde{\mathbf{u}}\|_{\tilde{L}_t^\infty(\dot{B}_{p,1}^{\frac{d}{p}})}\|\tilde{c}\|_{\tilde{L}_t^\infty(\dot{B}_{p,1}^{\frac{d}{p}+1})}\notag\\[3mm]
\lesssim&(\|\tilde{\mathbf{u}}\|^l_{\tilde{L}_t^\infty(\dot{B}_{p,1}^{\frac{d}{p}})}+\|\tilde{\mathbf{u}}\|^h_{\tilde{L}_t^\infty(\dot{B}_{2,1}^{\frac{d}{2}+1})})
+(\|(\tilde{P},\tilde{c})\|^l_{\tilde{L}_t^\infty(\dot{B}_{p,1}^{\frac{d}{p}})}+\|(\tilde{P},\tilde{c})\|^h_{\tilde{L}_t^\infty(\dot{B}_{2,1}^{\frac{d}{2}+1})})\notag\\
&(\|\tilde{\mathbf{u}}\|^l_{\tilde{L}_t^\infty(\dot{B}_{p,1}^{\frac{d}{p}})}+\|\tilde{\mathbf{u}}\|^h_{\tilde{L}_t^\infty(\dot{B}_{2,1}^{\frac{d}{2}+1})})\lesssim \mathcal{X}_p^2(t),\label{3.31}
\end{align}
similarly, we have
\begin{align}
&\|\partial_t(\kappa_2+\kappa_1\dot{S}_{j-1}h(\tilde{P},\tilde{c}))\|_{\tilde{L}_t^\infty(L^\infty)}\notag\\[3mm]
\lesssim&(\|\tilde{\mathbf{u}}\|^l_{\tilde{L}_t^\infty(\dot{B}_{p,1}^{\frac{d}{p}})}+\|\tilde{\mathbf{u}}\|^h_{\tilde{L}_t^\infty(\dot{B}_{2,1}^{\frac{d}{2}+1})})
+(\|(\tilde{P},\tilde{c})\|^l_{\tilde{L}_t^\infty(\dot{B}_{p,1}^{\frac{d}{p}})}+\|(\tilde{P},\tilde{c})\|^h_{\tilde{L}_t^\infty(\dot{B}_{2,1}^{\frac{d}{2}+1})})\notag\\
&(\|\tilde{\mathbf{u}}\|^l_{\tilde{L}_t^\infty(\dot{B}_{p,1}^{\frac{d}{p}})}+\|\tilde{\mathbf{u}}\|^h_{\tilde{L}_t^\infty(\dot{B}_{2,1}^{\frac{d}{2}+1})})\lesssim \mathcal{X}_p^2(t).\label{3.32}
\end{align}
Then, we have from Lemma \ref{2 2.7} that
\begin{align}
&\|{\rm div}\big(\kappa_1(\kappa_2\!+\!\frac{1}{\kappa_1}\dot{S}_{j-1}(\frac{1}{m}\!-\!\frac{1}{m_\infty}))\dot{S}_{j-1}\tilde{\mathbf{u}}\big)\|_{\tilde{L}_t^\infty(L^\infty)}\notag\\[3mm]
\lesssim&\|\nabla\big(\kappa_1(\kappa_2\!+\!\frac{1}{\kappa_1}\dot{S}_{j-1}(\frac{1}{m}\!-\!\frac{1}{m_\infty}))\big)\dot{S}_{j-1}\tilde{\mathbf{u}}\|_{\tilde{L}_t^\infty(L^\infty)}
+\|\kappa_1(\kappa_2\!+\!\frac{1}{\kappa_1}\dot{S}_{j-1}(\frac{1}{m}\!-\!\frac{1}{m_\infty})){\rm div}\dot{S}_{j-1}\tilde{\mathbf{u}}\|_{\tilde{L}_t^\infty(L^\infty)}\notag\\[3mm]
\lesssim&\|\nabla(\frac{1}{m}\!-\!\frac{1}{m_\infty})\|_{\tilde{L}_t^\infty(\dot{B}_{p,1}^{\frac{d}{p}})}\|\tilde{\mathbf{u}}\|_{\tilde{L}_t^\infty(\dot{B}_{p,1}^{\frac{d}{p}})}
+\|{\rm div}\tilde{\mathbf{u}}\|_{\tilde{L}_t^\infty(\dot{B}_{p,1}^{\frac{d}{p}})}+\|(\frac{1}{m}\!-\!\frac{1}{m_\infty})\|_{\tilde{L}_t^\infty(\dot{B}_{p,1}^{\frac{d}{p}})}\|{\rm div}\tilde{\mathbf{u}}\|_{\tilde{L}_t^\infty(\dot{B}_{p,1}^{\frac{d}{p}})}\notag\\[3mm]
\lesssim&(\|\tilde{\mathbf{u}}\|^l_{\tilde{L}_t^\infty(\dot{B}_{p,1}^{\frac{d}{p}})}+\|\tilde{\mathbf{u}}\|^h_{\tilde{L}_t^\infty(\dot{B}_{2,1}^{\frac{d}{2}+1})})
+(\|(\tilde{P},\tilde{c})\|^l_{\tilde{L}_t^\infty(\dot{B}_{p,1}^{\frac{d}{p}})}+\|(\tilde{P},\tilde{c})\|^h_{\tilde{L}_t^\infty(\dot{B}_{2,1}^{\frac{d}{2}+1})})\notag\\
&(\|\tilde{\mathbf{u}}\|^l_{\tilde{L}_t^\infty(\dot{B}_{p,1}^{\frac{d}{p}})}+\|\tilde{\mathbf{u}}\|^h_{\tilde{L}_t^\infty(\dot{B}_{2,1}^{\frac{d}{2}+1})})\lesssim \mathcal{X}_p^2(t).\label{3.33}
\end{align}
Similar to (\ref{3.33}), we acquire
\begin{align}
&\|{\rm div}\big(\kappa_1(\kappa_2\!+\!\kappa_1\dot{S}_{j-1}h(\tilde{P},\tilde{c}))\dot{S}_{j-1}\tilde{\mathbf{u}}\big)\|_{\tilde{L}_t^\infty(L^\infty)}\notag\\[2mm]
\lesssim&(\|\tilde{\mathbf{u}}\|^l_{\tilde{L}_t^\infty(\dot{B}_{p,1}^{\frac{d}{p}})}\!+\!\|\tilde{\mathbf{u}}\|^h_{\tilde{L}_t^\infty(\dot{B}_{2,1}^{\frac{d}{2}+1})})
\!+\!(\|(\tilde{P},\tilde{c})\|^l_{\tilde{L}_t^\infty(\dot{B}_{p,1}^{\frac{d}{p}})}\!+\!\|(\tilde{P},\tilde{c})\|^h_{\tilde{L}_t^\infty(\dot{B}_{2,1}^{\frac{d}{2}+1})})(\|\tilde{\mathbf{u}}\|^l_{\tilde{L}_t^\infty(\dot{B}_{p,1}^{\frac{d}{p}})}+\|\tilde{\mathbf{u}}\|^h_{\tilde{L}_t^\infty(\dot{B}_{2,1}^{\frac{d}{2}+1})})\notag\\[2mm] \lesssim&\mathcal{X}_p ^2(t),\notag\\[2mm]
&\|\nabla\big((\kappa_2+\kappa_1\dot{S}_{j-1}h(\tilde{P},\tilde{c}))(\kappa_2+\frac{1}{\kappa_1}\dot{S}_{j-1}(\frac{1}{m}-\frac{1}{m_\infty}))\big)\|_{\tilde{L}_t^\infty(L^\infty)}\notag\\[2mm]
\lesssim&(\|(\tilde{P},\tilde{c})\|^l_{\tilde{L}_t^\infty(\dot{B}_{p,1}^{\frac{d}{p}})}+\|(\tilde{P},\tilde{c})\|^h_{\tilde{L}_t^\infty(\dot{B}_{2,1}^{\frac{d}{2}+1})})
+(\|(\tilde{P},\tilde{c})\|^l_{\tilde{L}_t^\infty(\dot{B}_{p,1}^{\frac{d}{p}})}+\|(\tilde{P},\tilde{c})\|^h_{\tilde{L}_t^\infty(\dot{B}_{2,1}^{\frac{d}{2}+1})})^2\lesssim \mathcal{X}_p^2(t).\label{3.35}
\end{align}
By letting $k=0,\ \sigma_1=\frac{d}{p}+2,\ \sigma_2=\frac{d}{p}$ in Lemma \ref{2 2.3}, it holds that
\begin{align}
&\displaystyle\int_0^T\!\!\sum_{j\geq j_0}2^{j(\frac{d}{2}+1)}\|R_j^1\|_{L^2}
\lesssim \int_0^T\!\!(\|\nabla h(\tilde{P},\tilde{c})\|_{\dot{B}_{p,1}^{\frac{d}{p}}}\|{\rm div}\tilde{\mathbf{u}}\|^h_{\dot{B}_{2,1}^{\frac{d}{2}}}
+\|{\rm div}\tilde{\mathbf{u}}\|^l_{\dot{B}_{p,1}^{\frac{d}{p}-\frac{d}{p^*}}}\|h(\tilde{P},\tilde{c})\|^l_{\dot{B}_{p,1}^{\frac{d}{p}+2}}\notag\\[2mm]
&\displaystyle+\|{\rm div}\tilde{\mathbf{u}}\|_{\dot{B}_{p,1}^{\frac{d}{p}}}\|h(\tilde{P},\tilde{c})\|^h_{\dot{B}_{2,1}^{\frac{d}{2}+1}}
+\|{\rm div}\tilde{\mathbf{u}}\|^l_{\dot{B}_{p,1}^{\frac{d}{p}}}\|\nabla h(\tilde{P},\tilde{c})\|^l_{\dot{B}_{p,1}^{\frac{d}{p}-\frac{d}{p^*}}})\notag\\[2mm]
\displaystyle\lesssim& \int_0^T(\|h(\tilde{P},\tilde{c})\|_{\dot{B}_{p,1}^{\frac{d}{p}+1}}\|{\rm div}\tilde{\mathbf{u}}\|^h_{\dot{B}_{2,1}^{\frac{d}{2}}}
+2^{j_0(1-\frac{d}{p^*})}\|{\rm div}\tilde{\mathbf{u}}\|^l_{\dot{B}_{p,1}^{\frac{d}{p}-1}}\|h(\tilde{P},\tilde{c})\|^l_{\dot{B}_{p,1}^{\frac{d}{p}+2}}\notag\\[2mm]
&\displaystyle+\|{\rm div}\tilde{\mathbf{u}}\|_{\dot{B}_{p,1}^{\frac{d}{p}}}\|h(\tilde{P},\tilde{c})\|^h_{\dot{B}_{2,1}^{\frac{d}{2}+1}}
+2^{j_0(1-\frac{d}{p^*})}\|{\rm div}\tilde{\mathbf{u}}\|^l_{\dot{B}_{p,1}^{\frac{d}{p}}}\|\nabla h(\tilde{P},\tilde{c})\|^l_{\dot{B}_{p,1}^{\frac{d}{p}-1}}).\label{3.36}
\end{align}
Then $\|u\|^l_{\dot{B}_{p,1}^{s_1}}\lesssim 2^{j_0s}\|u\|^l_{\dot{B}_{p,1}^{s_1-s}},\ \|u\|^h_{\dot{B}_{p,1}^{s_1}}\lesssim 2^{-j_0s}\|u\|^h_{\dot{B}_{p,1}^{s_1+s}}\ (s>0,\ j\geq j_0)$  and Lemma \ref{2 2.4} give
\begin{equation}\label{3.37}
\begin{array}{ll}
\|h(\tilde{P},\tilde{c})\|^l_{\dot{B}_{p,1}^{\frac{d}{p}+2}}
\lesssim\|h(\tilde{P},\tilde{c})\|^l_{\dot{B}_{p,1}^{\frac{d}{p}+1}}
\lesssim\|h(\tilde{P},\tilde{c})\|_{\dot{B}_{p,1}^{\frac{d}{p}+1}}
\lesssim\|(\tilde{P},\tilde{c})\|^l_{\dot{B}_{p,1}^{\frac{d}{p}}}+\|(\tilde{P},\tilde{c})\|^h_{\dot{B}_{2,1}^{\frac{d}{2}+1}},
\end{array}
\end{equation}
\begin{equation}\label{3.38}
\begin{array}{ll}
\|\nabla h(\tilde{P},\tilde{c})\|^l_{\dot{B}_{p,1}^{\frac{d}{p}-1}}
\lesssim\|h(\tilde{P},\tilde{c})\|^l_{\dot{B}_{p,1}^{\frac{d}{p}}}
\lesssim\|h(\tilde{P},\tilde{c})\|_{\dot{B}_{p,1}^{\frac{d}{p}}}
\lesssim\|(\tilde{P},\tilde{c})\|^l_{\dot{B}_{p,1}^{\frac{d}{p}}}+\|(\tilde{P},\tilde{c})\|^h_{\dot{B}_{2,1}^{\frac{d}{2}+1}},
\end{array}
\end{equation}
\begin{equation}\label{3.39}
\begin{array}{ll}
\|h(\tilde{P},\tilde{c})\|^h_{\dot{B}_{2,1}^{\frac{d}{2}+1}}\leq C(1+\|(\tilde{P},\tilde{c})\|^l_{\dot{B}_{p,1}^{\frac{d}{p}}}+2^{-j_0}\|(\tilde{P},\tilde{c})\|^h_{\dot{B}_{2,1}^{\frac{d}{2}+1}})(\|(\tilde{P},\tilde{c})\|^l_{\dot{B}_{p,1}^{\frac{d}{p}+1}}
+\|(\tilde{P},\tilde{c})\|^h_{\dot{B}_{2,1}^{\frac{d}{2}+1}}).
\end{array}
\end{equation}
Inserting (\ref{3.37})-(\ref{3.39}) into (\ref{3.36}), we get
\begin{align}
\displaystyle\int_0^T\!\!\sum_{j\geq j_0}&2^{j(\frac{d}{2}+1)}\|R_j^1\|_{L^2}
\displaystyle\lesssim \int_0^T\!\!(\|\tilde{\mathbf{u}}\|^h_{\dot{B}_{2,1}^{\frac{d}{2}+1}}
+\|\tilde{\mathbf{u}}\|^l_{\dot{B}_{p,1}^{\frac{d}{p}}})(\|(\tilde{P},\tilde{c})\|^l_{\dot{B}_{p,1}^{\frac{d}{p}}}+\|(\tilde{P},\tilde{c})\|^h_{\dot{B}_{2,1}^{\frac{d}{2}+1}})
\notag\\[2mm]
&\displaystyle+(\|\tilde{\mathbf{u}}\|^l_{\dot{B}_{p,1}^{\frac{d}{p}}}
\!\!+\|\tilde{\mathbf{u}}\|^h_{\dot{B}_{2,1}^{\frac{d}{2}+1}})(1\!+\!\|(\tilde{P},\tilde{c})\|^l_{\dot{B}_{p,1}^{\frac{d}{p}}}
\!+2^{-j_0}\|(\tilde{P},\tilde{c})\|^h_{\dot{B}_{2,1}^{\frac{d}{2}+1}})
(\|(\tilde{P},\tilde{c})\|^l_{\dot{B}_{p,1}^{\frac{d}{p}+1}}
\!\!+\|(\tilde{P},\tilde{c})\|^h_{\dot{B}_{2,1}^{\frac{d}{2}+1}})\notag\\[2mm]
\lesssim&(\|\tilde{\mathbf{u}}\|^h_{\tilde{L}_t^1(\dot{B}_{2,1}^{\frac{d}{2}+1})}+\|\tilde{\mathbf{u}}\|^l_{\tilde{L}_t^1(\dot{B}_{p,1}^{\frac{d}{p}})})
(\|(\tilde{P},\tilde{c})\|^l_{\tilde{L}_t^\infty(\dot{B}_{p,1}^{\frac{d}{p}})}+\|(\tilde{P},\tilde{c})\|^h_{\tilde{L}_t^\infty(\dot{B}_{2,1}^{\frac{d}{2}+1})})
\notag\\[2mm]
&+(\|\tilde{\mathbf{u}}\|^l_{\tilde{L}_t^1(\dot{B}_{p,1}^{\frac{d}{p}})}\!\!+\|\tilde{\mathbf{u}}\|^h_{\tilde{L}_t^1(\dot{B}_{2,1}^{\frac{d}{2}+1})})
(\|(\tilde{P},\tilde{c})\|^l_{\tilde{L}_t^\infty(\dot{B}_{p,1}^{\frac{d}{p}})}+\|(\tilde{P},\tilde{c})\|^h_{\tilde{L}_t^\infty(\dot{B}_{2,1}^{\frac{d}{2}+1})})^2\notag\\
\lesssim& \mathcal{X}_p^2(t)+\mathcal{X}_p^3(t).\label{3.40}
\end{align}
By taking $k=0,\ \sigma_1=\frac{d}{p}+2,\ \sigma_2=\frac{d}{p}$ in Lemma \ref{2 2.3} again, we have
\begin{align}
\displaystyle\int_0^T\sum_{j\geq j_0}2^{j(\frac{d}{2}+1)}\|R_j^2\|_{L^2}
\displaystyle\lesssim&\int_0^T(\|\nabla \tilde{\mathbf{u}}\|_{\dot{B}_{p,1}^{\frac{d}{p}}}\|\nabla\tilde{P}\|^h_{\dot{B}_{2,1}^{\frac{d}{2}}}
+\|\nabla\tilde{P}\|^l_{\dot{B}_{p,1}^{\frac{d}{p}-\frac{d}{p^*}}}\|\tilde{\mathbf{u}}\|^l_{\dot{B}_{p,1}^{\frac{d}{p}+2}}\notag\\[2mm]
&\displaystyle+\|\nabla\tilde{P}\|_{\dot{B}_{p,1}^{\frac{d}{p}}}\|\tilde{\mathbf{u}}\|^h_{\dot{B}_{2,1}^{\frac{d}{2}+1}}
+\|\nabla\tilde{P}\|^l_{\dot{B}_{p,1}^{\frac{d}{p}}}\|\nabla \tilde{\mathbf{u}}\|^l_{\dot{B}_{p,1}^{\frac{d}{p}-\frac{d}{p^*}}})\notag\\[2mm]
\displaystyle\lesssim&\int_0^T(\|\nabla \tilde{\mathbf{u}}\|_{\dot{B}_{p,1}^{\frac{d}{p}}}\|\nabla\tilde{P}\|^h_{\dot{B}_{2,1}^{\frac{d}{2}}}
+2^{j_0(1-\frac{d}{p^*})}\|\nabla\tilde{P}\|^l_{\dot{B}_{p,1}^{\frac{d}{p}-1}}\|\tilde{\mathbf{u}}\|^l_{\dot{B}_{p,1}^{\frac{d}{p}+2}}\notag\\[2mm]
&\displaystyle+\|\nabla\tilde{P}\|_{\dot{B}_{p,1}^{\frac{d}{p}}}\|\tilde{\mathbf{u}}\|^h_{\dot{B}_{2,1}^{\frac{d}{2}+1}}
+2^{j_0(1-\frac{d}{p^*})}\|\nabla\tilde{P}\|^l_{\dot{B}_{p,1}^{\frac{d}{p}}}\|\nabla \tilde{\mathbf{u}}\|^l_{\dot{B}_{p,1}^{\frac{d}{p}-1}})\notag\\[2mm]
\displaystyle\lesssim&\int_0^T(\|\tilde{\mathbf{u}}\|^l_{\dot{B}_{p,1}^{\frac{d}{p}+1}}+\|\tilde{\mathbf{u}}\|^h_{\dot{B}_{2,1}^{\frac{d}{2}+1}})
(\|\tilde{P}\|^l_{\dot{B}_{p,1}^{\frac{d}{p}+1}}+\|\nabla\tilde{P}\|^h_{\dot{B}_{2,1}^{\frac{d}{2}}})
+\|\tilde{P}\|^l_{\dot{B}_{p,1}^{\frac{d}{p}}}\|\tilde{\mathbf{u}}\|^l_{\dot{B}_{p,1}^{\frac{d}{p}}}\notag\\[2mm]
\displaystyle\lesssim&(\|\tilde{\mathbf{u}}\|^l_{\tilde{L}_t^1(\dot{B}_{p,1}^{\frac{d}{p}+1})}+\|\tilde{\mathbf{u}}\|^h_{\tilde{L}_t^1(\dot{B}_{2,1}^{\frac{d}{2}+1})})
(\|\tilde{P}\|^l_{\tilde{L}_t^\infty(\dot{B}_{p,1}^{\frac{d}{p}+1})}+\|\nabla\tilde{P}\|^h_{\tilde{L}_t^\infty(\dot{B}_{2,1}^{\frac{d}{2}})})\notag\\
&\!+\!\|\tilde{P}\|^l_{\tilde{L}_t^\infty(\dot{B}_{p,1}^{\frac{d}{p}})}\|\tilde{\mathbf{u}}\|^l_{\tilde{L}_t^1(\dot{B}_{p,1}^{\frac{d}{p}})}\notag\\[2mm]
\lesssim& \mathcal{X}_p^2(t),\label{3.41}
\end{align}
and
\begin{align}
\displaystyle\int_0^T\sum_{j\geq j_0}2^{j(\frac{d}{2}+1)}\|R_j^3\|_{L^2}
\lesssim& \int_0^T(\|\nabla \tilde{\mathbf{u}}\|_{\dot{B}_{p,1}^{\frac{d}{p}}}\|\nabla\tilde{\mathbf{u}}\|^h_{\dot{B}_{2,1}^{\frac{d}{2}}}
+\|\nabla\tilde{\mathbf{u}}\|^l_{\dot{B}_{p,1}^{\frac{d}{p}-\frac{d}{p^*}}}\|\tilde{\mathbf{u}}\|^l_{\dot{B}_{p,1}^{\frac{d}{p}+2}}\notag\\[2mm]
&\ \ \ \ \ +\|\nabla\tilde{\mathbf{u}}\|_{\dot{B}_{p,1}^{\frac{d}{p}}}\|\tilde{\mathbf{u}}\|^h_{\dot{B}_{2,1}^{\frac{d}{2}+1}}
+\|\nabla\tilde{\mathbf{u}}\|^l_{\dot{B}_{p,1}^{\frac{d}{p}}}\|\nabla \tilde{\mathbf{u}}\|^l_{\dot{B}_{p,1}^{\frac{d}{p}-\frac{d}{p^*}}})\notag\\[2mm]
\displaystyle\lesssim&\int_0^T\|\nabla\tilde{\mathbf{u}}\|_{\dot{B}_{p,1}^{\frac{d}{p}}}\|\tilde{\mathbf{u}}\|^h_{\dot{B}_{2,1}^{\frac{d}{2}+1}}
+2^{j_0(1-\frac{d}{p^*})}\|\nabla\tilde{\mathbf{u}}\|^l_{\dot{B}_{p,1}^{\frac{d}{p}-1}}(\|\tilde{\mathbf{u}}\|^l_{\dot{B}_{p,1}^{\frac{d}{p}+2}}
+\|\nabla\tilde{\mathbf{u}}\|^l_{\dot{B}_{p,1}^{\frac{d}{p}}})\notag\\[2mm]
\displaystyle\lesssim&(\|\tilde{\mathbf{u}}\|^l_{\tilde{L}_t^1(\dot{B}_{p,1}^{\frac{d}{p}+1})}+\|\tilde{\mathbf{u}}\|^h_{\tilde{L}_t^1(\dot{B}_{2,1}^{\frac{d}{2}+1})})
\|\tilde{\mathbf{u}}\|^h_{\tilde{L}_t^\infty(\dot{B}_{2,1}^{\frac{d}{2}+1})}
+\|\tilde{\mathbf{u}}\|^l_{\tilde{L}_t^\infty(\dot{B}_{p,1}^{\frac{d}{p}})}\|\tilde{\mathbf{u}}\|^l_{\tilde{L}_t^1(\dot{B}_{p,1}^{\frac{d}{p}+1})}\notag\\
\lesssim &\mathcal{X}_p^2(t).\label{3.42}
\end{align}
Similar to $R_j^1$, we have
\begin{align}
&\displaystyle\int_0^T\sum_{j\geq j_0}2^{j(\frac{d}{2}+1)}\|R_j^4\|_{L^2}\notag\\[3mm]
\lesssim& \int_0^T(\|\nabla (\frac{1}{m}-\frac{1}{m_\infty})\|_{\dot{B}_{p,1}^{\frac{d}{p}}}\|\nabla\tilde{P}\|^h_{\dot{B}_{2,1}^{\frac{d}{2}}}
+\|\nabla\tilde{P}\|^l_{\dot{B}_{p,1}^{\frac{d}{p}-\frac{d}{p^*}}}\|(\frac{1}{m}-\frac{1}{m_\infty})\|^l_{\dot{B}_{p,1}^{\sigma_1}}\notag\\[3mm]
&\ \ \ \ +\|\nabla\tilde{P}\|_{\dot{B}_{p,1}^{\frac{d}{p}}}\|(\frac{1}{m}-\frac{1}{m_\infty})\|^h_{\dot{B}_{2,1}^{\frac{d}{2}+1}}
+\|\nabla\tilde{P}\|^l_{\dot{B}_{p,1}^{\sigma_2}}\|\nabla (\frac{1}{m}-\frac{1}{m_\infty})\|^l_{\dot{B}_{p,1}^{\frac{d}{p}-\frac{d}{p^*}}})\notag\\[3mm]
\displaystyle\lesssim& \mathcal{X}_p^2(t)+\mathcal{X}_p^3(t)+\int_0^T\|\nabla\tilde{P}\|^l_{\dot{B}_{p,1}^{\frac{d}{p}-\frac{d}{p^*}}}\|(\frac{1}{m}-\frac{1}{m_\infty})\|^l_{\dot{B}_{p,1}^{\sigma_1}}.\label{3.43}
\end{align}
Letting $M(\tilde{P},\tilde{c}):=\frac{1}{m}-\frac{1}{m_\infty}$ and noticing that $M(0,0)=0$,  we may write that $M(\tilde{P},\tilde{c})=M'_{\tilde{P}}(0)\tilde{P}+M'_{\tilde{c}}(0)\tilde{c}+\tilde{M}(\tilde{P},\tilde{c})\tilde{P}\tilde{c}$ and get
\begin{align}
&\int_0^T\|\nabla\tilde{P}\|^l_{\dot{B}_{p,1}^{\frac{d}{p}-\frac{d}{p^*}}}\|(\frac{1}{m}-\frac{1}{m_\infty})\|^l_{\dot{B}_{p,1}^{\sigma_1}}\notag\\[3mm]
\lesssim&\int_0^T\|\nabla\tilde{P}\|^l_{\dot{B}_{p,1}^{\frac{d}{p}-\frac{d}{p^*}}}
\|M'_{\tilde{P}}(0)\tilde{P}+M'_{\tilde{c}}(0)\tilde{c}+\tilde{M}(\tilde{P},\tilde{c})\tilde{P}\tilde{c}\|^l_{\dot{B}_{p,1}^{\sigma_1}}\notag\\[3mm]
\lesssim&\int_0^T\|\nabla\tilde{P}\|^l_{\dot{B}_{p,1}^{\frac{d}{p}-\frac{d}{p^*}}}
(\|M'_{\tilde{P}}(0)\tilde{P}\|^l_{\dot{B}_{p,1}^{\sigma_1}}+\|M'_{\tilde{c}}(0)\tilde{c}\|^l_{\dot{B}_{p,1}^{\sigma_1}}
+\|\tilde{M}(\tilde{P},\tilde{c})\tilde{P}\tilde{c}\|^l_{\dot{B}_{p,1}^{\sigma_1}})\notag\\[3mm]
\lesssim&\int_0^T\|\nabla\tilde{P}\|^l_{\dot{B}_{p,1}^{\frac{d}{p}-\frac{d}{p^*}}}
\|\tilde{P}\|^l_{\dot{B}_{p,1}^{\sigma_1}}+\|\nabla\tilde{P}\|^l_{\dot{B}_{p,1}^{\frac{d}{p}-\frac{d}{p^*}}}\|\tilde{c}\|^l_{\dot{B}_{p,1}^{\sigma_1}}
+\|\nabla\tilde{P}\|^l_{\dot{B}_{p,1}^{\frac{d}{p}-\frac{d}{p^*}}}\|\tilde{M}(\tilde{P},\tilde{c})\tilde{P}\tilde{c}\|^l_{\dot{B}_{p,1}^{\sigma_1}}.\label{3.44}
\end{align}
Then, we bounded the three terms on the right hand side of \eqref{3.44}, respectively. From $\|u\|^l_{\dot{B}_{p,1}^{s_1}}\lesssim 2^{j_0s}\|u\|^l_{\dot{B}_{p,1}^{s_1-s}}\ (1-\frac{d}{p^\ast}>0)$, the first term can be estimated as following:
\begin{align}
\int_0^T\|\nabla\tilde{P}\|^l_{\dot{B}_{p,1}^{\frac{d}{p}-\frac{d}{p^*}}}\|\tilde{P}\|^l_{\dot{B}_{p,1}^{\sigma_1}}
&\lesssim\int_0^T 2^{j_0(1-\frac{d}{p^\ast})}\|\nabla\tilde{P}\|^l_{\dot{B}_{p,1}^{\frac{d}{p}-1}}\|\tilde{P}\|^l_{\dot{B}_{p,1}^{\frac{d}{p}+1}}\notag\\
&\lesssim\|\nabla\tilde{P}\|^l_{\tilde{L}_t^\infty(\dot{B}_{p,1}^{\frac{d}{p}-1})}\|\tilde{P}\|^l_{\tilde{L}_t^1(\dot{B}_{p,1}^{\frac{d}{p}+1})}
\lesssim\mathcal{X}_p^2(t).\label{3.441}
\end{align}
Let $p^*=\infty\ (p=2)$ and $\sigma_1=\frac{d}{2}+1$, we acquire
\begin{align}
\int_0^T\|\nabla\tilde{P}\|^l_{\dot{B}_{p,1}^{\frac{d}{p}-\frac{d}{p^*}}}\|\tilde{c}\|^l_{\dot{B}_{p,1}^{\sigma_1}}
&\lesssim\int_0^T\|\nabla\tilde{P}\|^l_{\dot{B}_{p,1}^{\frac{d}{p}}}\|\tilde{c}\|^l_{\dot{B}_{2,1}^{\frac{d}{2}+1}}
\lesssim\|\nabla\tilde{P}\|^l_{\tilde{L}_t^1(\dot{B}_{p,1}^{\frac{d}{p}})}\|\tilde{c}\|^l_{\tilde{L}_t^\infty(\dot{B}_{2,1}^{\frac{d}{2}+1})}
\lesssim\mathcal{X}_p^2(t).\label{3.442}
\end{align}
From Lemma \ref{2 2.7}, we have
\begin{align}
&\int_0^T\|\nabla\tilde{P}\|^l_{\dot{B}_{p,1}^{\frac{d}{p}-\frac{d}{p^*}}}\|\tilde{M}(\tilde{P},\tilde{c})\tilde{P}\tilde{c}\|^l_{\dot{B}_{p,1}^{\frac{d}{p}+1}}\notag\\
\lesssim&\int_0^T\|\nabla\tilde{P}\|^l_{\dot{B}_{p,1}^{\frac{d}{p}-\frac{d}{p^*}}}
(\|\tilde{M}(\tilde{P},\tilde{c})\tilde{P}\|_{\dot{B}_{p,1}^{\frac{d}{p}+1}}\|\tilde{c}\|_{L^\infty}
+\|\tilde{c}\|_{\dot{B}_{p,1}^{\frac{d}{p}+1}}\|\tilde{M}(\tilde{P},\tilde{c})\tilde{P}\|_{L^\infty})\notag\\[3mm]
\lesssim&\int_0^T\|\nabla\tilde{P}\|^l_{\dot{B}_{p,1}^{\frac{d}{p}-\frac{d}{p^*}}}[
(\|\tilde{M}(\tilde{P},\tilde{c})\|_{L^\infty}\|\tilde{P}\|_{\dot{B}_{p,1}^{\frac{d}{p}+1}}+\|\tilde{M}(\tilde{P},\tilde{c})\|_{\dot{B}_{p,1}^{\frac{d}{p}+1}}\|\tilde{P}\|_{L^\infty})\|\tilde{c}\|_{L^\infty}
\notag\\[3mm]
&\quad\ \ \ \ +\|\tilde{c}\|_{\dot{B}_{p,1}^{\frac{d}{p}+1}}\|\tilde{M}(\tilde{P},\tilde{c})\|_{L^\infty}\|\tilde{P}\|_{L^\infty}]\notag\\[3mm]
\lesssim&\|\nabla\tilde{P}\|^l_{\tilde{L}_t^\infty(\dot{B}_{p,1}^{\frac{d}{p}-\frac{d}{p^*}})}\|\tilde{M}(\tilde{P},\tilde{c})\|_{\tilde{L}_t^\infty(L^\infty)}\|\tilde{P}\|_{\tilde{L}_t^1(\dot{B}_{p,1}^{\frac{d}{p}+1})}\|\tilde{c}\|_{\tilde{L}_t^\infty(L^\infty)}\notag\\[3mm]
&+\|\nabla\tilde{P}\|^l_{\tilde{L}_t^2(\dot{B}_{p,1}^{\frac{d}{p}-\frac{d}{p^*}})}\|\tilde{M}(\tilde{P},\tilde{c})\|_{\tilde{L}_t^\infty(\dot{B}_{p,1}^{\frac{d}{p}+1})}\|\tilde{P}\|_{\tilde{L}_t^2(L^\infty)}\|\tilde{c}\|_{\tilde{L}_t^\infty(L^\infty)}\notag\\[3mm]
&+\|\nabla\tilde{P}\|^l_{\tilde{L}_t^2(\dot{B}_{p,1}^{\frac{d}{p}-\frac{d}{p^*}})}\|\tilde{c}\|_{\tilde{L}_t^\infty(\dot{B}_{p,1}^{\frac{d}{p}+1})}\|\tilde{M}(\tilde{P},\tilde{c})\|_{\tilde{L}_t^\infty(L^\infty)}\|\tilde{P}\|_{\tilde{L}_t^2(L^\infty)}.\label{3.45}
\end{align}
By using interpolation inequalities, we have the following inequalities:
\begin{align}
\|\tilde{P}\|^l_{\tilde{L}_t^2(\dot{B}_{p,1}^{\frac{d}{p}})}\leq\big(\|\tilde{P}\|^l_{\tilde{L}_t^\infty(\dot{B}_{p,1}^{\frac{d}{p}-1})}\big)^{\frac{1}{2}}
\big(\|\tilde{P}\|^l_{\tilde{L}_t^1(\dot{B}_{p,1}^{\frac{d}{p}+1})}\big)^{\frac{1}{2}},\notag\\[3mm]
\|\tilde{P}\|^h_{\tilde{L}_t^2(\dot{B}_{2,1}^{\frac{d}{2}+1})}\leq\big(\|\tilde{P}\|^h_{\tilde{L}_t^\infty(\dot{B}_{2,1}^{\frac{d}{2}-1})}\big)^{\frac{1}{2}}
\big(\|\tilde{P}\|^h_{\tilde{L}_t^1(\dot{B}_{2,1}^{\frac{d}{2}+1})}\big)^{\frac{1}{2}}.\label{3.46}
\end{align}
Inserting (\ref{3.46}) into (\ref{3.45}) and using Lemma \ref{A.13}, we obtain
\begin{align}
&\int_0^T\|\nabla\tilde{P}\|^l_{\dot{B}_{p,1}^{\frac{d}{p}-\frac{d}{p^*}}}\|\tilde{M}(\tilde{P},\tilde{c})\tilde{P}\tilde{c}\|^l_{\dot{B}_{p,1}^{\frac{d}{p}+1}}\notag\\
\lesssim&\|\nabla\tilde{P}\|^l_{\tilde{L}_t^\infty(\dot{B}_{p,1}^{\frac{d}{p}-1})}\|(\tilde{P},\tilde{c})\|_{\tilde{L}_t^\infty(\dot{B}_{p,1}^{\frac{d}{p}})}\|\tilde{P}\|_{\tilde{L}_t^1(\dot{B}_{p,1}^{\frac{d}{p}+1})}\|\tilde{c}\|_{\tilde{L}_t^1(\dot{B}_{p,1}^{\frac{d}{p}})}\notag\\[3mm]
&+\|\tilde{P}\|^l_{\tilde{L}_t^2(\dot{B}_{p,1}^{\frac{d}{p}})}(\|\tilde{P}\|^l_{\tilde{L}_t^2(\dot{B}_{p,1}^{\frac{d}{p}})}+\|\tilde{P}\|^h_{\tilde{L}_t^2(\dot{B}_{2,1}^{\frac{d}{2}+1})})
\|(\tilde{P},\tilde{c})\|_{\tilde{L}_t^\infty(\dot{B}_{p,1}^{\frac{d}{p}+1})}\|\tilde{c}\|_{\tilde{L}_t^\infty(\dot{B}_{p,1}^{\frac{d}{p}})}\notag\\[3mm]
&+\|\tilde{P}\|^l_{\tilde{L}_t^2(\dot{B}_{p,1}^{\frac{d}{p}})}(\|\tilde{P}\|^l_{\tilde{L}_t^2(\dot{B}_{p,1}^{\frac{d}{p}})}+\|\tilde{P}\|^h_{\tilde{L}_t^2(\dot{B}_{2,1}^{\frac{d}{2}+1})})
\|\tilde{c}\|_{\tilde{L}_t^\infty(\dot{B}_{p,1}^{\frac{d}{p}+1})}\|(\tilde{P},\tilde{c})\|_{\tilde{L}_t^\infty(\dot{B}_{p,1}^{\frac{d}{p}})}\notag\\[3mm]
\lesssim&\|\nabla\tilde{P}\|^l_{\tilde{L}_t^\infty(\dot{B}_{p,1}^{\frac{d}{p}-1})}\|(\tilde{P},\tilde{c})\|_{\tilde{L}_t^\infty(\dot{B}_{p,1}^{\frac{d}{p}})}\|\tilde{P}\|_{\tilde{L}_t^1(\dot{B}_{p,1}^{\frac{d}{p}+1})}\|\tilde{c}\|_{\tilde{L}_t^1(\dot{B}_{p,1}^{\frac{d}{p}})}\notag\\[3mm]
&+(\|\tilde{P}\|^l_{\tilde{L}_t^\infty(\dot{B}_{p,1}^{\frac{d}{p}-1})}+\|\tilde{P}\|^h_{\tilde{L}_t^\infty(\dot{B}_{2,1}^{\frac{d}{2}-1})})
 (\|\tilde{P}\|^l_{\tilde{L}_t^1(\dot{B}_{p,1}^{\frac{d}{p}+1})}+\|\tilde{P}\|^h_{\tilde{L}_t^1(\dot{B}_{2,1}^{\frac{d}{2}+1})})
 \|(\tilde{P},\tilde{c})\|_{\tilde{L}_t^\infty(\dot{B}_{p,1}^{\frac{d}{p}+1})}\|\tilde{c}\|_{\tilde{L}_t^\infty(\dot{B}_{p,1}^{\frac{d}{p}})}\notag\\[3mm]
&+(\|\tilde{P}\|^l_{\tilde{L}_t^\infty(\dot{B}_{p,1}^{\frac{d}{p}-1})}+\|\tilde{P}\|^h_{\tilde{L}_t^\infty(\dot{B}_{2,1}^{\frac{d}{2}-1})})
 (\|\tilde{P}\|^l_{\tilde{L}_t^1(\dot{B}_{p,1}^{\frac{d}{p}+1})}+\|\tilde{P}\|^h_{\tilde{L}_t^1(\dot{B}_{2,1}^{\frac{d}{2}+1})})
 \|\tilde{c}\|_{\tilde{L}_t^\infty(\dot{B}_{p,1}^{\frac{d}{p}+1})}\|(\tilde{P},\tilde{c})\|_{\tilde{L}_t^\infty(\dot{B}_{p,1}^{\frac{d}{p}})}\notag\\[5mm]
\lesssim &\mathcal{X}_p^4(t).\label{3.47}
\end{align}
Thus, combining (\ref{3.44}), (\ref{3.441}), (\ref{3.442}) and (\ref{3.47}), one has
\begin{align}
\int_0^T\|\nabla\tilde{P}\|^l_{\dot{B}_{p,1}^{\frac{d}{p}-\frac{d}{p^*}}}\|(\frac{1}{m}-\frac{1}{m_\infty})\|^l_{\dot{B}_{p,1}^{\sigma_1}}
\lesssim\mathcal{X}_p^2(t).\label{3.471}
\end{align}
Hence, from \eqref{3.43}, we easily deduce that
\begin{align}
\displaystyle\int_0^T\sum_{j\geq j_0}2^{j(\frac{d}{2}+1)}\|R_j^4\|_{L^2}
\displaystyle\lesssim \mathcal{X}_p^2(t)+\mathcal{X}_p^3(t)+\mathcal{X}_p^4(t).\label{3.472}
\end{align}
Then, we bound the last term of (\ref{3.30}) as
\begin{align}
&\displaystyle\int_0^T\sum_{j\geq j_0}2^{j(\frac{d}{2}+1)}\|R_j^5\|_{L^2}\notag\\[3mm]
\displaystyle\lesssim& \int_0^T(\|\nabla \tilde{\mathbf{u}}\|_{\dot{B}_{p,1}^{\frac{d}{p}}}\|\nabla\tilde{c}\|^h_{\dot{B}_{2,1}^{\frac{d}{2}}}
+\|\nabla\tilde{c}\|^l_{\dot{B}_{p,1}^{\frac{d}{p}-\frac{d}{p^*}}}\|\tilde{\mathbf{u}}\|^l_{\dot{B}_{p,1}^{\frac{d}{p}+2}}
+\|\nabla\tilde{c}\|_{\dot{B}_{p,1}^{\frac{d}{p}}}\|\tilde{\mathbf{u}}\|^h_{\dot{B}_{2,1}^{\frac{d}{2}+1}}
+\|\nabla\tilde{c}\|^l_{\dot{B}_{p,1}^{\frac{d}{p}}}\|\nabla \tilde{\mathbf{u}}\|^l_{\dot{B}_{p,1}^{\frac{d}{p}-\frac{d}{p^*}}})\notag\\[3mm]
\displaystyle\lesssim&(\|\nabla \tilde{\mathbf{u}}\|_{\tilde{L}_t^1(\dot{B}_{p,1}^{\frac{d}{p}})}\|\nabla\tilde{c}\|^h_{\tilde{L}_t^\infty(\dot{B}_{2,1}^{\frac{d}{2}})}
+\|\nabla\tilde{c}\|^l_{\tilde{L}_t^\infty(\dot{B}_{p,1}^{\frac{d}{p}-1})}\|\tilde{\mathbf{u}}\|^l_{\tilde{L}_t^1(\dot{B}_{p,1}^{\frac{d}{p}+2})}\notag\\[3mm]
&+\|\nabla\tilde{c}\|_{\tilde{L}_t^\infty(\dot{B}_{p,1}^{\frac{d}{p}})}\|\tilde{\mathbf{u}}\|^h_{\tilde{L}_t^1(\dot{B}_{2,1}^{\frac{d}{2}+1})}
+\|\nabla\tilde{c}\|^l_{\tilde{L}_t^\infty(\dot{B}_{p,1}^{\frac{d}{p}})}\|\nabla \tilde{\mathbf{u}}\|^l_{\tilde{L}_t^1(\dot{B}_{p,1}^{\frac{d}{p}-1})})\notag\\[3mm]
\displaystyle\lesssim&\|\nabla \tilde{\mathbf{u}}\|_{\tilde{L}_t^1(\dot{B}_{p,1}^{\frac{d}{p}})}\|\nabla\tilde{c}\|^h_{\tilde{L}_t^\infty(\dot{B}_{2,1}^{\frac{d}{2}})}
+\|\nabla\tilde{c}\|_{\tilde{L}_t^\infty(\dot{B}_{p,1}^{\frac{d}{p}})}\|\tilde{\mathbf{u}}\|^h_{\tilde{L}_t^1(\dot{B}_{2,1}^{\frac{d}{2}+1})}
+\|\tilde{c}\|^l_{\tilde{L}_t^\infty(\dot{B}_{p,1}^{\frac{d}{p}})}\|\tilde{\mathbf{u}}\|^l_{\tilde{L}_t^1(\dot{B}_{p,1}^{\frac{d}{p}})}
\lesssim \mathcal{X}_p^2(t).\label{3.48}
\end{align}
Therefore, we get
\begin{align}
&\displaystyle\|(\tilde{P},\tilde{\mathbf{u}},\tilde{c})\|^h_{\tilde{L}_t^\infty(\dot{B}_{2,1}^{\frac{d}{2}+1})}+\|(\tilde{P},\tilde{\mathbf{u}})\|^h_{\tilde{L}_t^1(\dot{B}_{2,1}^{\frac{d}{2}+1})}
\displaystyle\lesssim\mathcal{X}_{p,0}+\mathcal{X}_p^2(t)+\mathcal{X}_p^3(t)+\mathcal{X}_p^4(t).\label{3.49}
\end{align}
Summing up (\ref{3.16}) and (\ref{3.49}), we have
\begin{align}
&\|\tilde{P}\|^l_{\tilde{L}_T^{\infty}(\dot{B}_{p,1}^{\frac{d}{p}-1})}
+\|\tilde{P}\|^l_{\tilde{L}_T^{1}(\dot{B}_{p,1}^{\frac{d}{p}+1})}
+\|\tilde{\mathbf{u}}\|^l_{\tilde{L}_T^{\infty}(\dot{B}_{p,1}^{\frac{d}{p}})}
+\|\tilde{\mathbf{u}}\|^l_{\tilde{L}_T^{1}(\dot{B}_{p,1}^{\frac{d}{p}})}
+\|\tilde{c}\|^l_{\tilde{L}_T^\infty(\dot{B}_{2,1}^{\frac{d}{2}-1})}\notag\\[6mm]
&+\|(\tilde{P},\tilde{\mathbf{u}},\tilde{c})\|^h_{\tilde{L}_t^\infty(\dot{B}_{2,1}^{\frac{d}{2}+1})}+\|(\tilde{P},\tilde{\mathbf{u}})\|^h_{\tilde{L}_t^1(\dot{B}_{2,1}^{\frac{d}{2}+1})}
\lesssim \mathcal{X}_{p,0}+\mathcal{X}^2_p(t)+\mathcal{X}^3_p(t)++\mathcal{X}^4_p(t).
\end{align}
Thus, we have completed the proof of Proposition (\ref{3 1.1}).

\section{Proof of global existence and uniqueness}
\quad\quad Now, we want to focus on the existence and uniqueness of local-in-time solutions for system \ref{1.1}. It is essential to prove the local existence before proving the global one. First, define the space
\begin{align*}
	E(T)\triangleq&\big\{(\tilde{P},\tilde{\mathbf{u}},\tilde{c}) : \tilde{P}^l\in \mathcal{C}([0,T];\dot{B}^{\frac{d}{p}-1}_{p,1}), \tilde{\mathbf{u}}^l\in \mathcal{C}([0,T];\dot{B}^{\frac{d}{p}}_{p,1}), \tilde{c}^l\in \mathcal{C}([0,T];\dot{B}^{\frac{d}{2}-1}_{2,1}),\\
	&(\tilde{P}^h, \tilde{\mathbf{u}}^h, \tilde{c}^h)\in \mathcal{C}([0,T];\dot{B}^{\frac{d}{2}+1}_{2,1})\big\}
\end{align*}
and its norm
\begin{align*}
	\|(\tilde{P}, \tilde{\mathbf{u}}, \tilde{c})\|_{E(T)}\triangleq\|\tilde{P}\|^l_{L^\infty_t(\dot{B}^{\frac{d}{p}-1}_{p,1})}+\|\tilde{\mathbf{u}}\|^l_{L^\infty_t(\dot{B}^{\frac{d}{p}}_{p,1})}+\|\tilde{c}\|^l_{L^\infty_t(\dot{B}^{\frac{d}{2}-1}_{2,1})}
	+\|(\tilde{P},\tilde{\mathbf{u}},\tilde{c})\|^h_{\tilde{L}_t^\infty(\dot{B}_{2,1}^{\frac{d}{2}+1})}.
\end{align*}
The space of initial data is
\begin{align*}
	E_0\triangleq\big\{(\tilde{P}_0, \tilde{\mathbf{u}}_0, \tilde{c}_0) : \tilde{P}_0^l\in \dot{B}^{\frac{d}{p}-1}_{p,1}, \tilde{\mathbf{u}}_0^l\in \dot{B}^{\frac{d}{p}}_{p,1}, \tilde{c}_0^l\in \dot{B}^{\frac{d}{2}-1}_{2,1}, (\tilde{P}_0^h,\tilde{\mathbf{u}}_0^h, \tilde{c}_0^h)\in \dot{B}^{\frac{d}{2}+1}_{2,1}\big\}
\end{align*}
and associated norm is
$\|(\tilde{P}_0, \tilde{\mathbf{u}}_0, \tilde{c}_0)\|_{E_0}\triangleq\mathcal{X}_{p,0}$, which has been defined in Theorem \ref{1 1.1}.

\begin{theorem}(Local well-posedness)
	Let $p$ be in Theorem \ref{1 1.1} and assume $(\tilde{P}_0,\tilde{\mathbf{u}}_0,\tilde{c}_0)\in E_0$. Then, there exists a time $T_*>0$ such that the System \eqref{1.5} connected to the initial data $(\tilde{P}_0,\tilde{\mathbf{u}}_0,\tilde{c}_0)$ has a unique strong solution $(\tilde{P},\tilde{\mathbf{u}},\tilde{c})\in E(T_*)$.
\end{theorem}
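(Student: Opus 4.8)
\emph{Proof proposal.} I would construct the solution by the standard iteration scheme for quasilinear hyperbolic systems in critical hybrid Besov spaces (in the spirit of \cite{XuJ2,Crin-Barat1}); the point is that all the nonlinear estimates needed have already been carried out in Proposition \ref{3 1.1}, so only the time–integrability has to be traded for a smallness factor coming from a short time interval. \textbf{Step 1 (approximate solutions).} Set $(\tilde{P}^0,\tilde{\mathbf{u}}^0,\tilde{c}^0):=(0,\mathbf{0},0)$ and, given the $n$-th iterate, define $(\tilde{P}^{n+1},\tilde{\mathbf{u}}^{n+1},\tilde{c}^{n+1})$ as the solution of the linear problem obtained from \eqref{1.5} by freezing the transport velocity and the nonlinear coefficients at level $n$:
\begin{equation*}
\left\{\begin{array}{l}
\partial_t\tilde{P}^{n+1}+\kappa_2\nabla\cdot\tilde{\mathbf{u}}^{n+1}+\kappa_1 h(\tilde{P}^n,\tilde{c}^n)\nabla\cdot\tilde{\mathbf{u}}^{n+1}+\kappa_1\tilde{\mathbf{u}}^n\cdot\nabla\tilde{P}^{n+1}=0,\\[1.5mm]
\partial_t\tilde{\mathbf{u}}^{n+1}+\kappa_2\nabla\tilde{P}^{n+1}+\alpha\tilde{\mathbf{u}}^{n+1}+\kappa_1\tilde{\mathbf{u}}^n\cdot\nabla\tilde{\mathbf{u}}^{n+1}+\frac{1}{\kappa_1}\Big(\frac{1}{m(\tilde{P}^n,\tilde{c}^n)}-\frac{1}{m_\infty}\Big)\nabla\tilde{P}^{n+1}=0,\\[1.5mm]
\partial_t\tilde{c}^{n+1}+\kappa_1\tilde{\mathbf{u}}^n\cdot\nabla\tilde{c}^{n+1}=0,
\end{array}\right.
\end{equation*}
with initial data $(\tilde{P}_0,\tilde{\mathbf{u}}_0,\tilde{c}_0)$. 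The first two equations form a linear symmetrizable hyperbolic system with damping (the symmetrizer furnished by $P_m(m_\infty,0)>0$) whose coefficients are as regular as the $n$-th iterate, hence it is uniquely solvable at every Besov regularity occurring below (construct in $H^s$, then propagate the bounds of Lemmas \ref{2 2.1}--\ref{2 2.2} in low frequency and the energy method in high frequency); the third is a linear transport equation with Lipschitz drift, solved by the transport estimate behind Lemma \ref{2 2.6}. Thus $(\tilde{P}^{n+1},\tilde{\mathbf{u}}^{n+1},\tilde{c}^{n+1})\in E(T)$ for every $T>0$.

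\textbf{Step 2 (uniform bounds).} Put $R:=2\bar{C}\mathcal{X}_{p,0}$, $\bar C$ the constant in the linear estimates, and assume inductively $\|(\tilde{P}^n,\tilde{\mathbf{u}}^n,\tilde{c}^n)\|_{E(T)}\le R$. Estimating $(\tilde{P}^{n+1},\tilde{\mathbf{u}}^{n+1},\tilde{c}^{n+1})$ exactly as in Proposition \ref{3 1.1}: in low frequency the four perturbative terms $\kappa_1 h^n\nabla\cdot\tilde{\mathbf{u}}^{n+1}$, $\kappa_1\tilde{\mathbf{u}}^n\cdot\nabla\tilde{P}^{n+1}$, $\kappa_1\tilde{\mathbf{u}}^n\cdot\nabla\tilde{\mathbf{u}}^{n+1}$, $\frac{1}{\kappa_1}(\frac{1}{m^n}-\frac{1}{m_\infty})\nabla\tilde{P}^{n+1}$ are handled as source terms through the product/composition Lemmas \ref{2 2.4} and \ref{2 2.7} (splitting $h^n$ and $\frac{1}{m^n}-\frac{1}{m_\infty}$ into their Taylor parts as in \eqref{3.44}--\eqref{3.47}, where the embedding $\dot{B}^{\frac d2}_{2,1}\hookrightarrow\dot{B}^{\frac dp}_{p,1}$ ($p\ge2$) is what couples the $L^2$-type variable $\tilde{c}$ to the $L^p$-type variables), while in high frequency one runs the Littlewood--Paley paralinearization \eqref{3.17}--\eqref{3.30} with commutators $R^i_j$ now bilinear in levels $n$ and $n+1$. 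Since only $L^\infty_t$-norms on $[0,T]$ are sought, every nonlinear contribution is bounded by $\int_0^T(\cdots)\,dt\le \bar C\,T\,P(R)$ with $P$ a fixed polynomial, and the transport amplification by $\exp(\bar C\,T\,R)\le2$; hence $\|(\tilde{P}^{n+1},\tilde{\mathbf{u}}^{n+1},\tilde{c}^{n+1})\|_{E(T)}\le\bar{C}\mathcal{X}_{p,0}+\bar C\,T\,(R^2+R^3+R^4)$. Choosing $T_*$ so small that the last term is $\le R/2$ — and, if $\mathcal{X}_{p,0}$ is not small, also so small that $\|(\tilde{P}^n,\tilde{c}^n)\|_{\tilde{L}^\infty_{T_*}(L^\infty)}$ stays in the range where $h$ and $1/m$ are smooth, which is possible since this quantity equals its value at $t=0$ plus an $O(T_*)$ correction — closes the induction, so the iterates remain in $\mathcal{B}_R:=\{\,\|\cdot\|_{E(T_*)}\le R\,\}$.

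\textbf{Step 3 (convergence, limit, uniqueness).} Let $\widetilde{\mathcal{Y}}(T)$ be the norm obtained from $\|\cdot\|_{E(T)}$ by lowering every regularity index by one. The differences $\delta^{n}:=(\tilde{P}^{n+1}-\tilde{P}^n,\tilde{\mathbf{u}}^{n+1}-\tilde{\mathbf{u}}^n,\tilde{c}^{n+1}-\tilde{c}^n)$ solve a system of the same type, now with sources of the form ``difference $\times$ uniformly bounded factor''; estimating as in Step 2 but in $\widetilde{\mathcal{Y}}$ and shrinking $T_*$ once more yields $\|\delta^{n}\|_{\widetilde{\mathcal{Y}}(T_*)}\le\frac12\|\delta^{n-1}\|_{\widetilde{\mathcal{Y}}(T_*)}$. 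Hence the iterates converge in $\widetilde{\mathcal{Y}}(T_*)$ to a limit $(\tilde{P},\tilde{\mathbf{u}},\tilde{c})$; the uniform bound in $\mathcal{B}_R$ together with interpolation and Fatou's lemma places the limit in $E(T_*)$, and the strong convergence in $\widetilde{\mathcal{Y}}(T_*)$ lets one pass to the limit in every term of Step 1, so $(\tilde{P},\tilde{\mathbf{u}},\tilde{c})$ solves \eqref{1.5}; time continuity with values in the stated spaces follows from the equations (transport/ODE structure in low frequency, $L^2$ energy estimate in high frequency) and a density argument. For uniqueness, the difference of two solutions in $E(T_*)$ with the same data satisfies the same type of linear system, and the stability estimate in $\widetilde{\mathcal{Y}}(\cdot)$ together with Gr\"onwall (the factor $\exp(\bar C\int_0^t\|\nabla\tilde{\mathbf{u}}\|_{L^\infty}\,d\tau)$ being finite on $[0,T_*]$) forces it to vanish.

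\textbf{Main obstacle.} The delicate part is Step 2 (and the contraction of Step 3, which is of the same difficulty): one must reproduce the entire nonlinear analysis of Proposition \ref{3 1.1} — notably the treatment of the composite functions $h(\tilde{P},\tilde{c})$ and $\frac{1}{m}-\frac{1}{m_\infty}$ through Lemma \ref{2 2.4} and the Taylor splitting of \eqref{3.44}--\eqref{3.47} — while respecting that the non-dissipative gas fraction $\tilde{c}$ carries only $L^2$-type low-frequency regularity, so that every product or composition coupling $\tilde{c}$ to $(\tilde{P},\tilde{\mathbf{u}})$ must be routed through $\dot{B}^{\frac d2}_{2,1}\hookrightarrow\dot{B}^{\frac dp}_{p,1}$, and while foregoing the parabolic time-gain $\|\tilde{P}\|^l_{L^1_t(\dot{B}^{\frac dp+1}_{p,1})}$ available in the global estimate, relying instead solely on the crude factor $T_*$ and on the fact that no high-frequency smoothing is needed on that short time scale. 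Everything else is the routine Danchin-type iteration argument.
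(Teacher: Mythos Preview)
Your proposal is sound but takes a genuinely different route from the paper. The paper does \emph{not} iterate: it regularizes the initial data as $(\tilde P_0^k,\tilde{\mathbf u}_0^k,\tilde c_0^k)=\chi(L_kx)\sum_{|j|\le k}\dot\Delta_j(\tilde P_0,\tilde{\mathbf u}_0,\tilde c_0)$, shows these lie in $H^{s_0}$ with uniform $E_0$-bound, invokes the classical $H^s$ local theory for symmetrizable hyperbolic systems to solve the \emph{full nonlinear} problem \eqref{1.5} with data $(\tilde P_0^k,\tilde{\mathbf u}_0^k,\tilde c_0^k)$, reruns the a~priori estimates of Section~2 on each such solution (trading $L^1_t$ for $T\cdot L^\infty_t$ exactly as you propose) to obtain a uniform $E(T_*)$-bound, and then passes to the limit by Aubin--Lions compactness plus a Cantor diagonal argument. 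Your Picard scheme is equally standard and is in fact closer to Danchin's usual presentation; its advantage is that convergence follows from a contraction rather than a compactness extraction, so no subsequences are needed and the limit is automatically identified. The paper's approach, on the other hand, avoids having to solve the linearized variable-coefficient system in the hybrid Besov setting at every step --- it only calls the off-the-shelf $H^s$ nonlinear theory once.

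One correction to your Step~3: the paper's difference/uniqueness space \eqref{4.18} keeps the \emph{low-frequency} indices of $(\delta\tilde P,\delta\tilde{\mathbf u},\delta\tilde c)$ unchanged and lowers only the \emph{high-frequency} index from $\frac d2+1$ to $\frac d2$. Your prescription of ``lowering every regularity index by one'' would put $\delta\tilde P^l$ in $\dot B^{\frac dp-2}_{p,1}$, which for $p$ near the upper endpoint of the admissible range falls outside the condition $s>-\min(\frac dp,\frac d{p'})$ of Lemma~\ref{2 2.7} (e.g.\ $d=3$, $p=4$ gives $\frac dp-2=-\frac54<-\frac34$). Adjust $\widetilde{\mathcal Y}$ accordingly and your argument goes through.
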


\begin{proof}
	We divide the process of proof into four steps.
	\begin{enumerate}[Step 1:]
		\item Construction of the approximate sequence.\\
		We define the regularized initial data as
		\begin{equation}
			(\tilde{P}_0^k, \tilde{\mathbf{u}}_0^k, \tilde{c}_0^k)\triangleq\chi(L_kx)\sum_{|j|\le k}\dot{\Delta}_j(\tilde{P}_0, \tilde{\mathbf{u}}_0, \tilde{c}_0),
		\end{equation}
		where $\chi(x)\in\mathcal{S}(\mathbb{R}^d)$ satisfies $\chi(0)=1$ and Supp $\mathcal{F}(\chi)(\xi)\subset B(0,1)$, $L_k>0$ is a constant chosen later and $\lim_{k\to \infty}L_k=0$.\\
		Now we claim that $(\tilde{P}_0^k, \tilde{\mathbf{u}}_0^k, \tilde{c}_0^k)$ belongs to $H^{s_0}(\mathbb{R}^d)$ for fixed $k\ge0$ and $s_0>\frac{d}{2}+1$. According to Bernstein inequality and the embedding $\dot{B}^{\frac{d}{p}}_{p,1}\hookrightarrow L^\infty$ that change the space
		$$\|(\tilde{P}_0^k, \tilde{\mathbf{u}}_0^k, \tilde{c}_0^k)\|_{H^{s_0}(\mathbb{R}^d)}\lesssim2^{ks_0}\|(\tilde{P}_0^k, \tilde{\mathbf{u}}_0^k, \tilde{c}_0^k)\|_{L^2}\lesssim2^{ks_0}\|\chi(L_k\cdot)\|_{L^2}\|(\tilde{P}_0, \tilde{\mathbf{u}}_0, \tilde{c}_0)\|_{L^\infty}<\infty.$$
		Besides, we want to prove that there exists a suitably large integer $k_0$ such that for all $k\ge k_0$, $(\tilde{P}_0^k, \tilde{\mathbf{u}}_0^k, \tilde{c}_0^k)$ has the uniform bound as
		\begin{align}
			\|(\tilde{P}_0^k, \tilde{\mathbf{u}}_0^k, \tilde{c}_0^k)\|_{E_0}\lesssim\|(\tilde{P}_0, \tilde{\mathbf{u}}_0, \tilde{c}_0)\|_{E_0}.\label{4.2}
		\end{align}
		At the beginning, recalling [Proposition 2.18] in \cite{Bahouri}, there is a law that
$$
\|\chi(\frac{\cdot}{k})\|_{\dot{B}^{\frac{d}{p}}_{p,1}}\sim\|\chi\|_{\dot{B}^{\frac{d}{p}}_{p,1}}\lesssim1.
$$
Thus, Lemma \ref{2 2.7} and Bernstein inequality give the low frequency estimate as
		\begin{align*}
			\|\tilde{P}_0^k\|^l_{\dot{B}^{\frac{d}{p}-1}_{p,1}}\lesssim&\|\chi(L_kx)\|_{\dot{B}^{\frac{d}{p}}_{p,1}}\|\sum_{|j|\le k}\dot{\Delta}_j\tilde{P}_0\|_{\dot{B}^{\frac{d}{p}-1}_{p,1}}\lesssim\|\chi(\cdot)\|_{{\dot{B}^{\frac{d}{p}}_{p,1}}}\|\tilde{P}_0\|_{{\dot{B}^{\frac{d}{p}-1}_{p,1}}}\\
			&\lesssim\|\tilde{P}_0\|^l_{\dot{B}^{\frac{d}{p}-1}_{p,1}}+\|\tilde{P}_0\|^h_{\dot{B}^{\frac{d}{p}+1}_{p,1}}\lesssim\|\tilde{P}_0\|^l_{\dot{B}^{\frac{d}{p}-1}_{p,1}}+\|\tilde{P}_0\|^h_{\dot{B}^{\frac{d}{2}+1}_{2,1}},
		\end{align*}
		and the high frequency estimate as
		\begin{align*}
			\|\tilde{P}_0^k\|^h_{\dot{B}^{\frac{d}{2}+1}_{2,1}}\lesssim&\|\chi(L_kx)\sum_{|j|\le k}\dot{\Delta}_j\tilde{P}_0\|^h_{\dot{B}^{\frac{d}{2}+1}_{2,1}}\\
			\lesssim&\|\chi\|_{\dot{B}^{\frac{d}{p}}_{p,1}}\|\tilde{P}_0\|^h_{\dot{B}^{\frac{d}{2}+1}_{2,1}}+\|\tilde{P}_0\|_{\dot{B}^{\frac{d}{p}}_{p,1}}\|\chi\|^h_{\dot{B}^{\frac{d}{2}+1}_{2,1}}\\
			&+\|\tilde{P}_0\|^l_{\dot{B}^{\frac{d}{p}}_{p,1}}\|\chi\|^l_{\dot{B}^{\frac{d}{p}}_{p,1}}+\|\tilde{P}_0\|_{\dot{B}^{\frac{d}{p}}_{p,1}}\|\chi\|^l_{\dot{B}^{\frac{d}{p}+1}_{p,1}}\\
			\lesssim&(\|\chi\|_{\dot{B}^{\frac{d}{p}}_{p,1}}+\|\chi\|_{\dot{B}^{\frac{d}{2}+1}_{2,1}})(\|\tilde{P}_0\|^l_{\dot{B}^{\frac{d}{p}}_{p,1}}+\|\tilde{P}_0\|^h_{\dot{B}^{\frac{d}{2}+1}_{2,1}})\\
			\lesssim&\|\tilde{P}_0\|^l_{\dot{B}^{\frac{d}{p}}_{p,1}}+\|\tilde{P}_0\|^h_{\dot{B}^{\frac{d}{2}+1}_{2,1}}.
		\end{align*}
		Then, using the same method to deal with the $( \tilde{\mathbf{u}}_0^k, \tilde{c}_0^k)$, we can get the similar results,  and then (\ref{4.2}) is true.
        And now, we are going to explain that $(\tilde{P}_0^k, \tilde{\mathbf{u}}_0^k, \tilde{c}_0^k)$ has a limit
		\begin{align}
			\lim_{k\to \infty}\|\tilde{P}_0^k-\tilde{P}_0\|^l_{\dot{B}^{\frac{d}{p}-1}_{p,1}}=0,
		\end{align}
and $(\tilde{\mathbf{u}}_0^k, \tilde{c}_0^k)$ have their corresponding limits. In fact, according to [Lemma 4.2] in \cite{Abiidi}, we decompose
		\begin{align}
			\tilde{P}_0^k-\tilde{P}_0=(\chi(L_kx)-1)\sum_{|j|\le k}\dot{\Delta}_j\tilde{P}_0-\sum_{|j|\ge k+1}\dot{\Delta}_j\tilde{P}_0.\label{4.4}
		\end{align}
		
		For any $\eta_1>0$, we may find $k_0^*=k_0^*(\eta_1)$ such that for all $k>k_0^*$,
		$$
\|\sum_{|j|\ge k+1}\dot{\Delta}_j\tilde{P}_0\|^l_{\dot{B}^{\frac{d}{p}-1}_{p,1}}\lesssim\sum_{j\le -k, j\le J_\epsilon}2^{(\frac{d}{p}-1)j}\|\dot{\Delta}_j\tilde{P}_0\|_{L^p}<\eta_1.
$$
		On the other side, since Supp $\mathcal{F}(\chi(L_k\cdot))\subset B(0,L^k)$ and
		$$
{\rm Supp}\ \mathcal{F}(\sum_{|j|\le k}\dot{\Delta}_j\tilde{P}_0)\subset\big\{\xi\in \mathbb{R^d}:\frac{3}{4}2^{-k}\le |\xi|\le \frac{8}{3}2^k\big\},
$$
		we get $\mathcal{F}(\tilde{P}_0^k)\subset\{\xi \in \mathbb{R}^d:\frac{3}{8}2^{-k}\le |\xi|\le \frac{11}{3}2^k\}$ for $L_k\le \frac{3}{8}2^{-k}$, so $\dot{\Delta}_{j'}\tilde{P}_0^k=0$ if $|j'|\ge k+3$. Hence, the first term in the right-hand side of (\ref{4.4}) can be estimated as
		\begin{align*}
			\|(\chi(L_kx)-1)\sum_{|j|\le k}\dot{\Delta}_j\tilde{P}_0\|^l_{\dot{B}^{\frac{d}{p}-1}_{p,1}}\lesssim&2^k\|(\chi(L_k\cdot)-1)\sum_{|j|\le k}\dot{\Delta}_j\tilde{P}_0\|^l_{\dot{B}^{\frac{d}{p}}_{p,1}}\\
			\lesssim&2^k\|\chi(L_k\cdot)-1\|_{\dot{B}^{\frac{d}{p}}_{p,1}}\|\tilde{P}_0\|_{\dot{B}^{\frac{d}{p}}_{p,1}}\to 0\quad{\rm as}\quad k\to\infty,
		\end{align*}
		where we choose a suitable constant $L_k$.\\
		Thanks to the classical local well-posedness theorem for hyperbolic systems (\cite{Dafermos}), for fixed $k\geq k_0,\ s_0>\frac{d}{2}+1$, there exists a time $T_k>0$ such that the Cauchy problem of the system (\ref{1.5}) associated with the initial datum $(\tilde{P}^k_0,\tilde{\mathbf{u}}^k_0,\tilde{c}^k_0)$ has a unique solution $(\tilde{P}^k,\tilde{\mathbf{u}}^k,\tilde{c}^k)\in \mathcal{C}([0,T_k];H^{s_0}(\mathbb{R}^d)).$
		
		\item Uniform estimates.\\
		According to the similar calculation as in Section 2, for all $k\ge k_0$ and $0<t<T_k$, we can get that
		\begin{align*}
			&\|\tilde{P}^k\|^l_{\tilde{L}_t^{\infty}(\dot{B}_{p,1}^{\frac{d}{p}-1})}
			+\|\tilde{\mathbf{u}}^k\|^l_{\tilde{L}_t^{\infty}(\dot{B}_{p,1}^{\frac{d}{p}})}
			+\|\tilde{c}^k\|^l_{\tilde{L}_t^\infty(\dot{B}_{2,1}^{\frac{d}{2}-1})}\notag
			+\|(\tilde{P}^k,\tilde{\mathbf{u}}^k,\tilde{c}^k)\|^h_{\tilde{L}_t^\infty(\dot{B}_{2,1}^{\frac{d}{2}+1})}\\
			\lesssim& \mathcal{X}_{p,0}+\mathcal{X}^2_p(t)+\mathcal{X}^3_p(t)+\mathcal{X}^4_p(t).
		\end{align*}
		Additionally, some terms in $\mathcal{X}_p(t)$ can be treated as
		\begin{align}
			&\|\tilde{P}^k\|^l_{\tilde{L}_t^{1}(\dot{B}_{p,1}^{\frac{d}{p}+1})}
			+\|\tilde{\mathbf{u}}^k\|^l_{\tilde{L}_t^{1}(\dot{B}_{p,1}^{\frac{d}{p}})}
			+\|(\tilde{P}^k,\tilde{\mathbf{u}}^k)\|^h_{\tilde{L}_t^1(\dot{B}_{2,1}^{\frac{d}{2}+1})}\notag\\
			\lesssim&T_k((\|\tilde{P}^k\|^l_{\tilde{L}_t^{\infty}(\dot{B}_{p,1}^{\frac{d}{p}-1})}
			+\|\tilde{\mathbf{u}}^k\|^l_{\tilde{L}_t^{\infty}(\dot{B}_{p,1}^{\frac{d}{p}})}
			+\|(\tilde{P}^k,\tilde{\mathbf{u}}^k)\|^h_{\tilde{L}_t^\infty(\dot{B}_{2,1}^{\frac{d}{2}+1})}),
		\end{align}
		so we can acquire
		\begin{align}
			\|(\tilde{P}^k,\tilde{\mathbf{u}}^k,\tilde{c}^k)\|_{E(T_k)}\lesssim&\|(\tilde{P}_0, \tilde{\mathbf{u}}_0, \tilde{c}_0)\|_{E_0}+T_k\|(\tilde{P}^k,\tilde{\mathbf{u}}^k,\tilde{c}^k)\|_{E(T_k)}^2\notag\\
			&+T_k\|(\tilde{P}^k,\tilde{\mathbf{u}}^k,\tilde{c}^k)\|_{E(T_k)}^3+T_k\|(\tilde{P}^k,\tilde{\mathbf{u}}^k,\tilde{c}^k)\|_{E(T_k)}^4.
		\end{align}
		Hence, there exists a constant $C_*>0$ such that
		\begin{align}
			\|(\tilde{P}^k,\tilde{\mathbf{u}}^k,\tilde{c}^k)\|_{E(T_k)}\le&C_*\|(\tilde{P}_0, \tilde{\mathbf{u}}_0, \tilde{c}_0)\|_{E_0}+C_*T_k\|(\tilde{P}^k,\tilde{\mathbf{u}}^k,\tilde{c}^k)\|_{E(T_k)}^2\notag\\
			&+C_*T_k\|(\tilde{P}^k,\tilde{\mathbf{u}}^k,\tilde{c}^k)\|_{E(T^k)}^3+C_*T_k\|(\tilde{P}^k,\tilde{\mathbf{u}}^k,\tilde{c}^k)\|_{E(T_k)}^4.
		\end{align}
		Now, we define the time
		\begin{align}
			T_*\triangleq\frac{1}{4C_*^2\|(\tilde{P}_0, \tilde{\mathbf{u}}_0, \tilde{c}_0)\|_{E_0}(1+2C_*\|(\tilde{P}_0, \tilde{\mathbf{u}}_0, \tilde{c}_0)\|_{E_0}^2+4C_*^2\|(\tilde{P}_0, \tilde{\mathbf{u}}_0, \tilde{c}_0)\|_{E_0}^2)}
		\end{align}
		and the set
		\begin{align}
			I^k\triangleq\big\{t\in [0,T_*]:\|(\tilde{P}^k,\tilde{\mathbf{u}}^k,\tilde{c}^k)\|_{E(t)}\le 2C_*\|(\tilde{P}_0, \tilde{\mathbf{u}}_0, \tilde{c}_0)\|_{E_0}\big\}.\label{4.9}
		\end{align}
		It is easy to see that $T_*<T_k$. On the one hand, owing to the time continuity of $(\tilde{P}^k,\tilde{\mathbf{u}}^k,\tilde{c}^k)$ on $[0,T_k)$, $I^k$ is a nonempty closed subset of $[0,T_k]$ for every $k\ge k_0$. On the other hand, recalling the time continuity of $(\tilde{P}^k,\tilde{\mathbf{u}}^k,\tilde{c}^k)$ on $[0,T_k)$, one can present that there exists a ball $B(t,\eta_*)$ for a enough small constant $\eta_*>0$ such that $[0,T_k]\cap B(t,\eta_*)\subset I^k$, which implies that $I^k$ is an open subset of $[0,T_k]$. So, we can acquire $I^k=[0,T_k]$ and combine with (\ref{4.9}), we know that $\|(\tilde{P}^k,\tilde{\mathbf{u}}^k,\tilde{c}^k)\|_{E(T_*)}\le 2C_*\|(\tilde{P}_0, \tilde{\mathbf{u}}_0, \tilde{c}_0)\|_{E_0}$, which is uniform with respect to $k\ge k_0$.
		\item Convergence of the approximate sequence.\\
		Since the uniform estimate has been calculated in Step 2, we can claim that there exists $(\tilde{P},\tilde{\mathbf{u}},\tilde{c})$ such that as $k\to \infty$, it holds up to a subsequence that
		\begin{align*}
			(\tilde{P}^k,\tilde{\mathbf{u}}^k,\tilde{c}^k)\stackrel{*}\to(\tilde{P},\tilde{\mathbf{u}},\tilde{c})\ \ \ \ {\rm in}\ \ \ \ L^\infty([0,T_*];L^\infty(\mathbb{R}^d)).
		\end{align*}
		In order to deal with the nonlinear terms in $\tilde{G}_1$ and $\tilde{G}_2$, we first need to prove the strong compactness of $\{\tilde{P}^k,\tilde{\mathbf{u}}^k,\tilde{c}^k\}_{k\ge k_0}$. Based on $(\ref{1.5})_1$ we get
		\begin{align}
			\|\partial_t\tilde{P}^k\|_{L^\infty(0,T;{\dot{B}^{\frac{d}{p}-1}_{p,1}})}\lesssim \|\tilde{\mathbf{u}}^k\|_{L^\infty(0,T;{\dot{B}^{\frac{d}{p}}_{p,1}})}+\|\tilde{G_1}^k\|_{L^\infty(0,T;{\dot{B}^{\frac{d}{p}-1}_{p,1}})}.
		\end{align}
		For $\tilde{G_1}^k$,  using Lemma \ref{2 2.7} to give that
		\begin{align}
			\|\tilde{G_1}^k&\|_{L^\infty(0,T;{\dot{B}^{\frac{d}{p}-1}_{p,1}})}\lesssim\|h(\tilde{P}^k,\tilde{c}^k)\nabla\cdot\tilde{\mathbf{u}}^k\|_{L^\infty(0,T;{\dot{B}^{\frac{d}{p}-1}_{p,1}})}+\|
			\tilde{\mathbf{u}}^k\cdot\nabla\tilde{P}^k\|_{L^\infty(0,T;{\dot{B}^{\frac{d}{p}-1}_{p,1}})}\notag\\
			\lesssim&\|(\tilde{P}^k,\tilde{c}^k)\|_{L^\infty(0,T;{\dot{B}^{\frac{d}{p}}_{p,1}})}\|\tilde{\mathbf{u}}^k\|_{L^\infty(0,T;{\dot{B}^{\frac{d}{p}}_{p,1}})}+\|\tilde{\mathbf{u}}^k\|_{L^\infty(0,T;{\dot{B}^{\frac{d}{p}}_{p,1}})}\|\tilde{P}^k\|_{L^\infty(0,T;{\dot{B}^{\frac{d}{p}}_{p,1}})}\notag\\
			\lesssim&\|(\tilde{P}^k,\tilde{\mathbf{u}}^k,\tilde{c}^k)\|_{E(T)}^2,\label{4.11}
		\end{align}
		 and then we eventually have
		\begin{align}
			&\|\partial_t\tilde{P}^k\|_{L^\infty(0,T;{\dot{B}^{\frac{d}{p}-1}_{p,1}})}\lesssim\|(\tilde{P}^k,\tilde{\mathbf{u}}^k,\tilde{c}^k)\|_{E(T)}+\|(\tilde{P}^k,\tilde{\mathbf{u}}^k,\tilde{c}^k)\|_{E(T)}^2\notag\\
			&\lesssim\|(\tilde{P}_0, \tilde{\mathbf{u}}_0, \tilde{c}_0)\|_{E_0}+\|(\tilde{P}_0, \tilde{\mathbf{u}}_0, \tilde{c}_0)\|_{E_0}^2.
		\end{align}
		From compact embedding ${\dot{B}^{\frac{d}{p}}_{p,1}}\hookrightarrow L^1_{loc}(\mathbb{R}^d)$, we deduce from the Aubin-Lions lemma and the Cantor diagonal argument that, as $k\to \infty$, for any bounded set $K\subset\mathbb{R}^d$, there holds
		\begin{align}
			\tilde{P}^k\to \tilde{P}\ \ \ \ {\rm in}\ \ \ \ L^1(0,T_*;L^1(K)).
		\end{align}
	The compactness of $\tilde{\mathbf{u}}^k$ and $\tilde{c}^k$ can be verified similarly, and we denote their limits by $\tilde{\mathbf{u}}$ and $\tilde{c}$ respectively.
		Thus, $(\tilde{P},\tilde{\mathbf{u}},\tilde{c})$ is indeed the  solution of the system (\ref{1.5}) in the sense of distributions.

At last, we verify the time continuity of the solution $(\tilde{P},\tilde{\mathbf{u}},\tilde{c})$. Taking advantage of $(\ref{1.5})_1$, for any $0\le t_1,t_2\le T_*$, we get
		\begin{align}\label{4.14}
			\|\tilde{P}^l(t_1)-\tilde{P}^l(t_2)\|_{\dot{B}^{\frac{d}{p}-1}_{p,1}}\lesssim&\sup_{\tau\in[0,1]}\|\partial_t\tilde{P}(t_2+\tau(t_1-t_2))|t_1-t_2|\|^l_{\dot{B}^{\frac{d}{p}-1}_{p,1}}\notag\\
			\lesssim&\|\nabla\cdot\tilde{\mathbf{u}}+\tilde{G}_1\|^l_{L^\infty(0,T_*;\dot{B}^{\frac{d}{p}-1}_{p,1})}|t_1-t_2|.
		\end{align}
		The term $\|\nabla\cdot\tilde{\mathbf{u}}+\tilde{G}_1\|^l_{L^\infty(0,T_*;\dot{B}^{\frac{d}{p}-1}_{p,1})}<\infty$ has been justified in (\ref{4.11}), so (\ref{4.14}) implies that $\tilde{P}^l\in \mathcal{C}([0,T_*];\dot{B}^{\frac{d}{p}-1}_{p,1})$. Using the similar method, we get $\tilde{\mathbf{u}}^l\in \mathcal{C}([0,T_*];\dot{B}^{\frac{d}{p}}_{p,1})$ and $\tilde{c}^l\in \mathcal{C}([0,T_*];\dot{B}^{\frac{d}{2}-1}_{2,1})$. In terms of the high frequency part, it only requires to prove $(\tilde{P},\tilde{\mathbf{u}},\tilde{c})^h\in\mathcal{C}([0,T_*];\dot{B}^{\frac{d}{2}}_{2,1})$ by using the above method.  To this end,  we introduced the decomposition $(\tilde{P},\tilde{\mathbf{u}},\tilde{c})^h=S_{N_0}(\tilde{P},\tilde{\mathbf{u}},\tilde{c})^h+(Id-S_{N_0})(\tilde{P},\tilde{\mathbf{u}},\tilde{c})^h$. On the one hand, we can acquire the low frequency $S_{N_0}(\tilde{P},\tilde{\mathbf{u}},\tilde{c})^h\in\mathcal{C}([0,T_*];\dot{B}^{\frac{d}{2}+1}_{2,1})$. On the other hand, the high frequency part can be bounded as
		\begin{align*}
			\|(Id-S_{N_0})(\tilde{P},\tilde{\mathbf{u}},\tilde{c})^h\|_{L^\infty(0,T_*;\dot{B}^{\frac{d}{2}+1}_{2,1})}\lesssim \sum_{j\ge \max\{J_\epsilon,N_0\}-1}2^{(\frac{d}{2}+1)j}\sup_{t\in[0,T_*]}\|\dot{\Delta}_j(\tilde{P},\tilde{\mathbf{u}},\tilde{c})\|_{L^2},
		\end{align*}
where the right hand side may be arbitrarily small as long as $N_0$ is chosen to be large enough. Therefore, we have $(\tilde{P},\tilde{\mathbf{u}},\tilde{c})^h\in\mathcal{C}([0,T_*];\dot{B}^{\frac{d}{2}+1}_{2,1})$.
		\item Proof of the uniqueness.\\
		For a given time $T>0$, let $(\tilde{P}_1,\tilde{\mathbf{u}}_1,\tilde{c}_1)$ and $(\tilde{P}_2,\tilde{\mathbf{u}}_2,\tilde{c}_2)$ be two solutions with the initial data $(\tilde{P}_0,\tilde{\mathbf{u}}_0,\tilde{c}_0)$. Thus define $(\delta\tilde{P},\delta\tilde{\mathbf{u}},\delta\tilde{c})=(\tilde{P}_1-\tilde{P}_2,\tilde{\mathbf{u}}_1-\tilde{\mathbf{u}}_2,\tilde{c}_1-\tilde{c}_1)$ and the system (\ref{1.5}) can be transformed as
		\begin{equation}
			\left\{\begin{array}{ll}
				(\delta\tilde{P})_t+\kappa_2\nabla\cdot\delta\tilde{\mathbf{u}}=H_1,\\[2mm]
				(\delta\tilde{\mathbf{u}})_t+\kappa_2\nabla\delta\tilde{P}+\alpha\delta\tilde{\mathbf{u}}=H_2,\\[2mm]
				(\delta\tilde{c})_t=H_3,
			\end{array}\right.
		\end{equation}
		where
		\begin{equation}
			\begin{split}
				H_1:=&\tilde{G}_1(\tilde{P}_1,\tilde{\mathbf{u}}_1,\tilde{c}_1)-{G}_1(\tilde{P}_2,\tilde{\mathbf{u}}_2,\tilde{c}_2)=-\kappa_1h(\tilde{P}_1,\tilde{c}_1)\nabla\cdot\delta \tilde{\mathbf{u}}\\
				&-\kappa_1(h(\tilde{P}_1,\tilde{c}_1)-h(\tilde{P}_1,\tilde{c}_2))\nabla\cdot \tilde{\mathbf{u}}_2-\kappa_1(h(\tilde{P}_1,\tilde{c}_2)-h(\tilde{P}_2,\tilde{c}_2))\nabla\cdot \tilde{\mathbf{u}}_2\\
				&-\kappa_1\tilde{\mathbf{u}}_1\cdot\nabla\delta \tilde{P}-\kappa_1\delta\tilde{\mathbf{u}}\cdot\nabla\tilde{P}_2,\\
				H_2:=&\tilde{G}_2(\tilde{P}_1,\tilde{\mathbf{u}}_1,\tilde{c}_1)-\tilde{G}_2(\tilde{P}_2,\tilde{\mathbf{u}}_2,\tilde{c}_2)=-\kappa_1\tilde{\mathbf{u}}_1\cdot\nabla\delta\tilde{\mathbf{u}}-\kappa_1\delta\tilde{\mathbf{u}}\cdot\nabla\tilde{\mathbf{u}}_2\\
				&-\frac{1}{\kappa_1}M(\tilde{P}_1,\tilde{c}_1)\cdot\nabla\delta\tilde{P}-\kappa_1(M(\tilde{P}_1,\tilde{c}_1)-M(\tilde{P}_1,\tilde{c}_2))\cdot\nabla \tilde{P}_2\\
				&-\kappa_1(M(\tilde{P}_1,\tilde{c}_2)-M(\tilde{P}_2,\tilde{c}_2))\cdot\nabla \tilde{P}_2,\\
				H_3:=&-\kappa_1\tilde{\mathbf{u}}_1\cdot\nabla\tilde{c}_1+\kappa_1\tilde{\mathbf{u}}_2\cdot\nabla\tilde{c}_2=-\kappa_1\tilde{\mathbf{u}}_1\cdot\nabla\delta\tilde{c}-\kappa_1\delta\tilde{\mathbf{u}}\cdot\nabla\tilde{c}_2.
			\end{split}
		\end{equation}
		At first, we shall estimate $(\delta\tilde{P},\delta\tilde{\mathbf{u}},\delta\tilde{c})$ in the space
		\begin{align}
			\Big\{(&\delta\tilde{P},\delta\tilde{\mathbf{u}},\delta\tilde{c}) : \delta\tilde{P}^l\in \mathcal{C}([0,T_*];\dot{B}^{\frac{d}{p}-1}_{p,1}), \delta\tilde{\mathbf{u}}^l\in \mathcal{C}([0,T_*];\dot{B}^{\frac{d}{p}}_{p,1}), \delta\tilde{c}^l\in \mathcal{C}([0,T_*];\dot{B}^{\frac{d}{2}-1}_{2,1}),\notag\\
			&(\delta\tilde{P}^h, \delta\tilde{\mathbf{u}}^h, \delta\tilde{c}^h)\in \mathcal{C}([0,T_*];\dot{B}^{\frac{d}{2}}_{2,1})\Big\}.\label{4.18}
		\end{align}
		Arguing similarly as in Section 2, the low frequency estimate can be given easily as
		\begin{align}
			&\|\delta\tilde{P}\|^l_{\tilde{L}^\infty_{t}(\dot{B}^{\frac{d}{p}-1}_{p,1})}+\|\delta\tilde{P}\|^l_{L^1_{t}(\dot{B}^{\frac{d}{p}+1}_{p,1})}+\|\delta\tilde{\mathbf{u}}\|^l_{\tilde{L}^\infty_{t}(\dot{B}^{\frac{d}{p}}_{p,1})}+\|\delta\tilde{\mathbf{u}}\|^l_{L^1_{t}(\dot{B}^{\frac{d}{p}}_{p,1})}+\|\delta\tilde{c}\|^l_{\tilde{L}^\infty_{t}(\dot{B}^{\frac{d}{2}-1}_{2,1})}\notag\\
			\lesssim&\|H_1\|^l_{L^1_{t}(\dot{B}^{\frac{d}{p}-1}_{p,1})}+\|H_2\|^l_{L^1_{t}(\dot{B}^{\frac{d}{p}}_{p,1})}+\|H_3\|^l_{L^1_{t}(\dot{B}^{\frac{d}{2}-1}_{2,1})}.
		\end{align}
		Most of the above terms on $H_1$, $H_2$ and $H_3$ have been estimated in (\ref{3.7}), (\ref{3.8}) and (\ref{3.15}), so we just deal with the remain terms.
		By using Lemma \ref{A.F(m)}, we get
		\begin{align}
			&\|(h(\tilde{P}_1,\tilde{c}_1)-h(\tilde{P}_1,\tilde{c}_2))\nabla\cdot \tilde{\mathbf{u}}_2\|^l_{L^1_{t}(\dot{B}^{\frac{d}{p}-1}_{p,1})}\notag\\
			\lesssim& \int_{0}^{T}\|h(\tilde{P}_1,\tilde{c}_1)-h(\tilde{P}_1,\tilde{c}_2)\|_{\dot{B}^{\frac{d}{p}}_{p,1}}\|\nabla\cdot \tilde{\mathbf{u}}_2\|_{\dot{B}^{\frac{d}{p}-1}_{p,1}}\notag\\
			\lesssim&\int_{0}^{T}\|\tilde{\mathbf{u}}_2\|_{\dot{B}^{\frac{d}{p}}_{p,1}}(1+\|(\tilde{P}_1,\tilde{c}_1,\tilde{c}_2)\|_{\dot{B}^{\frac{d}{p}}_{p,1}})
\|\delta\tilde{c}\|_{\dot{B}^{\frac{d}{p}}_{p,1}}\notag\\
			\lesssim&\int_{0}^{T}\|\tilde{\mathbf{u}}_2\|_{\dot{B}^{\frac{d}{p}}_{p,1}}(1+\|(\tilde{P}_1,\tilde{c}_1,\tilde{c}_2)\|_{\dot{B}^{\frac{d}{p}}_{p,1}})
(\|\delta\tilde{c}\|^l_{\dot{B}^{\frac{d}{2}-1}_{2,1}}+\|\delta\tilde{c}\|^h_{\dot{B}^{\frac{d}{2}}_{2,1}}).
		\end{align}
		Similarly, the terms  $(h(\tilde{P}_1,\tilde{c}_2)-h(\tilde{P}_2,\tilde{c}_2))\nabla\cdot \tilde{\mathbf{u}}_2$,  $(M(\tilde{P}_1,\tilde{c}_1)-M(\tilde{P}_1,\tilde{c}_2))\cdot\nabla \tilde{P}_2$ and  $(M(\tilde{P}_1,\tilde{c}_2)-M(\tilde{P}_2,\tilde{c}_2))\cdot\nabla \tilde{P}_2$ can be bounded in the same way, we omit here.
		Thus, we can draw a conclusion that
		\begin{align}
			&\|\delta\tilde{P}\|^l_{\tilde{L}^\infty_{t}(\dot{B}^{\frac{d}{p}-1}_{p,1})}+\|\delta\tilde{P}\|^l_{L^1_{t}(\dot{B}^{\frac{d}{p}+1}_{p,1})}+\|\delta\tilde{\mathbf{u}}\|^l_{\tilde{L}^\infty_{t}(\dot{B}^{\frac{d}{p}}_{p,1})}+\|\delta\tilde{\mathbf{u}}\|^l_{L^1_{t}(\dot{B}^{\frac{d}{p}}_{p,1})}+\|\delta\tilde{c}\|^l_{\tilde{L}^\infty_{t}(\dot{B}^{\frac{d}{2}-1}_{2,1})}\notag\\
			\lesssim&\int_{0}^{T}(\|(\tilde{P}_1,\tilde{P}_2)\|_{\dot{B}^{\frac{d}{p}}_{p,1}}+\|(\tilde{\mathbf{u}}_1,\tilde{\mathbf{u}}_2)\|_{\dot{B}^{\frac{d}{p}}_{p,1}}+\|(\tilde{c}_1,\tilde{c}_2)\|_{\dot{B}^{\frac{d}{2}}_{2,1}})(1+\|\tilde{P}_2\|_{\dot{B}^{\frac{d}{p}}_{p,1}}+\|\tilde{\mathbf{u}}_2\|_{\dot{B}^{\frac{d}{p}}_{p,1}})\notag\\
			&\ \ \ \ \cdot(\|\delta\tilde{P}\|^l_{\dot{B}^{\frac{d}{p}-1}_{p,1}}+\|\delta\tilde{\mathbf{u}}\|^l_{\dot{B}^{\frac{d}{p}}_{p,1}}+\|\delta\tilde{c}\|^l_{\dot{B}^{\frac{d}{2}-1}_{2,1}}+\|(\delta\tilde{P},\delta\tilde{\mathbf{u}},\delta\tilde{c})\|^h_{\dot{B}^{\frac{d}{2}}_{2,1}}).\label{4.21}
		\end{align}
		Next, we turn to deal with the high frequency part. It is similar to Subsection 2.2, employing the same method and transfer the system (\ref{1.5}) as
		\begin{equation}
			\left\{\begin{array}{ll}
				(\dot{\Delta}_j\delta\tilde{P})_t+\kappa_2{\rm div}\dot{\Delta}_j\delta\tilde{\mathbf{u}}+\kappa_1\dot{S}_{j-1}h(\tilde{P}_1,\tilde{c}_1){\rm div}\dot{\Delta}_j\delta\tilde{\mathbf{u}}+\kappa_1\dot{S}_{j-1}\tilde{\mathbf{u}}_1\nabla\dot{\Delta}_j\delta\tilde{P}\\[2mm]
				\quad\quad\quad\quad=\kappa_1\dot{S}_{j-1}h(\tilde{P}_1,\tilde{c}_1){\rm div}\dot{\Delta}_j\delta\tilde{\mathbf{u}}-\kappa_1\dot{\Delta}_j(h(\tilde{P}_1,\tilde{c}_1){\rm div}\delta \tilde{\mathbf{u}})\\[2mm]
				\quad\quad\quad\quad\quad+\kappa_1\dot{S}_{j-1}\tilde{\mathbf{u}}_1\cdot\nabla\dot{\Delta}_j\delta\tilde{P}-\kappa_1\dot{\Delta}_j(\tilde{\mathbf{u}}_1\cdot\nabla\delta \tilde{P})\\[2mm]
				\quad\quad\quad\quad\quad-\kappa_1\dot{\Delta}_j((h(\tilde{P}_1,\tilde{c}_1)-h(\tilde{P}_1,\tilde{c}_2))\nabla\cdot \tilde{\mathbf{u}}_2)\\[2mm]
				\quad\quad\quad\quad\quad-\kappa_1\dot{\Delta}_j((h(\tilde{P}_1,\tilde{c}_2)-h(\tilde{P}_2,\tilde{c}_2))\nabla\cdot \tilde{\mathbf{u}}_2)\\[2mm]
				\quad\quad\quad\quad\quad-\kappa_1\dot{\Delta}_j(\delta\tilde{\mathbf{u}}\cdot\nabla\tilde{P}_2):=I_1+I_2+R_1,\\[2mm]
				(\dot{\Delta}_j\delta\tilde{\mathbf{u}})_t+\kappa_2\nabla\dot{\Delta}_j\delta\tilde{P}+\alpha\dot{\Delta}_j\delta\tilde{\mathbf{u}}+k_1\dot{S}_{j-1}\tilde{\mathbf{u}}_1\cdot\nabla\dot{\Delta}_j\delta\tilde{\mathbf{u}}+\frac{1}{k_1}\dot{S}_{j-1}M(\tilde{P}_1,\tilde{c}_1)\cdot\dot{\Delta}_j\delta\tilde{P}\\[2mm]
				\quad\quad\quad\quad=k_1\dot{S}_{j-1}\tilde{\mathbf{u}}_1\cdot\nabla\dot{\Delta}_j\delta\tilde{\mathbf{u}}-\dot{\Delta}_j(k_1\tilde{\mathbf{u}}_1\cdot\nabla\delta\tilde{\mathbf{u}})\\[2mm]
				\quad\quad\quad\quad\quad+\frac{1}{k_1}\dot{S}_{j-1}M(\tilde{P}_1,\tilde{c}_1)\cdot\nabla\dot{\Delta}_j\delta\tilde{P}-\dot{\Delta}_j(\frac{1}{k_1}M(\tilde{P}_1,\tilde{c}_1)\cdot\nabla\delta\tilde{P})\\[2mm]
				\quad\quad\quad\quad\quad-\dot{\Delta}_j(k_1\delta\tilde{\mathbf{u}}\cdot\nabla\tilde{\mathbf{u}}_2)\\[2mm]
				\quad\quad\quad\quad\quad-\dot{\Delta}_j(k_1(M(\tilde{P}_1,\tilde{c}_1)-M(\tilde{P}_1,\tilde{c}_2))\cdot\nabla\tilde{P}_2)\\[2mm]
				\quad\quad\quad\quad\quad-\dot{\Delta}_j(k_1(M(\tilde{P}_1,\tilde{c}_2)-M(\tilde{P}_2,\tilde{c}_2))\cdot\nabla\tilde{P}_2):=I_3+I_4+R_2,\\[2mm]
				(\dot{\Delta}_j\delta\tilde{c})_t+k_1\dot{S}_{j-1}\tilde{\mathbf{u}}_1\cdot\nabla\dot{\Delta}_j\delta\tilde{c}=k_1\dot{S}_{j-1}\tilde{\mathbf{u}}_1\cdot\nabla\dot{\Delta}_j\delta\tilde{c}-\dot{\Delta}_j(k_1\tilde{\mathbf{u}}_1\cdot\nabla\delta\tilde{c})\\[2mm]
				\quad\quad\quad\quad\quad-\dot{\Delta}_j(k_1\delta\tilde{\mathbf{u}}\cdot\nabla\tilde{c}_2):=I_5+R_3,
			\end{array}\right.
		\end{equation}
		we can get a result like (\ref{3.30}) that
		\begin{align}
			&\displaystyle\|(\delta\tilde{P},\delta\tilde{\mathbf{u}},\delta\tilde{c})\|^h_{\tilde{L}_t^\infty(\dot{B}_{2,1}^{\frac{d}{2}})}+\|(\delta\tilde{P},\delta\tilde{\mathbf{u}})\|^h_{\tilde{L}_t^1(\dot{B}_{2,1}^{\frac{d}{2}})}\notag\\[2mm]
			\displaystyle\lesssim&\Big(\|\partial_t(\kappa_2\!+\!\frac{1}{\kappa_1}\dot{S}_{j-1}M(\tilde{P}_1,\tilde{c}_1)\|_{\tilde{L}_t^\infty(L^\infty)}
			+\|\partial_t(\kappa_2+\kappa_1\dot{S}_{j-1}h(\tilde{P}_1,\tilde{c}_1))\|_{\tilde{L}_t^\infty(L^\infty)}\notag\\[2mm]
			&\displaystyle+\|{\rm div}\big(\kappa_1(\kappa_2\!+\!\frac{1}{\kappa_1}\dot{S}_{j-1}(M(\tilde{P}_1,\tilde{c}_1)))\dot{S}_{j-1}\tilde{\mathbf{u}}_1\big)\|_{\tilde{L}_t^\infty(L^\infty)}\notag\notag\\[2mm]
			&+\|{\rm div}\big(\kappa_1(\kappa_2\!+\!\kappa_1\dot{S}_{j-1}h(\tilde{P}_1,\tilde{c}_1))\dot{S}_{j-1}\tilde{\mathbf{u}}_1\big)\|_{\tilde{L}_t^\infty(L^\infty)}\notag\\[2mm]
			&\displaystyle+\|\nabla\big((\kappa_2+\kappa_1\dot{S}_{j-1}h(\tilde{P}_1,\tilde{c}_1))(\kappa_2+\frac{1}{\kappa_1}\dot{S}_{j-1}M(\tilde{P}_1,\tilde{c}_1)\big)\|_{\tilde{L}_t^\infty(L^\infty)}\Big)\|
			(\delta\tilde{\mathbf{u}},\delta\tilde{P})\|^h_{\tilde{L}_t^1(\dot{B}_{2,1}^{\frac{d}{2}})}\notag\\[2mm]
			&\displaystyle+\|{\rm div}\dot{S}_{j-1}\tilde{\mathbf{u}}_1\|_{\tilde{L}_t^1(L^\infty)}\|\delta\tilde{c}\|^h_{\tilde{L}_t^\infty(\dot{B}_{2,1}^{\frac{d}{2}})}\notag\\[2mm]
			&\displaystyle+\int_0^T\sum_{j\geq j_0}2^{j\frac{d}{2}}\big(\|I_1\|_{L^2}+\|I_2\|_{L^2}+\|I_3\|_{L^2}+\|I_4\|_{L^2}+\|I_5\|_{L^2}\notag\\[2mm]
			&+\|R_1\|_{L^2}+\|R_2\|_{L^2}+\|R_3\|_{L^2}\big).\label{4.23}
		\end{align}
		In Subsection 2.2, we have already gotten the results as
		\begin{align}
			&\|\partial_t(\kappa_2\!+\!\frac{1}{\kappa_1}\dot{S}_{j-1}M(\tilde{P}_1,\tilde{c}_1)\|_{\tilde{L}_t^\infty(L^\infty)}
			+\|\partial_t(\kappa_2+\kappa_1\dot{S}_{j-1}h(\tilde{P}_1,\tilde{c}_1))\|_{\tilde{L}_t^\infty(L^\infty)}\notag\\[3mm]
			&\displaystyle+\|{\rm div}\big(\kappa_1(\kappa_2\!+\!\frac{1}{\kappa_1}\dot{S}_{j-1}(M(\tilde{P}_1,\tilde{c}_1)))\dot{S}_{j-1}\tilde{\mathbf{u}}_1\big)\|_{\tilde{L}_t^\infty(L^\infty)}\notag\\[3mm]
			&+\|{\rm div}\big(\kappa_1(\kappa_2\!+\!\kappa_1\dot{S}_{j-1}h(\tilde{P}_1,\tilde{c}_1))\dot{S}_{j-1}\tilde{\mathbf{u}}_1\big)\|_{\tilde{L}_t^\infty(L^\infty)}\notag\\[3mm]
			&\displaystyle+\|\nabla\big((\kappa_2+\kappa_1\dot{S}_{j-1}h(\tilde{P}_1,\tilde{c}_1))(\kappa_2+\frac{1}{\kappa_1}\dot{S}_{j-1}M(\tilde{P}_1,\tilde{c}_1)\big)\|_{\tilde{L}_t^\infty(L^\infty)}\notag\\[3mm]
			&+\|{\rm div}\dot{S}_{j-1}\tilde{\mathbf{u}}_1\|_{\tilde{L}_t^1(L^\infty)}<\infty.
		\end{align}
		It just remains the terms $I_i$ $(i=1,\cdots,5)$ and $R_j$ ($j=1,2,3$) to be estimated. Here we only calculate three terms in detail, since the other terms can be estimated similarly. By picking $\sigma_1=1$, $\sigma_2=\frac{d}{p}+1$ and $\sigma_3=\frac{d}{p}-1$ in Lemma \ref{2 2.3},  we get
		\begin{align}
			&\int_{0}^{T}\sum_{j\ge j_0}2^{j\frac{d}{2}}\|I_1\|_{L^2}\notag\\
			\lesssim&\int_{0}^{T}\|{\rm div}\delta\tilde{\mathbf{u}}\|_{\dot{B}^{\frac{d}{p}}_{p,1}}\|h(\tilde{P}_1,\tilde{c}_1)\|^h_{\dot{B}^{\frac{d}{2}+1}_{2,1}}+\|{\rm div}\delta\tilde{\mathbf{u}}\|^l_{\dot{B}^{\frac{d}{p}}_{p,1}}\|\nabla h(\tilde{P}_1,\tilde{c}_1)\|^l_{\dot{B}^{\frac{d}{p}-\frac{d}{p^*}}_{p,1}}\notag\\
			&\ \ \ \ +\|{\rm div}\delta\tilde{\mathbf{u}}\|^l_{\dot{B}^{\frac{d}{p}-\frac{d}{p^*}}_{p,1}}\|h(\tilde{P}_1,\tilde{c}_1)\|^l_{\dot{B}^{\frac{d}{p}+1}_{p,1}}+\|\nabla h(\tilde{P}_1,\tilde{c}_1)\|_{\dot{B}^{\frac{d}{p}}_{p,1}}\|{\rm div}\delta\tilde{\mathbf{u}}\|^h_{\dot{B}^{\frac{d}{2}-1}_{2,1}}\notag\\
			\lesssim&\int_{0}^{T}(\|\tilde{P}_1\|^l_{\dot{B}^{\frac{d}{p}-1}_{p,1}}+\|\tilde{c}_1\|^l_{\dot{B}^{\frac{d}{2}-1}_{2,1}}
+\|(\tilde{P}_1,\tilde{c}_1)\|^h_{\dot{B}^{\frac{d}{2}+1}_{2,1}})(\|\delta\tilde{\mathbf{u}}\|^l_{\dot{B}^{\frac{d}{p}}_{p,1}}
+\|\delta\tilde{\mathbf{u}}\|^h_{\dot{B}^{\frac{d}{2}}_{2,1}}),
		\end{align}
		and $I_i$ ($i=2,3,4,5)$ can be estimated similarly. Then we arrive at the result that
		\begin{align}
			&\displaystyle\int_0^T\sum_{j\geq j_0}2^{j\frac{d}{2}}\big(\|I_1\|_{L^2}+\|I_2\|_{L^2}+\|I_3\|_{L^2}+\|I_4\|_{L^2}+\|I_5\|_{L^2})\notag\\
			\lesssim&\int_{0}^{T}(\|\tilde{P}_1\|^l_{\dot{B}^{\frac{d}{p}-1}_{p,1}}+\|\tilde{\mathbf{u}}_1\|^l_{\dot{B}^{\frac{d}{p}}_{p,1}}
+\|\tilde{c}_1\|^l_{\dot{B}^{\frac{d}{2}-1}_{2,1}}+\|(\tilde{P}_1,\tilde{\mathbf{u}}_1,\tilde{c}_1)\|^h_{\dot{B}^{\frac{d}{2}+1}_{2,1}})\notag\\
			&\ \ \ \ \cdot(\|\delta \tilde{P}\|^l_{\dot{B}^{\frac{d}{p}-1}_{p,1}}+\|\delta\tilde{\mathbf{u}}\|^l_{\dot{B}^{\frac{d}{p}}_{p,1}}
+\|\delta\tilde{c}\|^l_{\dot{B}^{\frac{d}{2}-1}_{2,1}}+\|(\delta\tilde{P},\delta\tilde{\mathbf{u}},\delta\tilde{c})\|^h_{\dot{B}^{\frac{d}{2}}_{2,1}}).\label{4.26}
		\end{align}
		Besides, we analysis some typical terms from $R_{1,2,3}$. Using Lemma \ref{A h} gives that
		\begin{align}
			&\|(h(\tilde{P}_1,\tilde{c}_1)-h(\tilde{P}_1,\tilde{c}_2)){\rm div}\tilde{\mathbf{u}}_2\|^h_{\dot{B}^{\frac{d}{2}}_{2,1}}\notag\\
			\lesssim&\|h(\tilde{P}_1,\tilde{c}_1)-h(\tilde{P}_1,\tilde{c}_2)\|_{\dot{B}^{\frac{d}{p}}_{p,1}}\|{\rm div}\tilde{\mathbf{u}}_2\|^h_{\dot{B}^{\frac{d}{2}}_{2,1}}+\|{\rm div}\tilde{\mathbf{u}}_2\|_{\dot{B}^{\frac{d}{p}}_{p,1}}\|h(\tilde{P}_1,\tilde{c}_1)-h(\tilde{P}_1,\tilde{c}_2)\|^h_{\dot{B}^{\frac{d}{2}}_{2,1}}\notag\\
			&+\|{\rm div}\tilde{\mathbf{u}}_2\|^l_{\dot{B}^{\frac{d}{p}}_{p,1}}\|h(\tilde{P}_1,\tilde{c}_1)-h(\tilde{P}_1,\tilde{c}_2)\|^l_{\dot{B}^{\frac{d}{p}}_{p,1}}
			\!+\!\|{\rm div}\tilde{\mathbf{u}}_2\|_{\dot{B}^{\frac{d}{p}}_{p,1}}\|h(\tilde{P}_1,\tilde{c}_1)-h(\tilde{P}_1,\tilde{c}_2)\|^l_{\dot{B}^{\frac{d}{p}}_{p,1}}.\label{4.27}
		\end{align}
		Thanks to Lemma \ref{A.F(m)} again, we acquire
		\begin{align}
			\|h(\tilde{P}_1,\tilde{c}_1)-h(\tilde{P}_1,\tilde{c}_2)\|_{\dot{B}^{\frac{d}{p}}_{p,1}}\lesssim(1+\|(\tilde{P}_1, \tilde{c}_1,\tilde{c}_2)\|_{\dot{B}^{\frac{d}{p}}_{p,1}})\|\delta\tilde{c}\|_{\dot{B}^{\frac{d}{p}}_{p,1}}.\label{4.28}
		\end{align}
		Besides, in order to bound the nonlinear term in the high frequency region, we introduce mean-value theorem for the derivatives and utilize the Lemma \ref{A h}, Lemma \ref{A.F(m)} again as well as Lemma \ref{A.13} to get
		\begin{align}
			&\|h(\tilde{P}_1,\tilde{c}_1)-h(\tilde{P}_1,\tilde{c}_2)\|^h_{\dot{B}^{\frac{d}{2}}_{2,1}}\notag\\
			=&\|(\partial_{\tilde{c}}h(\tilde{P}_1,\tilde{c}_2+\tau\delta\tilde{c})-\partial_{\tilde{c}}h(0,0)+\partial_{\tilde{c}}h(0,0))\delta\tilde{c}\|^h_{\dot{B}^{\frac{d}{2}}_{2,1}}\notag\\
\lesssim&\|(\partial_{\tilde{c}}h(\tilde{P}_1,\tilde{c}_2+\tau\delta\tilde{c})-\partial_{\tilde{c}}h(0,0))\delta\tilde{c}\|^h_{\dot{B}^{\frac{d}{2}}_{2,1}}+\|\partial_{\tilde{c}}h(0,0)\delta\tilde{c}\|^h_{\dot{B}^{\frac{d}{2}}_{2,1}}\notag\\
			\lesssim&\|\partial_{\tilde{c}}h(\tilde{P}_1,\tilde{c}_2+\tau\delta\tilde{c})-\partial_{\tilde{c}}h(0,0)\|_{\dot{B}^{\frac{d}{p}}_{p,1}}\|\delta\tilde{c}\|^h_{\dot{B}^{\frac{d}{2}}_{2,1}}\notag\\
&+\|\delta\tilde{c}\|_{\dot{B}^{\frac{d}{p}}_{p,1}}\|\partial_{\tilde{c}}h(\tilde{P}_1,\tilde{c}_2+\tau\delta\tilde{c})-\partial_{\tilde{c}}h(0,0)\|^h_{\dot{B}^{\frac{d}{2}}_{2,1}}\notag\\ &+\|\delta\tilde{c}\|^l_{\dot{B}^{\frac{d}{p}}_{p,1}}\|\partial_{\tilde{c}}h(\tilde{P}_1,\tilde{c}_2+\tau\delta\tilde{c})-\partial_{\tilde{c}}h(0,0)\|^l_{\dot{B}^{\frac{d}{p}}_{p,1}}\notag\\ &+\|\delta\tilde{c}\|_{\dot{B}^{\frac{d}{p}}_{p,1}}\|\partial_{\tilde{c}}h(\tilde{P}_1,\tilde{c}_2+\tau\delta\tilde{c})-\partial_{\tilde{c}}h(0,0)\|^l_{\dot{B}^{\frac{d}{p}}_{p,1}}+\|\delta\tilde{c}\|^h_{\dot{B}^{\frac{d}{2}}_{2,1}}\notag\\	
\lesssim&(1+\|\tilde{P}_1\|^l_{\dot{B}^{\frac{d}{p}-1}_{p,1}}+\|(\tilde{c}_1,\tilde{c}_2)\|^l_{\dot{B}^{\frac{d}{2}-1}_{2,1}}+\|(\tilde{P}_1,\tilde{c}_1,\tilde{c}_2)\|^h_{\dot{B}^{\frac{d}{2}+1}_{2,1}})(\|\delta\tilde{c}\|^l_{\dot{B}^{\frac{d}{p}-1}_{p,1}}+\|\delta\tilde{c}\|^h_{\dot{B}^{\frac{d}{2}}_{2,1}}),\label{4.29}
		\end{align}
		where $\tau\in(0,1)$. Hence, plugging (\ref{4.28}) and (\ref{4.29}) into (\ref{4.27}), we acquire
		\begin{align}
			&\|(h(\tilde{P}_1,\tilde{c}_1)-h(\tilde{P}_1,\tilde{c}_2)){\rm div}\tilde{\mathbf{u}}_2\|^h_{\dot{B}^{\frac{d}{2}}_{2,1}}\notag\\
			\lesssim&(1+\|\tilde{P}_1\|^l_{\dot{B}^{\frac{d}{p}-1}_{p,1}}+\|(\tilde{c}_1,\tilde{c}_2)\|^l_{\dot{B}^{\frac{d}{2}-1}_{2,1}}+\|(\tilde{P}_1,\tilde{c}_1,\tilde{c}_2)\|^h_{\dot{B}^{\frac{d}{2}+1}_{2,1}})\notag\\
			&(\|\tilde{\mathbf{u}}_2\|^l_{\dot{B}^{\frac{d}{p}}_{p,1}}+\|\tilde{\mathbf{u}}_2\|^h_{\dot{B}^{\frac{d}{2}+1}_{2,1}})(\|\delta\tilde{c}\|^l_{\dot{B}^{\frac{d}{p}-1}_{p,1}}+\|\delta\tilde{c}\|^h_{\dot{B}^{\frac{d}{2}}_{2,1}}).
		\end{align}
		Estimating the term $\|(h(\tilde{P}_1,\tilde{c}_2)-h(\tilde{P}_2,\tilde{c}_2)){\rm div}\tilde{\mathbf{u}}_2\|^h_{\dot{B}^{\frac{d}{2}}_{2,1}}$ is similar, we omit its proof here. Another term can be bounded by using Lemma \ref{A.F(m)} that
		\begin{align}
			\|\delta \tilde{\mathbf{u}}\cdot\nabla\tilde{P}_2\|^h_{\dot{B}^{\frac{d}{2}}_{2,1}}\lesssim&
			\|\delta \tilde{\mathbf{u}}\|_{\dot{B}^{\frac{d}{p}}_{p,1}}\|\nabla\tilde{P}_2\|^h_{\dot{B}^{\frac{d}{2}}_{2,1}}+\|\nabla\tilde{P}_2\|_{\dot{B}^{\frac{d}{p}}_{p,1}}\|\delta \tilde{\mathbf{u}}\|^h_{\dot{B}^{\frac{d}{2}}_{2,1}}\notag\\
			&+\|\nabla\tilde{P}_2\|^l_{\dot{B}^{\frac{d}{p}}_{p,1}}\|\delta \tilde{\mathbf{u}}\|^l_{\dot{B}^{\frac{d}{p}}_{p,1}}
			+\|\nabla\tilde{P}_2\|_{\dot{B}^{\frac{d}{p}}_{p,1}}\|\delta \tilde{\mathbf{u}}\|^l_{\dot{B}^{\frac{d}{p}}_{p,1}}\notag\\
			\lesssim&(\|\tilde{P}_2\|^l_{\dot{B}^{\frac{d}{p}-1}_{p,1}}+\|\tilde{P}_2\|^h_{\dot{B}^{\frac{d}{p}+1}_{p,1}})(\|\delta\tilde{\mathbf{u}}\|^l_{\dot{B}^{\frac{d}{p}}_{p,1}}+\|\delta\tilde{\mathbf{u}}\|^h_{\dot{B}^{\frac{d}{2}}_{2,1}}).
		\end{align}
		So, it holds that
		\begin{align}
			&\int_0^T\sum_{j\geq j_0}2^{j\frac{d}{2}}\big(\|R_1\|_{L^2}+\|R_2\|_{L^2}+\|R_3\|_{L^2}\big)\notag\\
			\lesssim&\int_0^T(\|(\tilde{P}_1,\tilde{P}_2)\|^l_{\dot{B}^{\frac{d}{p}-1}_{p,1}}+\|\tilde{\mathbf{u}_1}\|^l_{\dot{B}^{\frac{d}{p}}_{p,1}}
+\|(\tilde{c}_1,\tilde{c}_2)\|^l_{\dot{B}^{\frac{d}{2}-1}_{2,1}}+\|(\tilde{P}_1,\tilde{P}_2,\tilde{\mathbf{u}}_1,\tilde{c}_1,\tilde{c}_2)\|^h_{\dot{B}^{\frac{d}{2}}_{2,1}})\notag\\
			&\ \ \cdot(1+\|(\tilde{P}_2,\tilde{\mathbf{u}}_2)\|^h_{\dot{B}^{\frac{d}{2}+1}_{2,1}})(\|\delta\tilde{P}\|^l_{\dot{B}^{\frac{d}{p}-1}_{p,1}}
+\|\delta\tilde{\mathbf{u}}\|^l_{\dot{B}^{\frac{d}{p}}_{p,1}}+\|\delta\tilde{c}\|^l_{\dot{B}^{\frac{d}{2}-1}_{2,1}}
+\|(\delta\tilde{P},\delta\tilde{\mathbf{u}},\delta\tilde{c})\|^h_{\dot{B}^{\frac{d}{2}}_{2,1}}).\label{4.32}
		\end{align}
		Plugging (\ref{4.26}) and (\ref{4.32}) into (\ref{4.23}), one has
		\begin{align}
			&\|(\delta\tilde{P},\delta\tilde{\mathbf{u}},\delta\tilde{c})\|^h_{\tilde{L}_t^\infty(\dot{B}_{2,1}^{\frac{d}{2}})}+\|(\delta\tilde{P},\delta\tilde{\mathbf{u}})\|^h_{\tilde{L}_t^1(\dot{B}_{2,1}^{\frac{d}{2}})}\notag\\
			\lesssim&\int_0^T\Big(1+\|(\tilde{P}_1,\tilde{P}_2)\|^l_{\dot{B}^{\frac{d}{p}-1}_{p,1}}+\|(\tilde{\mathbf{u}}_1,\tilde{\mathbf{u}}_2)\|^l_{\dot{B}^{\frac{d}{p}}_{p,1}}+\|(\tilde{c}_1,\tilde{c}_2)\|^l_{\dot{B}^{\frac{d}{2}-1}_{2,1}}\notag\\
			&\ \ \ \ \ \ \ \ +\|(\tilde{P}_1,\tilde{P}_2,\tilde{\mathbf{u}}_1,\tilde{\mathbf{u}}_2,\tilde{c}_1,\tilde{c}_2)\|^h_{\dot{B}^{\frac{d}{2}}_{2,1}}\Big)^2\notag\\
			&\ \ \ \ \ \cdot\big(\|\delta\tilde{P}\|^l_{\dot{B}^{\frac{d}{p}-1}_{p,1}}+\|\delta\tilde{\mathbf{u}}\|^l_{\dot{B}^{\frac{d}{p}}_{p,1}}+\|\delta\tilde{c}\|^l_{\dot{B}^{\frac{d}{2}-1}_{2,1}}
+\|(\delta\tilde{P},\delta\tilde{\mathbf{u}},\delta\tilde{c})\|^h_{\dot{B}^{\frac{d}{2}}_{2,1}}\big).\label{4.33}
		\end{align}
		Combining (\ref{4.21}) and (\ref{4.33}), we eventually arrive at the conclusion that
		\begin{align}
			&\|\delta\tilde{P}\|^l_{\tilde{L}^\infty_{t}(\dot{B}^{\frac{d}{p}-1}_{p,1})}+\|\delta\tilde{P}\|^l_{L^1_{t}(\dot{B}^{\frac{d}{p}+1}_{p,1})}+\|\delta\tilde{\mathbf{u}}\|^l_{\tilde{L}^\infty_{t}(\dot{B}^{\frac{d}{p}}_{p,1})}+\|\delta\tilde{\mathbf{u}}\|^l_{L^1_{t}(\dot{B}^{\frac{d}{p}}_{p,1})}\notag\\
			&+\|\delta\tilde{c}\|^l_{\tilde{L}^\infty_{t}(\dot{B}^{\frac{d}{2}-1}_{2,1})}\|(\delta\tilde{P},\delta\tilde{\mathbf{u}},\delta\tilde{c})\|^h_{\tilde{L}_t^\infty(\dot{B}_{2,1}^{\frac{d}{2}})}+\|(\delta\tilde{P},\delta\tilde{\mathbf{u}})\|^h_{\tilde{L}_t^1(\dot{B}_{2,1}^{\frac{d}{2}})}\notag\\
			\lesssim&\int_0^T\Big(1+\|(\tilde{P}_1,\tilde{P}_2)\|^l_{\dot{B}^{\frac{d}{p}-1}_{p,1}}+\|(\tilde{\mathbf{u}}_1,\tilde{\mathbf{u}}_2)\|^l_{\dot{B}^{\frac{d}{p}}_{p,1}}
+\|(\tilde{c}_1,\tilde{c}_2)\|^l_{\dot{B}^{\frac{d}{2}-1}_{2,1}}\notag\\
			&\ \ \ \ \ \ \ \ +\|(\tilde{P}_1,\tilde{P}_2,\tilde{\mathbf{u}}_1,\tilde{\mathbf{u}}_2,\tilde{c}_1,\tilde{c}_2)\|^h_{\dot{B}^{\frac{d}{2}}_{2,1}}\Big)^2\notag\\
			&\ \ \ \ \cdot\big(\|\delta\tilde{P}\|^l_{\dot{B}^{\frac{d}{p}-1}_{p,1}}+\|\delta\tilde{\mathbf{u}}\|^l_{\dot{B}^{\frac{d}{p}}_{p,1}}
+\|\delta\tilde{c}\|^l_{\dot{B}^{\frac{d}{2}-1}_{2,1}}+\|(\delta\tilde{P},\delta\tilde{\mathbf{u}},\delta\tilde{c})\|^h_{\dot{B}^{\frac{d}{2}}_{2,1}}\big).
		\end{align}
		Note	
		\begin{align}
			&\|(\tilde{P}_1,\tilde{P}_2)\|^l_{\dot{B}^{\frac{d}{p}-1}_{p,1}}+\|(\tilde{\mathbf{u}}_1,\tilde{\mathbf{u}}_2)\|^l_{\dot{B}^{\frac{d}{p}}_{p,1}}
+\|(\tilde{c}_1,\tilde{c}_2)\|^l_{\dot{B}^{\frac{d}{2}-1}_{2,1}}
+\|(\tilde{P}_1,\tilde{P}_2,\tilde{\mathbf{u}}_1,\tilde{\mathbf{u}}_2,\tilde{c}_1,\tilde{c}_2)\|^h_{\dot{B}^{\frac{d}{2}}_{2,1}}\notag\\
			&<\infty.
		\end{align}
		Thus, Gronwall inequality yields  $(\tilde{P}_1,\tilde{\mathbf{u}}_1,\tilde{c}_1)=(\tilde{P}_2,\tilde{\mathbf{u}}_2,\tilde{c}_2)$ in the space (\ref{4.18}). On the other hand, by using the relation $\dot{B}^{\frac{d}{2}+1}_{2,1}\hookrightarrow\dot{B}^{\frac{d}{2}}_{2,1}$ in the high frequency, the uniqueness of solution is still true in the space
		\begin{align}
			\big\{(&\tilde{P},\tilde{\mathbf{u}},\tilde{c}) : \tilde{P}^l\in \mathcal{C}([0,T_*];\dot{B}^{\frac{d}{p}-1}_{p,1}), \tilde{\mathbf{u}}^l\in \mathcal{C}([0,T_*];\dot{B}^{\frac{d}{p}}_{p,1}), \tilde{c}^l\in \mathcal{C}([0,T_*];\dot{B}^{\frac{d}{2}-1}_{2,1}),\notag\\
			&(\tilde{P}^h, \tilde{\mathbf{u}}^h, \tilde{c}^h)\in \mathcal{C}([0,T_*];\dot{B}^{\frac{d}{2}+1}_{2,1})\big\}.
		\end{align}
\end{enumerate}
		\quad\quad Then, the proof is completed.\end{proof}

\section{Decay rate}
\quad\quad In this section, we shall follow the method used in \cite{Xin} to get the decay rate of the solutions constructed in the previous section. First, we bound the evolution of negative Besov norm, which is the main ingredient in deriving the Lyapunov-type inequality for energy norms.
\subsection{The regularity evolution of negative Besov norm}
\quad\quad In this subsection, we shall derive the following key lemma.
\begin{lemma}\label{5 1.1}
Let $2\leq p\leq4$ if $d=1$ or $2\leq p\leq \min\{4,\frac{2d}{d-2}\}$ if $d\geq 2$, and $(\tilde{P},\tilde{\mathbf{u}},\tilde{c})$ be the global solution to (\ref{1.5}) satisfying (\ref{1.6}). In addition to $\mathcal{X}_{p,0}\leq\eta_1$, assume further that (\ref{1.10}) holds. Then, we have
\begin{align}
\|(\tilde{P},\tilde{\mathbf{u}},\tilde{c})\|^l_{L_t^\infty \dot{B}_{p,\infty}^{-\sigma_1}}\leq \mathcal{D}_{p,0},\ \ (-\frac{d}{p}\leq-\sigma_1<\frac{d}{p}-1)\label{5.0}
\end{align}
with $\mathcal{D}_{p,0}=\|\tilde{P}_0,\tilde{\mathbf{u}}_0,\tilde{c}_0\|^l_{\dot{B}_{p,\infty}^{-\sigma_1}}+\mathcal{X}^2_{p,0}+\mathcal{X}^4_{p,0}.$
\end{lemma}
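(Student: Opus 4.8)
The plan is to propagate the low-frequency negative regularity $\dot{B}_{p,\infty}^{-\sigma_1}$ through the (effectively parabolic) structure of \eqref{1.5}, treating the nonlinear source terms $\tilde{G}_1,\tilde{G}_2$ and the convection in $(\ref{1.5})_3$ as small, time-integrable perturbations whose sizes have already been controlled by Theorem \ref{1 1.1} and \eqref{1.6}. As in Subsection 2.1 I would work from the reformulation \eqref{3.1}: on low frequencies $\tilde{P}$ solves the heat-type equation $\tilde{P}_t-\frac{\kappa_2^2}{\alpha}\Delta\tilde{P}=F_1$, the unknown $\mathbf{Z}=\kappa_2\nabla\tilde{P}+\alpha\tilde{\mathbf{u}}$ solves the damped equation $\frac{1}{\alpha}\mathbf{Z}_t+\mathbf{Z}=F_2$, and $\tilde{c}$ solves the transport equation $(\ref{1.5})_3$. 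All estimates below are performed only on the blocks $\dot{\Delta}_j$ with $j\le j_0$, so that polynomial losses in frequency are harmless.

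For the parabolic part $(\tilde{P},\mathbf{Z})$: applying $\dot{\Delta}_j$ to $(\ref{3.1})_1$ and $(\ref{3.1})_2$ and running the same $L^p$ energy/duality argument that produced \eqref{3.2}--\eqref{3.6} — keeping the smoothing kernel $e^{-c2^{2j}(t-s)}$ for $\tilde{P}$, the decay kernel $e^{-\alpha(t-s)}$ for $\mathbf{Z}$, and choosing $j_0$ so that $2\cdot2^{2j}\le2^{4j_0}<\tfrac12$ — then multiplying by $2^{-j\sigma_1}$ and taking $\sup_{j\le j_0}$ yields
\[
\|(\tilde{P},\tilde{\mathbf{u}})\|^l_{L_t^\infty(\dot{B}_{p,\infty}^{-\sigma_1})}\lesssim\|(\tilde{P}_0,\tilde{\mathbf{u}}_0)\|^l_{\dot{B}_{p,\infty}^{-\sigma_1}}+\|\tilde{G}_1\|^l_{L_t^1(\dot{B}_{p,\infty}^{-\sigma_1})}+\|\tilde{G}_2\|^l_{L_t^1(\dot{B}_{p,\infty}^{-\sigma_1})},
\]
where the extra derivative from the heat kernel absorbs the $\mathrm{div}\,\mathbf{Z}$ term of $F_1$ exactly as in \eqref{3.2} and $\sup_{j\le j_0}\int_0^t e^{-c2^{2j}(t-s)}\,ds\lesssim1$. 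For $\tilde{c}$ I would localize $(\ref{1.5})_3$, pair the $\dot{\Delta}_j$-equation with $|\dot{\Delta}_j\tilde{c}|^{p-2}\dot{\Delta}_j\tilde{c}$, use the commutator $[\dot{\Delta}_j,\tilde{\mathbf{u}}\cdot\nabla]\tilde{c}$ together with the $\mathrm{div}\,\tilde{\mathbf{u}}$ term from integration by parts (mirroring \eqref{3.12}--\eqref{3.15}), and close by Gronwall to get $\|\tilde{c}\|^l_{L_t^\infty(\dot{B}_{p,\infty}^{-\sigma_1})}\lesssim e^{C\mathcal{X}_{p,0}}\|\tilde{c}_0\|^l_{\dot{B}_{p,\infty}^{-\sigma_1}}\lesssim\|\tilde{c}_0\|^l_{\dot{B}_{p,\infty}^{-\sigma_1}}$. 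The constraint $-\frac{d}{p}\le-\sigma_1<\frac{d}{p}-1$ is precisely what legitimizes the commutator estimate in $\dot{B}_{p,\infty}^{-\sigma_1}$ (for $p\ge2$ one has $-\sigma_1\ge-\frac{d}{p}\ge-\frac{d}{p'}$) and the product estimates used next.

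It then remains to bound $\|\tilde{G}_1\|^l_{L_t^1(\dot{B}_{p,\infty}^{-\sigma_1})}$ and $\|\tilde{G}_2\|^l_{L_t^1(\dot{B}_{p,\infty}^{-\sigma_1})}$. Every summand of $\tilde{G}_1,\tilde{G}_2$ is a product of one factor lying in $L_t^1$ of a positive-regularity space ($\tilde{\mathbf{u}}\in L_t^1(\dot{B}_{p,1}^{d/p})$, $\nabla\tilde{P}\in L_t^1(\dot{B}_{p,1}^{d/p})$, etc.) and one factor lying in $L_t^\infty$; using a paraproduct estimate of the form $\|fg\|^l_{\dot{B}_{p,\infty}^{-\sigma_1}}\lesssim\|f\|_{\dot{B}_{p,1}^{d/p}}\|g\|^l_{\dot{B}_{p,\infty}^{-\sigma_1}}$ (endpoint $\sigma_1=d/p$ handled via $\dot{B}_{p,1}^{d/p}\hookrightarrow L^\infty$ and the low-frequency cutoff), together with the composition estimates of Lemma \ref{2 2.7}-type for $h(\tilde{P},\tilde{c})$ and $M(\tilde{P},\tilde{c})=\frac{1}{m}-\frac{1}{m_\infty}$ — split, as in \eqref{3.44}, into their linear parts plus quadratic remainders $\tilde{M}(\tilde{P},\tilde{c})\tilde{P}\tilde{c}$ — one obtains $\|\tilde{G}_1\|^l_{L_t^1(\dot{B}_{p,\infty}^{-\sigma_1})}+\|\tilde{G}_2\|^l_{L_t^1(\dot{B}_{p,\infty}^{-\sigma_1})}\lesssim\mathcal{X}_{p,0}\|(\tilde{P},\tilde{\mathbf{u}},\tilde{c})\|^l_{L_t^\infty(\dot{B}_{p,\infty}^{-\sigma_1})}+\mathcal{X}_{p,0}^2+\mathcal{X}_{p,0}^4$. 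Collecting the three bounds and absorbing $\mathcal{X}_{p,0}\|(\tilde{P},\tilde{\mathbf{u}},\tilde{c})\|^l_{L_t^\infty(\dot{B}_{p,\infty}^{-\sigma_1})}$ into the left-hand side (legitimate since $\mathcal{X}_{p,0}\le\eta_1\ll1$) yields \eqref{5.0} with $\mathcal{D}_{p,0}=\|(\tilde{P}_0,\tilde{\mathbf{u}}_0,\tilde{c}_0)\|^l_{\dot{B}_{p,\infty}^{-\sigma_1}}+\mathcal{X}_{p,0}^2+\mathcal{X}_{p,0}^4$.

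The main obstacle is the non-dissipative variable $\tilde{c}$: having no smoothing, its negative norm is propagated only by the transport estimate, and the products in $\tilde{G}_1$ (through $h(\tilde{P},\tilde{c})$) and in $M(\tilde{P},\tilde{c})$ must be controlled using only the $L_t^\infty$-control of $\tilde{c}$ in $\dot{B}_{p,\infty}^{-\sigma_1}$ and its positive $L^2$-type bound from \eqref{1.6}, with no gain of regularity; the quadratic remainder $\tilde{M}\tilde{P}\tilde{c}$ is exactly what forces the cubic/quartic contributions $\mathcal{X}_{p,0}^2+\mathcal{X}_{p,0}^4$ in $\mathcal{D}_{p,0}$, and the endpoint $\sigma_1=d/p$ in the commutator and product estimates must be treated carefully via the sharp embedding $\dot{B}_{p,1}^{d/p}\hookrightarrow L^\infty$.
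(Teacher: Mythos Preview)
Your proposal is correct and follows essentially the same route as the paper's own proof in Subsection~5.1: reformulate via $(\tilde{P},\mathbf{Z})$ as in \eqref{3.1}, run the heat/damped linear estimates of Lemmas~\ref{2 2.1}--\ref{2 2.2} at regularity $-\sigma_1$ together with the transport estimate for $\tilde{c}$, bound $\tilde{G}_1,\tilde{G}_2$ through the product law of Lemma~\ref{2 2.7} (splitting $M=M'_{\tilde{P}}(0)\tilde{P}+M'_{\tilde{c}}(0)\tilde{c}+\tilde{M}\tilde{P}\tilde{c}$ to isolate the quartic contribution), and absorb the resulting $\mathcal{X}_{p,0}\,\|(\tilde{P},\tilde{\mathbf{u}},\tilde{c})\|^l_{\dot{B}_{p,\infty}^{-\sigma_1}}$ terms into the left-hand side. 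One inessential caveat: your aside that $\sup_{j\le j_0}\int_0^t e^{-c2^{2j}(t-s)}\,ds\lesssim1$ is false (the integral is $\sim 2^{-2j}$ and blows up as $j\to-\infty$), but you do not actually need it---passing from an $L_t^1$ source to an $L_t^\infty$ bound only uses $e^{-c2^{2j}(t-s)}\le1$, and the $\mathrm{div}\,\mathbf{Z}$ term is handled exactly as in \eqref{3.2}--\eqref{3.6} by the $L_t^1$ control of $\mathbf{Z}$ coming from the damped equation.
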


As the same procedure in the priori estimate, we have
\begin{align}
&\|\tilde{P}\|^l_{L_t^\infty \dot{B}_{p,\infty}^{-\sigma_1}}+\|\tilde{P}\|^l_{L_t^1\dot{B}_{p,\infty}^{-\sigma_1+2}}+\|\mathbf{Z}\|^l_{L_t^\infty \dot{B}_{p,\infty}^{-\sigma_1}}+\|\mathbf{Z}\|^l_{L_t^1\dot{B}_{p,\infty}^{-\sigma_1}}+\|\tilde{c}\|^l_{L_t^\infty \dot{B}_{p,\infty}^{-\sigma_1}}\notag\\
\lesssim &\|(\tilde{P}_0,\mathbf{Z}_0,\tilde{c}_0)\|^l_{\dot{B}_{p,\infty}^{-\sigma_1}}+\|(\tilde{G}_1,\tilde{G}_2)\|^l_{L_t^1\dot{B}_{p,\infty}^{-\sigma_1}}
+\int_0^T\|\nabla\tilde{\mathbf{u}}\|_{L^\infty}\|\tilde{c}\|_{\dot{B}_{2,\infty}^{-\sigma_1}}+\int_0^T\|{\rm div}\tilde{\mathbf{u}}\|_{L^\infty}\|\tilde{c}\|_{\dot{B}_{2,\infty}^{-\sigma_1}}\notag\\
\lesssim &\|(\tilde{P}_0,\mathbf{\mathbf{Z}}_0,\tilde{c}_0)\|^l_{\dot{B}_{p,\infty}^{-\sigma_1}}+\|(\tilde{G}_1,\tilde{G}_2)\|^l_{L_t^1\dot{B}_{p,\infty}^{-\sigma_1}}
+\mathcal{X}_p(t)\|\tilde{c}\|_{L_t^\infty\dot{B}_{2,\infty}^{-\sigma_1}}.\label{5.1}
\end{align}
Then, we shall bound $\|\tilde{G}_1\|^l_{L_t^1\dot{B}_{p,\infty}^{-\sigma_1}}$ and $\|\tilde{G}_2\|^l_{L_t^1\dot{B}_{p,\infty}^{-\sigma_1}}$. In fact,
\begin{align}
\|\tilde{G}_1&\|^l_{L_t^1\dot{B}_{p,\infty}^{-\sigma_1}}\lesssim\|h(\tilde{P},\tilde{c}){\rm div}\tilde{\mathbf{u}}\|^l_{L_t^1\dot{B}_{p,\infty}^{-\sigma_1}}+\|\tilde{\mathbf{u}}\cdot\nabla\tilde{P}\|^l_{L_t^1\dot{B}_{p,\infty}^{-\sigma_1}}\notag\\
&\lesssim\|(\tilde{P},\tilde{c})\|_{L_t^\infty\dot{B}_{p,1}^{\frac{d}{p}}}\|{\rm div}\tilde{\mathbf{u}}\|_{L_t^1\dot{B}_{p,\infty}^{-\sigma_1}}
+\|\tilde{\mathbf{u}}\|_{L_t^1\dot{B}_{p,1}^{\frac{d}{p}}}\|\nabla\tilde{P}\|_{L_t^\infty\dot{B}_{p,\infty}^{-\sigma_1}}\notag\\ &\lesssim\|(\tilde{P},\tilde{c})\|_{L_t^\infty\dot{B}_{p,1}^{\frac{d}{p}}}(\|\tilde{\mathbf{u}}\|^l_{L_t^1\dot{B}_{p,\infty}^{-\sigma_1}}
+\|\tilde{\mathbf{u}}\|^h_{L_t^\infty\dot{B}_{2,1}^{\frac{d}{2}+1}})
+\|\tilde{\mathbf{u}}\|_{L_t^1\dot{B}_{p,1}^{\frac{d}{p}}}(\|\tilde{P}\|^l_{L_t^\infty\dot{B}_{p,\infty}^{-\sigma_1}}
+\|\tilde{P}\|^h_{L_t^\infty\dot{B}_{2,1}^{\frac{d}{2}+1}})\notag\\
&\lesssim\mathcal{X}^2_p(t)+\mathcal{X}_p(t)(\|\tilde{P}\|^l_{L_t^\infty\dot{B}_{p,\infty}^{-\sigma_1}}
+\|\tilde{\mathbf{u}}\|^l_{L_t^\infty\dot{B}_{p,\infty}^{-\sigma_1}}).\label{5.2}
\end{align}
From the relation $\frac{1}{m}-\frac{1}{m_\infty}=M'_{\tilde{P}}(0)\tilde{P}+M'_{\tilde{c}}(0)\tilde{c}+\tilde{M}(\tilde{P},\tilde{c})\tilde{P}\tilde{c}$, we have
\begin{align}
\|\tilde{G}_2\|&^l_{L_t^1\dot{B}_{p,\infty}^{-\sigma_1}}\lesssim\|\tilde{\mathbf{u}}\cdot\nabla\tilde{\mathbf{u}}\|^l_{L_t^1\dot{B}_{p,\infty}^{-\sigma_1}}
+\|(\frac{1}{m}-\frac{1}{m_\infty})\nabla\tilde{P}\|^l_{L_t^1\dot{B}_{p,\infty}^{-\sigma_1}}\notag\\
&\lesssim\|\tilde{\mathbf{u}}\|_{L_t^1\dot{B}_{p,1}^{\frac{d}{p}}}(\|\tilde{\mathbf{u}}\|^l_{L_t^\infty\dot{B}_{p,\infty}^{-\sigma_1}}
+\|\tilde{\mathbf{u}}\|^h_{L_t^\infty\dot{B}_{2,1}^{\frac{d}{2}+1}})+\|(M'_{\tilde{P}}(0)\tilde{P}+M'_{\tilde{c}}(0)\tilde{c}
+\tilde{M}(\tilde{P},\tilde{c})\tilde{P}\tilde{c})\nabla\tilde{P}\|^l_{L_t^1\dot{B}_{p,\infty}^{-\sigma_1}}\notag\\
&\lesssim\|\tilde{\mathbf{u}}\|_{L_t^1\dot{B}_{p,1}^{\frac{d}{p}}}(\|\tilde{\mathbf{u}}\|^l_{L_t^\infty\dot{B}_{p,\infty}^{-\sigma_1}}
+\|\tilde{\mathbf{u}}\|^h_{L_t^\infty\dot{B}_{2,1}^{\frac{d}{2}+1}})+\|\tilde{P}\nabla\tilde{P}\|^l_{L_t^1\dot{B}_{p,\infty}^{-\sigma_1}}
+\|\tilde{c}\nabla\tilde{P}\|^l_{L_t^1\dot{B}_{p,\infty}^{-\sigma_1}}\notag\\
&\quad+\|\tilde{M}(\tilde{P},\tilde{c})\tilde{P}\tilde{c}\nabla\tilde{P}\|^l_{L_t^1\dot{B}_{p,\infty}^{-\sigma_1}}\notag\\
&\lesssim\|\tilde{\mathbf{u}}\|_{L_t^1\dot{B}_{p,1}^{\frac{d}{p}}}(\|\tilde{\mathbf{u}}\|^l_{L_t^\infty\dot{B}_{p,\infty}^{-\sigma_1}}
+\|\tilde{\mathbf{u}}\|^h_{L_t^\infty\dot{B}_{2,1}^{\frac{d}{2}+1}})+\|\tilde{P}\|_{L_t^\infty\dot{B}_{p,\infty}^{-\sigma_1}}\|\nabla\tilde{P}\|_{L_t^1\dot{B}_{p,1}^{\frac{d}{p}}}\notag\\
&\quad+\|\nabla \tilde{P}\|_{L_t^1\dot{B}_{p,1}^{\frac{d}{p}}}\|\tilde{c}\|_{L_t^\infty\dot{B}_{p,\infty}^{-\sigma_1}}
+\|\tilde{M}(\tilde{P},\tilde{c})\|_{L_t^\infty\dot{B}_{p,1}^{\frac{d}{p}}}\|\tilde{P}\|_{L_t^\infty\dot{B}_{p,\infty}^{-\sigma_1}}\|\tilde{c}\|_{L_t^\infty\dot{B}_{p,1}^{\frac{d}{p}}}\|\nabla\tilde{P}\|_{L_t^1\dot{B}_{p,1}^{\frac{d}{p}}}\notag\\
&\lesssim\mathcal{X}^2_p(t)\!+\!\mathcal{X}^4_p(t)\!+\!\mathcal{X}_p(t)(\|\tilde{\mathbf{u}}\|^l_{L_t^\infty\dot{B}_{p,\infty}^{-\sigma_1}}
+\|\tilde{c}\|^l_{L_t^\infty\dot{B}_{p,\infty}^{-\sigma_1}})+(\mathcal{X}_p(t)\!+\!\mathcal{X}^3_p(t))\|\tilde{P}\|^l_{L_t^\infty\dot{B}_{p,\infty}^{-\sigma_1}},\label{5.3}
\end{align}
and
\begin{align}
&\|\tilde{P}\|^l_{L_t^\infty \dot{B}_{p,\infty}^{-\sigma_1}}+\|\tilde{P}\|^l_{L_t^1\dot{B}_{p,\infty}^{-\sigma_1+2}}+\|\mathbf{Z}\|^l_{L_t^\infty \dot{B}_{p,\infty}^{-\sigma_1}}+\|\mathbf{Z}\|^l_{L_t^1\dot{B}_{p,\infty}^{-\sigma_1}}+\|\tilde{c}\|^l_{L_t^\infty \dot{B}_{p,\infty}^{-\sigma_1}}\notag\\
\lesssim& \|(\tilde{P}_0,\mathbf{Z}_0,\tilde{c})\|^l_{\dot{B}_{p,\infty}^{-\sigma_1}}+\mathcal{X}^2_p(t)\!+\!\mathcal{X}^4_p(t)\!+\!\mathcal{X}_p(t)(\|\tilde{\mathbf{u}}\|^l_{L_t^\infty\dot{B}_{p,\infty}^{-\sigma_1}}
+\|\tilde{c}\|^l_{L_t^\infty\dot{B}_{p,\infty}^{-\sigma_1}})\notag\\
&+(\mathcal{X}_p(t)\!+\!\mathcal{X}^3_p(t))\|\tilde{P}\|^l_{L_t^\infty\dot{B}_{p,\infty}^{-\sigma_1}}.\label{5.4}
\end{align}
Noting that $\mathbf{Z}=\kappa_2\nabla \tilde{P}+\alpha\tilde{\mathbf{u}}$, which gives
\begin{align}
&\|\tilde{P}\|^l_{L_t^\infty \dot{B}_{p,\infty}^{-\sigma_1}}+\|\tilde{P}\|^l_{L_t^1\dot{B}_{p,\infty}^{-\sigma_1+2}}+\|\tilde{\mathbf{u}}\|^l_{L_t^\infty \dot{B}_{p,\infty}^{-\sigma_1}}+\|\tilde{\mathbf{u}}\|^l_{L_t^1\dot{B}_{p,\infty}^{-\sigma_1}}+\|\tilde{c}\|^l_{L_t^\infty \dot{B}_{p,\infty}^{-\sigma_1}}\notag\\
\lesssim& \|(\tilde{P}_0,\tilde{\mathbf{u}}_0,\tilde{c})\|^l_{\dot{B}_{p,\infty}^{-\sigma_1}}+\mathcal{X}^2_p(t)\!+\!\mathcal{X}^4_p(t),\label{5.5}
\end{align}
and hence,
\begin{align}
\|(\tilde{P},\tilde{\mathbf{u}},\tilde{c})\|^l_{L_t^\infty \dot{B}_{p,\infty}^{-\sigma_1}}\leq \mathcal{D}_{p,0}.\label{5.6}
\end{align}
Consequently, we complete the proof of Lemma \ref{5 1.1}.
\subsection{The proof of Theorem \ref{1 1.2}}
\quad\quad This section is devoted to proving Theorem \ref{1 1.2}. From Section 2, we can get the following inequality (see the derivation of \eqref{3.11}, \eqref{3.16} and \eqref{3.49} for more details):
\begin{align}
&\frac{d}{dt}(\|\tilde{P}\|_{\dot{B}_{p,1}^{\frac{d}{p}-1}}^l+\|\tilde{\mathbf{u}}\|_{\dot{B}_{p,1}^{\frac{d}{p}}}^l+\|(\tilde{P},\tilde{\mathbf{u}})\|_{\dot{B}_{2,1}^{\frac{d}{2}+1}}^h)
+\|\tilde{P}\|_{\dot{B}_{p,1}^{\frac{d}{p}+1}}^l+\|\tilde{\mathbf{u}}\|_{\dot{B}_{p,1}^{\frac{d}{p}}}^l+\|(\tilde{P},\tilde{\mathbf{u}})\|_{\dot{B}_{2,1}^{\frac{d}{2}+1}}^h\notag\\
&\quad\leq
(1+\|(\tilde{P},\tilde{c})\|_{\dot{B}_{p,1}^{\frac{d}{p}-1}}^l+\|\tilde{\mathbf{u}}\|_{\dot{B}_{p,1}^{\frac{d}{p}}}^l+\|(\tilde{P},\tilde{\mathbf{u}},\tilde{c})\|_{\dot{B}_{2,1}^{\frac{d}{2}+1}}^h)^2\notag\\
&\quad\quad\cdot(\|(\tilde{P},\tilde{c})\|_{\dot{B}_{p,1}^{\frac{d}{p}-1}}^l+\|\tilde{\mathbf{u}}\|_{\dot{B}_{p,1}^{\frac{d}{p}}}^l+\|(\tilde{P},\tilde{\mathbf{u}},\tilde{c})\|_{\dot{B}_{2,1}^{\frac{d}{2}+1}}^h)\notag\\
&\quad\quad\cdot(\|\tilde{P}\|_{\dot{B}_{p,1}^{\frac{d}{p}+1}}^l+\|\tilde{\mathbf{u}}\|_{\dot{B}_{p,1}^{\frac{d}{p}}}^l+\|(\tilde{P},\tilde{\mathbf{u}})\|_{\dot{B}_{2,1}^{\frac{d}{2}+1}}^h),\label{5.7}
\end{align}
By the proof of the global existence of the system (\ref{1.5}), the following estimate holds:
\begin{align}
\|(\tilde{P},\tilde{c})\|_{\dot{B}_{p,1}^{\frac{d}{p}-1}}^l+\|\tilde{\mathbf{u}}\|_{\dot{B}_{p,1}^{\frac{d}{p}}}^l+\|(\tilde{P},\tilde{\mathbf{u}},\tilde{c})\|_{\dot{B}_{2,1}^{\frac{d}{2}+1}}^h\leq c_0\ll 1,\label{5.8}
\end{align}
thus, we can infer from (\ref{5.7}) that
\begin{align}
\frac{d}{dt}(\|\tilde{P}\|_{\dot{B}_{p,1}^{\frac{d}{p}-1}}^l+\|\tilde{\mathbf{u}}\|_{\dot{B}_{p,1}^{\frac{d}{p}}}^l+\|(\tilde{P},\tilde{\mathbf{u}})\|_{\dot{B}_{2,1}^{\frac{d}{2}+1}}^h)
+\bar{c}(\|\tilde{P}\|_{\dot{B}_{p,1}^{\frac{d}{p}+1}}^l+\|\tilde{\mathbf{u}}\|_{\dot{B}_{p,1}^{\frac{d}{p}}}^l+\|(\tilde{P},\tilde{\mathbf{u}})\|_{\dot{B}_{2,1}^{\frac{d}{2}+1}}^h)
\leq 0.\label{5.9}
\end{align}

In what follows, we using interpolation inequalities to get the desired time-decay estimates. On the one hand, owing to $-\sigma_1<\frac{d}{p}-1<\frac{d}{p}+1$, we get
\begin{align}
\|\tilde{P}\|_{\dot{B}_{p,1}^{\frac{d}{p}-1}}^l\lesssim(\|\tilde{P}\|_{\dot{B}_{p,\infty}^{-\sigma_1}}^l)^{\eta_1}(\|\tilde{P}\|_{\dot{B}_{p,1}^{\frac{d}{p}+1}}^l)^{1-\eta_1},\label{5.10}
\end{align}
where $\eta_1=\frac{2}{\frac{d}{p}+1+\sigma_1}\in(0,1)$. By virtue of (\ref{5.0}), one has
\begin{align}
\|\tilde{P}\|_{\dot{B}_{p,1}^{\frac{d}{p}+1}}^l\geq C(\|\tilde{P}\|_{\dot{B}_{p,1}^{\frac{d}{p}-1}}^l)^{\frac{1}{1-\eta_1}}.\label{5.11}
\end{align}
On the other hand, from (\ref{5.8}), it obvious that
\begin{align}
\|\tilde{\mathbf{u}}\|_{\dot{B}_{p,1}^{\frac{d}{p}}}^l\geq C(\|\tilde{\mathbf{u}}\|_{\dot{B}_{p,1}^{\frac{d}{p}}}^l)^{\frac{1}{1-\eta_1}},
\|(\tilde{P},\tilde{\mathbf{u}})\|_{\dot{B}_{2,1}^{\frac{d}{2}+1}}^h\geq C(\|(\tilde{P},\tilde{\mathbf{u}})\|_{\dot{B}_{2,1}^{\frac{d}{2}+1}}^h)^{\frac{1}{1-\eta_1}}.\label{5.12}
\end{align}
Combining (\ref{5.9}), (\ref{5.11}) and (\ref{5.12}), we conclude that there exists a positive constant $\tilde{c}_0$ such that the following Lyapunov-type inequality holds
\begin{align}
&\frac{d}{dt}(\|\tilde{P}\|_{\dot{B}_{p,1}^{\frac{d}{p}-1}}^l+\|\tilde{\mathbf{u}}\|_{\dot{B}_{p,1}^{\frac{d}{p}}}^l+\|(\tilde{P},\tilde{\mathbf{u}})\|_{\dot{B}_{2,1}^{\frac{d}{2}+1}}^h)\notag\\
&+\tilde{c}_0(\|\tilde{P}\|_{\dot{B}_{p,1}^{\frac{d}{p}-1}}^l+\|\tilde{\mathbf{u}}\|_{\dot{B}_{p,1}^{\frac{d}{p}}}^l+\|(\tilde{P},\tilde{\mathbf{u}})\|_{\dot{B}_{2,1}^{\frac{d}{2}+1}}^h)^{1+\frac{2}{\frac{d}{p}-1+\sigma_1}}
\leq 0.
\end{align}
By solving the above differential inequality, we obtain
\begin{align}
\|\tilde{P}\|_{\dot{B}_{p,1}^{\frac{d}{p}-1}}^l+\|\tilde{\mathbf{u}}\|_{\dot{B}_{p,1}^{\frac{d}{p}}}^l+\|(\tilde{P},\tilde{\mathbf{u}})\|_{\dot{B}_{2,1}^{\frac{d}{2}+1}}^h\leq C(1+t)^{-\frac{\frac{d}{p}-1+\sigma_1}{2}}.\label{5.14}
\end{align}
In addition, if $-\sigma_1<\sigma<\frac{d}{p}-1$, then by employing interpolation inequality, we have
\begin{align}
\|\tilde{P}\|_{\dot{B}_{p,1}^\sigma}^l\lesssim(\|\tilde{P}\|_{\dot{B}_{p,\infty}^{-\sigma_1}}^l)^{\eta_2}(\|\tilde{P}\|_{\dot{B}_{p,1}^{\frac{d}{p}-1}}^l)^{1-\eta_2},\ \eta_2=\frac{\frac{d}{p}-1-\sigma}{\frac{d}{p}-1+\sigma_1},\label{5.15}
\end{align}
which together with (\ref{5.0}) gives
\begin{align}
\|\tilde{P}\|_{\dot{B}_{p,1}^\sigma}^l\lesssim(1+t)^{-\frac{\sigma_1+\sigma}{2}}.\label{5.16}
\end{align}
Similarly, if $-\sigma_1<\sigma<\frac{d}{p}$, then employing interpolation inequality gives that
\begin{align}
\|\tilde{\mathbf{u}}\|_{\dot{B}_{p,1}^\sigma}^l\lesssim(\|\tilde{\mathbf{u}}\|_{\dot{B}_{p,\infty}^{-\sigma_1}}^l)^{\eta_3}(\|\tilde{\mathbf{u}}\|_{\dot{B}_{p,1}^{\frac{d}{p}}}^l)^{1-\eta_3},\ \eta_3=\frac{\frac{d}{p}-\sigma}{\frac{d}{p}+\sigma_1}.\label{5.17}
\end{align}
It combines with (\ref{5.0}) implies that
\begin{align}
\|\tilde{\mathbf{u}}\|_{\dot{B}_{p,1}^\sigma}^l\lesssim(1+t)^{-\frac{\frac{d}{p}-1+\sigma_1}{\frac{d}{p}+\sigma_1}\frac{\sigma_1+\sigma}{2}}.\label{5.18}
\end{align}
Next, we shall improve the decay of $\mathbf{u}$. In fact, from $(\ref{1.5})_2$, we can get the following inequality by directly calculate:
\begin{align}
\|\tilde{\mathbf{u}}&\|_{\dot{B}_{p,1}^\sigma}^l\lesssim e^{-\alpha t}\|\tilde{\mathbf{u}}_0\|_{\dot{B}_{p,1}^\sigma}^l+\int_0^t e^{-\alpha(t-\tau)}
(\|\nabla\tilde{P}\|_{\dot{B}_{p,1}^\sigma}^l+\|\tilde{G}_2\|_{\dot{B}_{p,1}^\sigma}^l)d\tau\notag\\
&\lesssim e^{-\alpha t}\|\tilde{\mathbf{u}}_0\|_{\dot{B}_{p,1}^\sigma}^l+\int_0^t e^{-\alpha(t-\tau)}(\|\nabla\tilde{P}\|_{\dot{B}_{p,1}^\sigma}^l+\|\tilde{\mathbf{u}}\|_{\dot{B}_{p,1}^{\frac{d}{p}}}\|\nabla\tilde{\mathbf{u}}\|_{\dot{B}_{p,1}^\sigma}
+\|(\tilde{P},\tilde{c})\|_{\dot{B}_{p,1}^{\frac{d}{p}}}\|\nabla\tilde{P}\|_{\dot{B}_{p,1}^\sigma})\notag\\
&\lesssim e^{-\alpha t}\|\tilde{\mathbf{u}}_0\|_{\dot{B}_{p,1}^\sigma}^l+\int_0^t e^{-\alpha(t-\tau)}\big[\|\tilde{P}\|_{\dot{B}_{p,1}^{\sigma+1}}^l
+(\|\tilde{\mathbf{u}}\|^l_{\dot{B}_{p,1}^{\frac{d}{p}}}+\|\tilde{\mathbf{u}}\|^h_{\dot{B}_{2,1}^{\frac{d}{2}+1}})(\|\tilde{\mathbf{u}}\|_{\dot{B}_{p,1}^{\sigma+1}}^l+\|\tilde{\mathbf{u}}\|_{\dot{B}_{2,1}^{\frac{d}{2}+1}}^h)\notag\\
&\ \ \ \ \ +(\|\tilde{P}\|^l_{\dot{B}_{p,1}^{\frac{d}{p}}}+\|\tilde{P}\|^h_{\dot{B}_{2,1}^{\frac{d}{2}+1}})(\|\tilde{P}\|_{\dot{B}_{p,1}^{\sigma+1}}^l+\|\tilde{P}\|_{\dot{B}_{2,1}^{\frac{d}{2}+1}}^h)
+C(\|\tilde{P}\|_{\dot{B}_{p,1}^{\sigma+1}}^l+\|\tilde{P}\|_{\dot{B}_{2,1}^{\frac{d}{2}+1}}^h)\big].\label{5.19}
\end{align}
To use the corresponding estimate \eqref{5.14} and \eqref{5.16}, we need $\sigma+1\leq\frac{d}{p}-1$. Then we have
\begin{align}
\|\tilde{\mathbf{u}}\|_{\dot{B}_{p,1}^\sigma}^l&\lesssim e^{-\alpha t}\|\tilde{\mathbf{u}}_0\|_{\dot{B}_{p,1}^\sigma}^l+\int_0^t e^{-\alpha(t-\tau)}\bigg[(1+t)^{-\frac{\sigma_1+\sigma+1}{2}}
+(1+t)^{-\frac{\frac{d}{p}-1+\sigma_1}{2}}(1+t)^{-\frac{\frac{d}{p}-1+\sigma_1}{\frac{d}{p}+\sigma_1}\frac{\sigma_1+\sigma+1}{2}}\notag\\
&\quad\quad\quad+(1+t)^{-\frac{\frac{d}{p}-1+\sigma_1}{2}}\big((1+t)^{-\frac{\sigma+\sigma_1+1}{2}}+(1+t)^{-\frac{\frac{d}{p}-1+\sigma_1}{2}}\big)
+(1+t)^{-\frac{\frac{d}{p}-1+\sigma_1}{2}}\bigg],\label{5.20}
\end{align}
which further gives that
\begin{align}
\|\tilde{\mathbf{u}}\|_{\dot{B}_{p,1}^\sigma}^l\lesssim(1+t)^{-\frac{\sigma+\sigma_1+1}{2}},\ \ if\ \ \sigma\leq\frac{d}{p}-2.\label{5.21}
\end{align}
In the case $\sigma+1>\frac{d}{p}-1$, employing $\|\tilde{\mathbf{u}}\|_{\dot{B}_{p,1}^\sigma}^l\lesssim\|\tilde{\mathbf{u}}\|_{\dot{B}_{p,1}^{\frac{d}{p}-2}}^l$ implies that
\begin{align}
\|\tilde{\mathbf{u}}\|_{\dot{B}_{p,1}^\sigma}^l\lesssim\|\tilde{\mathbf{u}}\|_{\dot{B}_{p,1}^{\frac{d}{p}-2}}^l\lesssim(1+t)^{-\frac{\frac{d}{p}-1+\sigma_1}{2}},\ \ if\ \ \frac{d}{p}-2<\sigma\leq\frac{d}{p}.\label{5.22}
\end{align}
In the light of $-\sigma_1<\sigma\leq\frac{d}{p}-1$, we see that
\begin{align}
\|\tilde{P}\|_{\dot{B}_{p,1}^\sigma}^h\lesssim\|\tilde{P}\|_{\dot{B}_{p,1}^{\frac{d}{p}+1}}^h
\lesssim\|\tilde{P}\|_{\dot{B}_{2,1}^{\frac{d}{2}+1}}^h\lesssim(1+t)^{-\frac{\frac{d}{p}-1+\sigma_1}{2}},\label{5.23}
\end{align}
which together with \eqref{5.16} gives
\begin{align}
\|\tilde{P}\|_{\dot{B}_{p,1}^\sigma}&\lesssim\|\tilde{P}\|_{\dot{B}_{p,1}^\sigma}^l+\|\tilde{P}\|_{\dot{B}_{p,1}^\sigma}^h
\lesssim(1+t)^{-\frac{\sigma_1+\sigma}{2}}+(1+t)^{-\frac{\frac{d}{p}-1+\sigma_1}{2}}
\lesssim(1+t)^{-\frac{\sigma_1+\sigma}{2}}.\label{5.24}
\end{align}
When $-\sigma_1<\sigma\leq\frac{d}{p}$, we have
\begin{align}
\|\tilde{\mathbf{u}}\|_{\dot{B}_{p,1}^\sigma}^h\lesssim\|\tilde{\mathbf{u}}\|_{\dot{B}_{p,1}^{\frac{d}{p}+1}}^h
\lesssim\|\tilde{\mathbf{u}}\|_{\dot{B}_{2,1}^{\frac{d}{2}+1}}^h\lesssim(1+t)^{-\frac{\frac{d}{p}-1+\sigma_1}{2}},\label{5.25}
\end{align}
which combines with \eqref{5.21} and \eqref{5.22} gives
\begin{align}
\|\tilde{\mathbf{u}}\|_{\dot{B}_{p,1}^\sigma}&\lesssim\|\tilde{\mathbf{u}}\|_{\dot{B}_{p,1}^\sigma}^l+\|\tilde{\mathbf{u}}\|_{\dot{B}_{p,1}^\sigma}^h
\lesssim(1+t)^{-\frac{\sigma+\sigma_1+1}{2}}+(1+t)^{-\frac{\frac{d}{p}-1+\sigma_1}{2}}\notag\\
&\lesssim(1+t)^{-\frac{\sigma+\sigma_1+1}{2}},\ \ (-\sigma_1<\sigma\leq\frac{d}{p}-2),\label{5.26}
\end{align}
\begin{align}
\|\tilde{\mathbf{u}}\|_{\dot{B}_{p,1}^\sigma}&\lesssim\|\tilde{\mathbf{u}}\|_{\dot{B}_{p,1}^\sigma}^l+\|\tilde{\mathbf{u}}\|_{\dot{B}_{p,1}^\sigma}^h
\lesssim(1+t)^{-\frac{\frac{d}{p}-1+\sigma_1}{2}}+(1+t)^{-\frac{\frac{d}{p}-1+\sigma_1}{2}}\notag\\
&\lesssim(1+t)^{-\frac{\frac{d}{p}-1+\sigma_1}{2}},\ \ (\frac{d}{p}-2<\sigma\leq\frac{d}{p}).\label{5.27}
\end{align}
Hence, thanks to the embedding relation $\dot{B}_{p,1}^0(\mathbb{R}^d)\hookrightarrow L^p(\mathbb{R}^d)$, we deduce that
\begin{align}
\|\Lambda^\sigma\tilde{P}\|_{L^p}\lesssim(1+t)^{-\frac{\sigma_1+\sigma}{2}},\ \ (-\sigma_1<\sigma\leq\frac{d}{p}-1),\label{5.28}\\
\|\Lambda^\sigma\tilde{\mathbf{u}}\|_{L^p}\lesssim(1+t)^{-\frac{\sigma+\sigma_1+1}{2}},\ \ (-\sigma_1<\sigma\leq\frac{d}{p}-2),\label{5.29}\\
\|\Lambda^\sigma\tilde{\mathbf{u}}\|_{L^p}\lesssim(1+t)^{-\frac{\frac{d}{p}-1+\sigma_1}{2}},\ \ (\frac{d}{p}-2<\sigma\leq\frac{d}{p}).\label{5.30}
\end{align}
The proof of Theorem \ref{1 1.2} is completed.
\section{Appendix}
\quad\quad %We will state several important lemmas and propositions on the homogeneous Besov space $\dot{B}^{s}_{p,r}$.
First, let $\mathcal {S}(\mathbb{R}^{d})$ be the Schwartz class of rapidly decreasing function. Given $f\in \mathcal
{S}(\mathbb{R}^{d})$, its Fourier transform $\mathcal {F}f=\widehat{f}$ is defined by $\widehat{f}(\xi)=\int_{\mathbb{R}^{n}}e^{-ix\cdot\xi}f(x)dx$. 
Let $(\chi, \varphi)$ be a couple of smooth functions valued in $[0,1]$ such that $\chi$ is supported in the ball $\{\xi\in\mathbb{R}^{d}:  \ |\xi|\leq\frac{4}{3}\}$, $\varphi$ is supported in the shell $\{\xi\in \mathbb{R}^{d}: \ \frac{3}{4}\leq|\xi|\leq\frac{8}{3}\}$,  $\varphi(\xi):=\chi(\xi/2)-\chi(\xi)$
and
$$\chi(\xi)+\sum_{j\geq0}\varphi(2^{-j}\xi)=1~ \mathrm{for}\ \forall \ \xi \in\mathbb{ R}^{d},
~~~\sum_{j\in \mathbb{Z}}\varphi(2^{-j}\xi)=1 ~\mathrm{for} \ \forall \ \xi \in \mathbb{R}^{d}\setminus\{0\}.$$
For $f\in \mathcal{S}'$, the homogeneous frequency localization operators $\dot{\Delta}_j$ and $\dot{S}_j$ are defined by
\begin{equation*}
	\dot{\Delta}_{j}f\triangleq\varphi(2^{-j}D)f=\mathcal{F}^{-1}(\varphi(2^{-j}\xi)\mathcal{F}f)\quad {\rm and}\quad \dot{S}_{j}f\triangleq\chi(2^{-j}D)f=\mathcal{F}^{-1}(\chi(2^{-j}\xi)\mathcal{F}f).
\end{equation*}
We denote the space $\mathcal{S}'_{h}(\mathbb{R}^d)$ by the dual space $\mathcal{S}'(\mathbb{R}^d)=\{f\in\mathcal{S}(\mathbb{R}^d):\,D^\alpha \hat{f}(0)=0\}$, which can also be identified by the quotient space of $\mathcal{S}'(\mathbb{R}^d)/{\mathbb{P}}$ with the polynomial space ${\mathbb{P}}$. The formal equality $$ f=\sum_{j\in\mathbb{Z}}\dot{\Delta}_jf $$ holds true for $f\in\mathcal{S}'_{h}(\mathbb{R}^d)$ and is called the homogeneous Littlewood-Paley decomposition, and then
\begin{equation}
	\dot{S}_jf=\sum_{q\le j-1}\dot{\Delta}_qf. \nonumber
\end{equation}
One easily verifies that with our choice of $\varphi$,
\begin{equation*}
	\dot{\Delta}_j\dot{\Delta}_qf\equiv0\quad \textrm{if}\quad|j-q|\ge
	2\quad \textrm{and} \quad
	\dot{\Delta}_j(\dot{S}_{q-1}f\dot{\Delta}_q f)\equiv0\quad \hbox{if}
	\quad |j-q|\ge 5.
\end{equation*}

\begin{definition}(Homogeneous Besov space)\label{A.1}
	For $s\in \mathbb{R}$ and $1\le p,r\le \infty$, the homogeneous Besov space $\dot{B}^{s}_{p,r}$ is defined by
	\begin{equation}\label{a.1}
		\dot{B}^s_{p,r}\triangleq\left\{f\in \mathcal{S}_h':||f||_{\dot{B}^s_{p,r}}<+\infty\right\},
	\end{equation}
	where
	\begin{equation}\label{a.2}
		||f||_{\dot{B}^s_{p,r}}\triangleq||2^{js}||\dot{\Delta}_jf||_{L^p}||_{\mathit{l}^r(\mathbb{Z})}.
	\end{equation}
\end{definition}

\begin{definition}(Chemin-Lerner spaces \cite{Chemin})\label{A.2}
	Let $T>0$, $s\in\mathbb{R}$, $1<r,p,q\le\infty$. The space $\widetilde{L}^q_{T}(\dot{B}^s_{p,r})$ is defined by
	\begin{equation}\label{a.3}
		\widetilde{L}^q_{T}(\dot{B}^s_{p,r})\triangleq\left\{f\in L^q(0,T;\mathcal{S}'_h):||f||_{\widetilde{L}^q_{T}(\dot{B}^s_{p,r})}<+\infty\right\},
	\end{equation}
	where
	\begin{equation}\label{a.4}
		||f||_{\widetilde{L}^q_{T}(\dot{B}^s_{p,r})}\triangleq||2^{js}||\dot{\Delta}_jf||_{L^q(0,T;L^p)}||_{\mathit{l}^r(\mathbb{Z})}.
	\end{equation}
\end{definition}
\begin{remark}\label{A.3}
	It holds that
	\begin{equation*}
		||f||_{\widetilde{L}^q_{T}(\dot{B}^s_{p,r})}\le||f||_{L^q_{T}(\dot{B}^s_{p,r})}\quad {\rm if}\quad r\ge q;\quad||f||_{\widetilde{L}^q_{T}(\dot{B}^s_{p,r})}\ge||f||_{L^q_{T}(\dot{B}^s_{p,r})}\quad {\rm if}\quad r\le q.
	\end{equation*}
\end{remark}
Restricting the above norms \eqref{a.2}, \eqref{a.4} to the low or high frequencies parts of distributions will be crucial in our approach. For example, let us fix some integer $j_0$ and set
\begin{equation*}
	||f||^l_{\dot{B}^{s}_{p,1}}\triangleq\sum_{j\le j_0}2^{js}||\dot{\Delta}_jf||_{L^p},\quad ||f||^h_{\dot{B}^{s}_{p,1}}\triangleq\sum_{j\ge j_0-1}2^{js}||\dot{\Delta}_jf||_{L^p};
\end{equation*}
\begin{equation*}
    ||f||^l_{\widetilde{L}^\infty_{T}(\dot{B}^s_{p,1})}\triangleq\sum_{j\le j_0}2^{js}||\dot{\Delta}_jf||_{L^\infty_T(L^p)},\ \ ||f||^h_{\widetilde{L}^\infty_{T}(\dot{B}^s_{p,1})}\triangleq\sum_{j\ge j_0-1}2^{js}||\dot{\Delta}_jf||_{L^\infty_T(L^p)}.
\end{equation*}

\begin{lemma}(Bernstein inequalities)\label{A.4}
Let $\mathscr{B}$ be a ball and $\mathscr{C}$ be a ring of $\mathbb{R}^d$.
For $\lambda>0$, integer $k\ge0$, $1\le p\le q\le \infty$ and a smooth homogeneous function $\sigma$ in $\mathbb{R}^d\backslash\{0\}$ of degree $m$, then
	\begin{equation*}
		||\nabla^kf||_{L^q}\le C^{k+1}\lambda^{k+d(\frac{1}{p}-\frac{1}{q})}||f||_{L^p},\quad {\rm whenever\ supp}\widehat{f}\subset\lambda\mathscr{B},
	\end{equation*}
	\begin{equation*}
		C^{-k-1}\lambda^k||f||_{L^q}\le||\nabla^kf||_{L^p}\le C^{k+1}\lambda^k||f||_{L^p},\quad {\rm whenever\ supp}\widehat{f}\subset\lambda\mathscr{C},
	\end{equation*}
	\begin{equation*}
		||\sigma(\nabla)f||_{L^q}\le C_{\sigma,m}\lambda^{m+d(\frac{1}{p}-\frac{1}{q})}||f||_{L^p},\quad {\rm whenever\ supp}\widehat{f}\subset\lambda\mathscr{C}.
	\end{equation*}
\end{lemma}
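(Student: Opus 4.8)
The plan is to deduce all three estimates from a single device: Young's convolution inequality applied to suitably rescaled kernels. Concretely, whenever $\widehat f$ is supported in a dilate $\lambda\Omega$ of a fixed compact set $\Omega$ and $m(\nabla)$ denotes the Fourier multiplier of symbol $m$, I would fix $\psi\in\mathcal S(\mathbb R^d)$ with $\widehat\psi\equiv 1$ on a neighbourhood of $\Omega$, so that $\widehat f=\widehat\psi(\cdot/\lambda)\widehat f$ and hence $m(\nabla)f=K_\lambda*f$ with $K_\lambda=\mathcal F^{-1}\big(m(\xi)\widehat\psi(\xi/\lambda)\big)$. Young's inequality then gives $\|m(\nabla)f\|_{L^q}\le\|K_\lambda\|_{L^r}\|f\|_{L^p}$ with $1+\tfrac1q=\tfrac1r+\tfrac1p$, and the stated power of $\lambda$ comes out of the scaling identity $\|g(\lambda\cdot)\|_{L^r}=\lambda^{-d/r}\|g\|_{L^r}$ together with $1-\tfrac1r=\tfrac1p-\tfrac1q$.

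\textbf{Steps.} First, for the ball estimate I would take $\Omega=\mathscr B$ and $m(\xi)=(i\xi)^{\otimes k}$, so $\nabla^k f=\lambda^{d+k}(\nabla^k\psi)(\lambda\cdot)*f$ and the above yields $\|\nabla^k f\|_{L^q}\le\lambda^{k+d(\frac1p-\frac1q)}\|\nabla^k\psi\|_{L^r}\|f\|_{L^p}$; the bound $\|\nabla^k\psi\|_{L^r}\le C^{k+1}$ for the fixed Schwartz function $\psi$ finishes it. Next, the upper part of the ring estimate is the same computation with $\Omega=\mathscr C$ and $q=p$; for the lower part I would use that $\lambda\mathscr C$ stays away from the origin, fixing a cutoff $\widetilde\psi\in\mathcal S$ whose Fourier transform vanishes near $0$ and equals $1$ on $\mathscr C$, and inverting $\nabla^k$ via the symbol $g(\xi)=|\xi|^{-2k}\,\overline{(i\xi)^{\otimes k}}\,\widehat{\widetilde\psi}(\xi)\in C_c^\infty(\mathbb R^d\setminus\{0\})$, which satisfies $g(\xi)\cdot(i\xi)^{\otimes k}=\widehat{\widetilde\psi}(\xi)$. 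This rewrites $f$ as $\lambda^{d-k}$ times a fixed Schwartz kernel (rescaled by $\lambda$) convolved against the components of $\nabla^k f$, so Young gives $\lambda^k\|f\|_{L^p}\le C^{k+1}\|\nabla^k f\|_{L^p}$; combining with the ball estimate covers the mixed $L^q$–$L^p$ version. Finally, for the multiplier estimate I would localize $\sigma$: choose $\theta\in C_c^\infty(\mathbb R^d\setminus\{0\})$ with $\theta\equiv 1$ near $\mathscr C$, so that $\sigma(\xi)\theta(\xi/\lambda)=\lambda^m(\sigma\theta)(\xi/\lambda)$ with $\sigma\theta\in C_c^\infty$ and therefore $h:=\mathcal F^{-1}(\sigma\theta)\in\mathcal S$; then $\sigma(\nabla)f=\lambda^{m+d}h(\lambda\cdot)*f$ and Young gives $\|\sigma(\nabla)f\|_{L^q}\le C_{\sigma,m}\,\lambda^{m+d(\frac1p-\frac1q)}\|f\|_{L^p}$.

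\textbf{Main obstacle.} The hard point is the third inequality: $\sigma$ is homogeneous of degree $m$ but only smooth on $\mathbb R^d\setminus\{0\}$, so $\mathcal F^{-1}\sigma$ is not an $L^r$ (not even a locally integrable) convolution kernel and Young cannot be applied to $\sigma$ directly. The remedy — and the step that really uses $\operatorname{supp}\widehat f\subset\lambda\mathscr C$ (a ring staying away from the origin) rather than just $\lambda\mathscr B$, as does the lower ring bound through the factor $|\xi|^{-2k}$ — is the annular localization $\sigma\mapsto\sigma\theta$: it leaves $\sigma(\nabla)f$ unchanged because $\theta(\cdot/\lambda)\equiv1$ on $\lambda\mathscr C\supset\operatorname{supp}\widehat f$, yet restores a compactly supported (hence Schwartz-kernel) symbol with the exact homogeneity scaling $\lambda^m$. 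Everything else — in particular checking that the constants grow only geometrically in $k$ in the first two parts — amounts to routine estimates on fixed Schwartz functions.
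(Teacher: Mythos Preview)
The paper does not supply a proof of this lemma at all: it is stated in the Appendix as a classical fact (the standard reference being Bahouri--Chemin--Danchin, \emph{Fourier Analysis and Nonlinear PDEs}, Lemma~2.1), so there is no ``paper's own proof'' to compare against. Your argument is precisely the textbook one found there: reproduce $f$ as $\psi_\lambda*f$ via a frequency cut-off, push derivatives or the multiplier onto the Schwartz kernel, and read off the $\lambda$-power from Young's inequality together with $1-\tfrac1r=\tfrac1p-\tfrac1q$; the lower ring bound is obtained by inverting $\nabla^k$ through the partition $\sum_{|\alpha|=k}\tfrac{k!}{\alpha!}(i\xi)^\alpha(-i\xi)^\alpha=|\xi|^{2k}$, which is exactly what your tensor notation $g(\xi)=|\xi|^{-2k}\overline{(i\xi)^{\otimes k}}\widehat{\widetilde\psi}(\xi)$ encodes. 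Your identification of the only genuinely delicate point---that the homogeneous symbol $\sigma$ must be localised away from the origin before one can invoke Young, and that this is why the third estimate requires annular rather than ball support---is correct and well explained.

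One small cosmetic remark: in the second displayed inequality the paper writes $\|f\|_{L^q}$ on the left, which (if taken at face value rather than as a misprint for $L^p$) you recover by combining your $L^p$ lower bound with the first inequality applied at order $k=0$; your parenthetical ``combining with the ball estimate covers the mixed $L^q$--$L^p$ version'' handles this. Otherwise the proposal is complete and matches the standard argument the paper is implicitly citing.
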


\begin{proposition}\cite{Bahouri}(Embedding for Besov space on $\mathbb{R}^d$)\label{A.5}
	\begin{itemize}
		\item For any $p\in[1,\infty]$, we have the continuous embedding $\dot{B}^{0}_{p,1}\hookrightarrow L^p\hookrightarrow\dot{B}^{0}_{p,\infty}$.
		\item If $s\in\mathbb{R}$, $1\le p_1\le p_2\le \infty$, and $1\le r_1\le r_2\le \infty$ then  $\dot{B}^{s}_{p_1,r_1}\hookrightarrow\dot{B}^{s-d(\frac{1}{p_1}-\frac{1}{p_2})}_{p_2,r_2}$.
		\item The space $\dot{B}^{\frac{d}{p}}_{p,1}$ is continuously embedded in the set of bounded continuous function (going to zero at infinity if, additionally, $p<\infty$).
	\end{itemize}
\end{proposition}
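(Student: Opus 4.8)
As all three inclusions are classical facts of Littlewood--Paley theory, the plan is to prove them directly from Definition \ref{A.1} and the Bernstein inequalities of Lemma \ref{A.4}, treating each item in turn. For the first item, the inclusion $\dot{B}^0_{p,1}\hookrightarrow L^p$ follows by summing the identity $f=\sum_{j\in\mathbb{Z}}\dot{\Delta}_j f$ (valid in $\mathcal{S}'_h(\mathbb{R}^d)$) and applying the triangle inequality in $L^p$: $\|f\|_{L^p}\le\sum_{j}\|\dot{\Delta}_j f\|_{L^p}=\|f\|_{\dot{B}^0_{p,1}}$, the series converging in $L^p$ precisely because the right-hand side is finite. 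The reverse inclusion $L^p\hookrightarrow\dot{B}^0_{p,\infty}$ is immediate from the fact that $\dot{\Delta}_j=\varphi(2^{-j}D)$ acts as convolution against a kernel whose $L^1$ norm equals $\|\mathcal{F}^{-1}\varphi\|_{L^1}$, independent of $j$, so that Young's inequality gives $\sup_{j}\|\dot{\Delta}_j f\|_{L^p}\lesssim\|f\|_{L^p}$.

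For the second item I would fix $j\in\mathbb{Z}$ and use that $\dot{\Delta}_j f$ has Fourier support in the annulus $\{|\xi|\sim 2^j\}$, so the first Bernstein inequality yields $\|\dot{\Delta}_j f\|_{L^{p_2}}\lesssim 2^{jd(\frac{1}{p_1}-\frac{1}{p_2})}\|\dot{\Delta}_j f\|_{L^{p_1}}$ whenever $p_1\le p_2$. Multiplying through by $2^{j(s-d(\frac{1}{p_1}-\frac{1}{p_2}))}$ gives $2^{j(s-d(\frac{1}{p_1}-\frac{1}{p_2}))}\|\dot{\Delta}_j f\|_{L^{p_2}}\lesssim 2^{js}\|\dot{\Delta}_j f\|_{L^{p_1}}$; taking the $\ell^{r_2}(\mathbb{Z})$-norm on the left, the $\ell^{r_1}(\mathbb{Z})$-norm on the right, and invoking the elementary inclusion $\ell^{r_1}\hookrightarrow\ell^{r_2}$ valid for $r_1\le r_2$, one obtains $\|f\|_{\dot{B}^{s-d(\frac{1}{p_1}-\frac{1}{p_2})}_{p_2,r_2}}\lesssim\|f\|_{\dot{B}^s_{p_1,r_1}}$.

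The third item follows as a corollary: the second item with $(p_1,p_2,r_1,r_2)=(p,\infty,1,1)$ gives $\dot{B}^{d/p}_{p,1}\hookrightarrow\dot{B}^0_{\infty,1}$, and the first item with $p=\infty$ gives $\dot{B}^0_{\infty,1}\hookrightarrow L^\infty$; in particular $\sum_{j}\|\dot{\Delta}_j f\|_{L^\infty}\lesssim\|f\|_{\dot{B}^{d/p}_{p,1}}<\infty$. Since each $\dot{\Delta}_j f$ is band-limited, hence smooth and bounded, the series $\sum_{j}\dot{\Delta}_j f$ converges uniformly, so its sum $f$ is a bounded continuous function; when $p<\infty$ each $\dot{\Delta}_j f$ additionally lies in $L^p$ with $p<\infty$ and, being band-limited, tends to zero at infinity, and uniform convergence transfers this property to $f$. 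The one genuinely delicate point — and the step I expect to require the most care — is the realization issue: working in $\mathcal{S}'_h(\mathbb{R}^d)$ rather than with distributions modulo polynomials, and checking that the low-frequency block $\sum_{j\le 0}\dot{\Delta}_j f$ converges (here in $L^\infty$) under the stated hypotheses so that $f$ is a bona fide tempered distribution; this bookkeeping is carried out exactly as in \cite{Bahouri}.
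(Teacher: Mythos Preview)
Your argument is correct and is exactly the standard Littlewood--Paley proof of these embeddings. Note, however, that the paper does not supply its own proof of this proposition: it is stated in the Appendix with a citation to \cite{Bahouri} and is quoted as a known result, so there is no in-paper proof to compare against. Your write-up simply reproduces the classical derivation from that reference (Bernstein plus $\ell^{r_1}\hookrightarrow\ell^{r_2}$ for the second item, and the chain $\dot{B}^{d/p}_{p,1}\hookrightarrow\dot{B}^0_{\infty,1}\hookrightarrow L^\infty$ together with uniform convergence of the Littlewood--Paley series for the third), including the correct remark that the only subtle point is the realization of $f$ in $\mathcal{S}'_h$ via convergence of the low-frequency tail.
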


\begin{lemma}\label{2 2.7}\cite{Bahouri}
Let $s>0$, $1\leq p,r\leq\infty$, then we have
\begin{align}
\|ab\|_{\dot{B}^s_{p,r}}\lesssim \|a\|_\infty\|b\|_{\dot{B}_{p_r}^s}+\|b\|_{L^\infty}\|a\|_{\dot{B}_{p,r}^s}.
\end{align}
For $d\geq 1$ and $-\min\{\frac{d}{p},\frac{d}{p'}\}<s\leq\frac{d}{p}$ for $\frac{1}{p}+\frac{1}{p'}=1$, the following inequality holds:
\begin{align}
\|ab\|_{\dot{B}^s_{p,1}}\lesssim \|a\|_{\dot{B}_{p_1}^{\frac{d}{p}}}\|b\|_{\dot{B}_{p,1}^s}.
\end{align}
Finally, if $d\geq 1$ and $-\min\{\frac{d}{p},\frac{d}{p'}\}\leq s<\frac{d}{p}$ for $\frac{1}{p}+\frac{1}{p'}=1$, then we have
\begin{align}
\|ab\|_{\dot{B}^s_{p,\infty}}\lesssim \|a\|_{\dot{B}_{p_1}^{\frac{d}{p}}}\|b\|_{\dot{B}_{p,\infty}^s}.
\end{align}
\end{lemma}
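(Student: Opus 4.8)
All three inequalities rest on Bony's homogeneous paraproduct decomposition
\[
ab=\dot T_ab+\dot T_ba+\dot R(a,b),\qquad
\dot T_ab=\sum_{j}\dot S_{j-1}a\,\dot\Delta_jb,\qquad
\dot R(a,b)=\sum_{j}\sum_{|j-j'|\le1}\dot\Delta_ja\,\dot\Delta_{j'}b .
\]
The plan is to estimate the three pieces separately in the target norm, using only Bernstein's inequality (Lemma \ref{A.4}), Hölder's inequality, the quasi-orthogonality of the blocks $\dot\Delta_j$, and — for the second and third estimates — the embedding $\dot B^{d/p}_{p,1}\hookrightarrow L^\infty$ of Proposition \ref{A.5}. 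The two relevant spectral facts are that $\dot S_{j-1}a\,\dot\Delta_jb$ is spectrally supported in an annulus of size $\sim2^{j}$, whereas each summand of $\dot R$ is supported in a \emph{ball} of radius $\sim2^{j}$, so that $\dot\Delta_q\dot R(a,b)$ picks up only the indices $j\gtrsim q$.

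First I would dispatch the two paraproducts. From $\|\dot S_{j-1}a\|_{L^\infty}\le\|a\|_{L^\infty}$ and the annular localization one gets $2^{js}\|\dot\Delta_j(\dot T_ab)\|_{L^p}\lesssim\|a\|_{L^\infty}\sum_{|j-k|\le4}2^{(j-k)s}\,2^{ks}\|\dot\Delta_kb\|_{L^p}$, and taking the $\ell^{r}$ norm yields $\|\dot T_ab\|_{\dot B^s_{p,r}}\lesssim\|a\|_{L^\infty}\|b\|_{\dot B^s_{p,r}}$ for every $s\in\mathbb R$; this covers the $a$-in-$L^\infty$ (resp.\ $a$-in-$\dot B^{d/p}_{p,1}$) part of all three inequalities. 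For the transposed paraproduct $\dot T_ba$, in the first estimate ($s>0$) I would argue symmetrically to obtain $\|b\|_{L^\infty}\|a\|_{\dot B^s_{p,r}}$. In the second and third estimates the ``slowly varying'' factor is $b$, and I would bound $\|\dot S_{j-1}b\|_{L^\infty}\lesssim\sum_{k\le j-2}2^{kd/p}\|\dot\Delta_kb\|_{L^p}$ by Bernstein; when $s<d/p$ the right-hand side is $\lesssim2^{j(d/p-s)}\|b\|_{\dot B^s_{p,\infty}}$ (the geometric series is governed by its top term), and when $s=d/p$ it is just $\lesssim\|b\|_{\dot B^{d/p}_{p,1}}$. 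After multiplying by $2^{js}$ and summing in $j$, the localized factor $a$ is swallowed by $\|a\|_{\dot B^{d/p}_{p,1}}$, giving $\|\dot T_ba\|_{\dot B^s_{p,r}}\lesssim\|a\|_{\dot B^{d/p}_{p,1}}\|b\|_{\dot B^s_{p,r}}$ (when $r=1$ one first bounds $\|b\|_{\dot B^s_{p,\infty}}\le\|b\|_{\dot B^s_{p,1}}$). This is precisely where the hypothesis $s\le d/p$ enters.

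The delicate piece is the remainder $\dot R(a,b)$. Applying Hölder in the physical variable (splitting the two factors into $L^{m}$ with $\tfrac1m=\tfrac1{p_1}+\tfrac1{p_2}$), then Bernstein from $L^{m}$ up to $L^p$, together with the ball-localization $j\ge q-N_0$, reduces matters to controlling a convolution-type series $\sum_{j\ge q-N_0}2^{(q-j)s}c_j$ in which $(c_j)$ is built from $\|a\|_{\dot B^{d/p}_{p,1}}$ and $\|b\|_{\dot B^s_{p,r}}$ (in the first estimate one instead keeps one factor in $L^\infty$, and the resulting bound is $\|b\|_{L^\infty}\|a\|_{\dot B^s_{p,r}}+\|a\|_{L^\infty}\|b\|_{\dot B^s_{p,r}}$). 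Convergence of this series, and of the subsequent $\ell^{r}$ summation in $q$, is exactly what forces the sign/size restriction on $s$: $s>0$ for the first estimate, and $s>-\min\{d/p,d/p'\}$ for the second and third, the sharp lower bound being produced by choosing $p_1,p_2$ in terms of $s$ so as to redistribute integrability optimally between the two factors. Summing the three contributions then gives the three stated inequalities. I expect this last step — obtaining the \emph{optimal} range of $s$ in the remainder estimate rather than merely $s>0$ — to be the main technical obstacle; everything else is bookkeeping with Bernstein's inequality and quasi-orthogonality. This is the content of the product and paraproduct theorems of \cite{Bahouri}, to which the statement is attributed.
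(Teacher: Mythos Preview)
The paper does not prove this lemma at all: it is stated in the Appendix with a citation to \cite{Bahouri} and no argument, so there is nothing to compare against. Your sketch via Bony's paraproduct decomposition, with the paraproducts handled by quasi-orthogonality and Bernstein, and the remainder controlled by H\"older plus Bernstein with the integrability split chosen to produce the threshold $s>-\min\{d/p,d/p'\}$, is exactly the standard route taken in \cite{Bahouri} (see Theorems~2.47, 2.52 and Corollary~2.54 there), and it is correct.
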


In the low frequency, we shall use the classical estimates for the following two linear equations. One is the Cauchy problem of the parabolic equations:
\begin{equation}\label{2.1}
\left\{\begin{array}{ll}
u_t-\frac{\kappa_2^2}{\alpha}\Delta u=F,\\
u(0,x)=u_0(x),
\end{array}\right.
\end{equation}
where the unknown is $u=u(x,t)\in\mathbb{R}^n$ with $ t\geq0,\ x\in\mathbb{R}^d.$
\begin{lemma}\label{2 2.1}\cite{Crin-Barat6}
Let $\kappa_2>0,\ \alpha>0,\ d,\ n\geq 1,\ s_1\in\mathbb{R},\ 1\leq\rho_1,p_1\leq\infty$ and $T>0$ be given time. Assume $u_0^l\in\dot{B}_{p_1,1}^{s_1}$ and $F^l\in\tilde{L}_T^{\rho_1}(\dot{B}_{p_1,1}^{s_1-2+\frac{2}{\rho_1}})$. If $u$ is a solution to the Cauchy problem (\ref{2.1}), then for all $\tilde{\rho}_1\in[\rho_1,\infty]$, $u$ satisfies
$$
\|u\|^l_{\tilde{L}_T^{\tilde{\rho}_1}(\dot{B}_{p_1,1}^{s_1+\frac{2}{\tilde{\rho}_1}})}\lesssim \|u_0\|^l_{\dot{B}_{p_1,1}^{s_1}}+\|F\|^l_{\tilde{L}_T^{\rho_1}(\dot{B}_{p_1,1}^{s_1-2+\frac{2}{\rho_1}})}.
$$
\end{lemma}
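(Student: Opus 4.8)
The plan is to reduce the estimate to a frequency-localized bound for the heat semigroup and then resum. First I would apply $\dot\Delta_j$ to $(\ref{2.1})$ and use Duhamel's formula, writing
\[
\dot\Delta_j u(t)=e^{\frac{\kappa_2^2}{\alpha}t\Delta}\dot\Delta_j u_0+\int_0^t e^{\frac{\kappa_2^2}{\alpha}(t-\tau)\Delta}\dot\Delta_j F(\tau)\,d\tau .
\]
The key input is the spectral smoothing property of the heat flow: there exists $c>0$ such that for every $j\in\mathbb{Z}$ and $t\ge0$,
\[
\|e^{\frac{\kappa_2^2}{\alpha}t\Delta}\dot\Delta_j f\|_{L^{p_1}}\le C\,e^{-c2^{2j}t}\,\|\dot\Delta_j f\|_{L^{p_1}} ,
\]
which follows, as in \cite{Bahouri}, from the fact that the Fourier multiplier $e^{-\frac{\kappa_2^2}{\alpha}t|\xi|^2}$ restricted to the annulus $\{|\xi|\sim2^j\}$ satisfies Mikhlin-type bounds uniformly in the product $t\,2^{2j}$, together with the $L^1$ control of the rescaled heat kernel.

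Setting $a_j(t):=\|\dot\Delta_j u(t)\|_{L^{p_1}}$, taking the $L^{p_1}$ norm of the Duhamel identity and applying the smoothing estimate gives
\[
a_j(t)\lesssim e^{-c2^{2j}t}\|\dot\Delta_j u_0\|_{L^{p_1}}+\int_0^t e^{-c2^{2j}(t-\tau)}\|\dot\Delta_j F(\tau)\|_{L^{p_1}}\,d\tau .
\]
I would then take the $L^{\tilde\rho_1}(0,T)$ norm in time. For the first term, $\|e^{-c2^{2j}\cdot}\|_{L^{\tilde\rho_1}(0,T)}\lesssim 2^{-2j/\tilde\rho_1}$ (and the norm is bounded when $\tilde\rho_1=\infty$). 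For the second term, which is a convolution in time, Young's inequality with $\tfrac1m=1+\tfrac1{\tilde\rho_1}-\tfrac1{\rho_1}$ (admissible since $\tilde\rho_1\ge\rho_1$) together with $\|e^{-c2^{2j}\cdot}\|_{L^m(\mathbb{R}^+)}\lesssim 2^{-2j(1+\frac1{\tilde\rho_1}-\frac1{\rho_1})}$ yields the bound $2^{-2j(1+\frac1{\tilde\rho_1}-\frac1{\rho_1})}\,\|\dot\Delta_j F\|_{L^{\rho_1}_T(L^{p_1})}$.

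Finally I would multiply through by $2^{j(s_1+2/\tilde\rho_1)}$ and sum over the low frequencies $j\le j_0$. The data contribution collapses to $\sum_{j\le j_0}2^{js_1}\|\dot\Delta_j u_0\|_{L^{p_1}}=\|u_0\|^l_{\dot B^{s_1}_{p_1,1}}$, while in the source contribution the power of $2^j$ simplifies to $s_1+\tfrac2{\tilde\rho_1}-2\big(1+\tfrac1{\tilde\rho_1}-\tfrac1{\rho_1}\big)=s_1-2+\tfrac2{\rho_1}$, giving exactly $\|F\|^l_{\tilde L^{\rho_1}_T(\dot B^{s_1-2+2/\rho_1}_{p_1,1})}$, which is the claimed inequality. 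The resummation is legitimate because the Chemin–Lerner norm keeps the $\ell^1$ in $j$ outside the time norm, and since $\tilde\rho_1\ge\rho_1$ Remark \ref{A.3} and Minkowski's inequality allow interchanging $\ell^1_j$ with $L^{\tilde\rho_1}_t$.

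There is no genuine obstacle here — the argument is the classical parabolic smoothing estimate in Besov spaces — so the only care needed is bookkeeping: handling the endpoint cases $\rho_1=\infty$ and $\tilde\rho_1=\infty$ directly (where the $L^m$-in-time norms degenerate), and checking that the three indices $s_1,\rho_1,\tilde\rho_1$ stay consistent throughout. The restriction to $j\le j_0$ plays no role in the proof itself (the same estimate holds for all $j\in\mathbb{Z}$); it is retained only because in the application the gain from $-\Delta$ is exploited solely on the low-frequency block, where $\mathbf Z$ and $\tilde P$ behave like solutions of a genuinely parabolic equation.
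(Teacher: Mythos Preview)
Your argument is correct and is exactly the classical route: localize, apply Duhamel, use the annulus-supported heat kernel bound $\|e^{\nu t\Delta}\dot\Delta_j f\|_{L^{p_1}}\lesssim e^{-c2^{2j}t}\|\dot\Delta_j f\|_{L^{p_1}}$, and close with Young's inequality in time followed by an $\ell^1_j$ resummation. Note, however, that the paper does not actually supply a proof of this lemma; it is stated in the Appendix with a citation to \cite{Crin-Barat6} (and the underlying estimate is already in \cite{Bahouri}), so there is nothing to compare against beyond confirming that your proof is the standard one.
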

Another is on the Cauchy problem of the damped equation
\begin{equation}\label{2.2}
\left\{\begin{array}{ll}
\frac{1}{\alpha}u_t+u=F,\\
u(0,x)=u_0(x),
\end{array}\right.
\end{equation}
where the unknown is $u=u(x,t)\in\mathbb{R}^n,\ t>0,\ x\in\mathbb{R}^d.$
\begin{lemma}\label{2 2.2}\cite{Crin-Barat6}
Let $\alpha>0,\ d,n\geq 1,\ s_1\in\mathbb{R},\ 1\leq\rho_1,p_1\leq\infty$ and $T>0$ be given time. Assume $u_0^l\in\dot{B}_{p_1,1}^{s_1}$ and $F^l\in\tilde{L}_T^{\rho_1}(\dot{B}_{p_1,1}^{s_1})$. If $u$ is a solution to the Cauchy problem (\ref{2.2}), then for all $\tilde{\rho}_1\in[\rho_1,\infty]$, $u$ satisfies
$$\|u\|^l_{\tilde{L}_T^{\tilde{\rho}_1}(\dot{B}_{p_1,1}^{s_1})}\lesssim \|u_0\|^l_{\dot{B}_{p_1,1}^{s_1}}+\|F\|^l_{\tilde{L}_T^{\rho_1}(\dot{B}_{p_1,1}^{s_1})}.$$
\end{lemma}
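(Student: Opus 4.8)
The plan is to reduce the Cauchy problem \eqref{2.2} to a frequency-localized scalar ODE, integrate it explicitly by Duhamel's formula, and then carry out the time-integrability bookkeeping with Young's convolution inequality in the $t$ variable. First I would apply the Littlewood--Paley block $\dot\Delta_j$ to \eqref{2.2}; being a Fourier multiplier it commutes with $\partial_t$ and with the identity, so $u_j:=\dot\Delta_j u$ solves $\frac1\alpha\partial_t u_j+u_j=\dot\Delta_j F$ with initial datum $\dot\Delta_j u_0$, and Duhamel gives
\[
u_j(t)=e^{-\alpha t}\dot\Delta_j u_0+\alpha\int_0^t e^{-\alpha(t-\tau)}\dot\Delta_j F(\tau)\,d\tau .
\]
Taking the $L^{p_1}$ norm in $x$ and using Minkowski's inequality to bring it inside the time integral yields the pointwise-in-$t$ bound $\|u_j(t)\|_{L^{p_1}}\le e^{-\alpha t}\|\dot\Delta_j u_0\|_{L^{p_1}}+\alpha\int_0^t e^{-\alpha(t-\tau)}\|\dot\Delta_j F(\tau)\|_{L^{p_1}}\,d\tau$.

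Next I would take the $L^{\tilde\rho_1}(0,T)$ norm in time. The first term contributes at most $\|e^{-\alpha\cdot}\|_{L^{\tilde\rho_1}(0,\infty)}\|\dot\Delta_j u_0\|_{L^{p_1}}$, and the second is the convolution on $(0,T)$ of $t\mapsto e^{-\alpha t}$ with $\tau\mapsto\|\dot\Delta_j F(\tau)\|_{L^{p_1}}$ (extended by zero), so Young's inequality bounds it by $\alpha\|e^{-\alpha\cdot}\|_{L^{m}(0,\infty)}\|\dot\Delta_j F\|_{L^{\rho_1}_T(L^{p_1})}$ with $\frac1m=1+\frac1{\tilde\rho_1}-\frac1{\rho_1}$. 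Since $\tilde\rho_1\ge\rho_1$ the exponent $m$ lies in $[1,\infty]$, the choice is admissible, and both exponential norms are finite with bounds depending only on $\alpha$; hence $\|u_j\|_{L^{\tilde\rho_1}_T(L^{p_1})}\lesssim\|\dot\Delta_j u_0\|_{L^{p_1}}+\|\dot\Delta_j F\|_{L^{\rho_1}_T(L^{p_1})}$ with a constant independent of $j$ and $T$. Multiplying through by $2^{js_1}$, restricting the summation to the low-frequency range $j\le j_0$ and summing in $\ell^1$ then gives precisely $\|u\|^l_{\tilde L_T^{\tilde\rho_1}(\dot B_{p_1,1}^{s_1})}\lesssim\|u_0\|^l_{\dot B_{p_1,1}^{s_1}}+\|F\|^l_{\tilde L_T^{\rho_1}(\dot B_{p_1,1}^{s_1})}$.

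There is no genuine obstacle in this argument; the only point worth checking is that the constants produced by the time estimates are uniform in the frequency index $j$, which is automatic since the decay rate $\alpha$ of the damped semigroup is frequency-independent. I would also stress that, unlike the parabolic estimate of Lemma \ref{2 2.1}, the damping here is of order zero, so no spatial smoothing is gained and the source $F$ is measured at the same regularity $s_1$ on the right-hand side; this is what makes the frequency-truncation trick (the factors $2^{2j}$ in \eqref{3.4}) necessary when combining this bound with the parabolic one in the low-frequency analysis.
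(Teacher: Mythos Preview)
Your argument is correct and is precisely the standard route for such order-zero damped equations: localize in frequency, write the explicit Duhamel formula with the semigroup $e^{-\alpha t}$, and use Young's convolution inequality in time (with exponent $m$ determined by $\frac{1}{m}=1+\frac{1}{\tilde\rho_1}-\frac{1}{\rho_1}\in[0,1]$) before summing over $j\le j_0$. The paper itself gives no proof of this lemma---it is quoted from \cite{Crin-Barat6}---so there is nothing to compare against beyond observing that your proof is exactly the expected one; your closing remark about the lack of spatial smoothing and the role of the $2^{2j}$ factors in \eqref{3.4} is also accurate.
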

%The proof of Lemma \ref{2 2.1} and Lemma \ref{2 2.2} are similar to Lemma 6.8 and Lemma 6.9 in \cite{Crin-Barat6}, we omit here.

We also use the commutators estimates in low frequency.
\begin{lemma}\label{2 2.6}\cite{Bahouri}
Let $\sigma\in\mathbb{R}$, $1\leq r\leq\infty$, and $1\leq p\leq p_1\leq\infty$. Let $v$ be a vector field over $\mathbb{R}^d$. Assume that
\begin{align}
\sigma>-d\min\{\frac{1}{p_1},\frac{1}{p'}\}\ \ or\ \ \sigma>-1-d\min\{\frac{1}{p_1},\frac{1}{p_1}\}\ \ if\ \ {\rm div} v=0.
\end{align}
Define $R_j\triangleq [v\cdot\nabla,\dot\Delta_j]f$ (or $R_j\triangleq {\rm div}([v,\dot\Delta_j]f)$, if ${\rm div}v=0$). There exists a constant $C$, depending continuously on $p,\ p_1,\ \sigma$ and $d$, such that
\begin{align}
\|(2^{j\sigma}\|R_j\|_{L^p})_j\|_{l^r}\leq C\|\nabla v\|_{\dot{B}_{p_1,\infty}^{\frac{d}{p_1}}\cap L^\infty}\|f\|_{\dot{B}_{p,r}^\sigma},\ if\ \sigma<1+\frac{d}{p_1}.
\end{align}
Further, if $\sigma>0$ (or $\sigma>-1$, if ${\rm div}v=0$) and $\frac{1}{p_2}=\frac{1}{p}-\frac{1}{p_1}$, then
\begin{align}
\|(2^{j\sigma}\|R_j\|_{L^p})_j\|_{l^r}\leq C\|\nabla v\|_{L^\infty}\|f\|_{\dot{B}_{p,r}^\sigma}+\|\nabla f\|_{L^{p_2}}\|\nabla v\|_{\dot{B}_{p_1,r}^{\sigma-1}}.
\end{align}
In the limit case $\sigma=-\min(\frac{d}{p_1},\frac{d}{p'})$ [$or\ \sigma=-1-\min(\frac{d}{p_1},\frac{d}{p'}), if\ {\rm div}v=0$], we have
\begin{align}
\sup\limits_{j\geq -1}2^{j\sigma}\|R_j\|_{L^p}\leq C\|\nabla v\|_{\dot{B}_{p_1,1}^{\frac{d}{p_1}}}\|f\|_{\dot{B}_{p,\infty}^\sigma}.
\end{align}
\end{lemma}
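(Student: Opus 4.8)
The proof rests on reducing the bilinear commutator $R_j=[v\cdot\nabla,\dot\Delta_j]f=v\cdot\nabla\dot\Delta_jf-\dot\Delta_j(v\cdot\nabla f)$ to dyadic blocks via Bony's paraproduct decomposition, and then estimating block by block. Writing $v\cdot\nabla f=\sum_{k=1}^d v^k\partial_k f$ and applying $v^k\partial_k f=T_{v^k}\partial_k f+T_{\partial_k f}v^k+R(v^k,\partial_k f)$ both inside $\dot\Delta_j(v\cdot\nabla f)$ and in $v\cdot\nabla\dot\Delta_jf$, the plan is to split $R_j=\sum_k(R_j^{1,k}+R_j^{2,k}+R_j^{3,k})$ where, after harmless re-indexing of the frequency blocks,
\[
R_j^{1,k}=\sum_{|j'-j|\le N_0}[\dot S_{j'-1}v^k,\dot\Delta_j]\dot\Delta_{j'}\partial_k f,\quad
R_j^{2,k}=T_{\partial_k\dot\Delta_jf}v^k-\dot\Delta_j T_{\partial_k f}v^k,\quad
R_j^{3,k}=R(v^k,\partial_k\dot\Delta_jf)-\dot\Delta_j R(v^k,\partial_k f).
\]
In the divergence-free case I would first rewrite $v\cdot\nabla f=\mathrm{div}(fv)$, so that the object to estimate becomes $\mathrm{div}([v,\dot\Delta_j]f)$; carrying one derivative on the outer divergence (an integration by parts inside the kernel representation) is exactly what lowers the admissible threshold on $\sigma$ by one in every hypothesis.

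For $R_j^{1,k}$, only $O(1)$ values of $j'$ contribute, so I would use the kernel representation $\dot\Delta_j g=2^{jd}h(2^j\cdot)\star g$ with $\widehat h=\varphi$, the commutator-kernel identity, and the first-order Taylor formula on $\dot S_{j'-1}v^k$, together with the bound $\int|y|\,|2^{jd}h(2^jy)|\,dy\lesssim 2^{-j}$, to get $\|R_j^{1,k}\|_{L^p}\lesssim\|\nabla v\|_{L^\infty}\sum_{|j'-j|\le N_0}2^{j'-j}\|\dot\Delta_{j'}f\|_{L^p}$; multiplying by $2^{j\sigma}$ and taking the $\ell^r$-norm, the sum over $j'$ is a finite-width convolution, so this term is controlled by $\|\nabla v\|_{L^\infty}\|f\|_{\dot B^{\sigma}_{p,r}}$ with no restriction on $\sigma$. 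The terms $R_j^{2,k}$ and $R_j^{3,k}$ I would treat by plain dyadic summation and Bernstein's inequalities: spectral-support considerations show that only blocks of $v$ of frequency $\sim 2^j$ enter $R_j^{3,k}$, while $R_j^{2,k}$ forces one to sum the low-frequency blocks of $\partial_k f$ against $\dot\Delta_j v^k$. The convergence of these geometric sums is precisely what produces the constraints $\sigma>-d\min\{\frac1{p_1},\frac1{p'}\}$ (respectively $\sigma>-1-d\min\{\frac1{p_1},\frac1{p'}\}$ when $\mathrm{div}\,v=0$) at the lower end and $\sigma<1+\frac d{p_1}$ at the upper end, and it is here that the Besov norm $\|\nabla v\|_{\dot B^{d/p_1}_{p_1,\infty}}$ appears (alongside the $L^\infty$ norm coming from $R_j^{1,k}$), yielding the stated $\|\nabla v\|_{\dot B^{d/p_1}_{p_1,\infty}\cap L^\infty}$.

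For the refined inequality under $\sigma>0$ (or $\sigma>-1$ if $\mathrm{div}\,v=0$), the plan is to repeat the decomposition but, rather than using Bernstein to move a derivative off $f$, keep $\nabla f$ intact and pair with Hölder using $\frac1p=\frac1{p_1}+\frac1{p_2}$: the mean-value trick in $R_j^{1,k}$ already produces a factor $\nabla v$, and pairing $\|\dot\Delta_{j'}\nabla f\|_{L^{p_2}}$ against $\|\dot\Delta_{j'-1}\nabla v\|_{L^{p_1}}$ gives the term $\|\nabla f\|_{L^{p_2}}\|\nabla v\|_{\dot B^{\sigma-1}_{p_1,r}}$, the rest being absorbed into $\|\nabla v\|_{L^\infty}\|f\|_{\dot B^\sigma_{p,r}}$. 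The endpoint $\sigma=-\min\{\frac d{p_1},\frac d{p'}\}$ (or its divergence-free shift) is the borderline of these sums, where the geometric series over low frequencies of $v$ degenerates into a bare sum; I would then run the identical decomposition with $\sup_{j\ge-1}$ and $\ell^1$ replacing $\ell^r$, which is exactly why the stronger norm $\|\nabla v\|_{\dot B^{d/p_1}_{p_1,1}}$ is needed there.

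I expect the main obstacle to be the bookkeeping in $R_j^{2,k}$ and $R_j^{3,k}$: each must be split into the high–high and low–high interaction regimes, the exact powers of $2$ tracked so the resulting geometric series converge under precisely the stated hypotheses on $\sigma$, and in the divergence-free case one must carefully exploit the outer divergence to recover the lost derivative. By contrast, the commutator piece $R_j^{1,k}$, the Hölder refinement, and the $\ell^1$ endpoint reformulation are routine once the kernel estimate and the mapping properties of Bony's decomposition are in hand.
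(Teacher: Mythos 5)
The paper does not prove this lemma at all: it is quoted verbatim (up to a typo in the divergence-free threshold) from \cite{Bahouri}, i.e.\ the commutator estimate of Bahouri--Chemin--Danchin, so there is no in-paper argument to compare with. Your sketch follows exactly the standard proof in that reference -- Bony decomposition of $[v\cdot\nabla,\dot\Delta_j]f$ into the paraproduct commutator (treated by the kernel/first-order Taylor estimate), the paraproduct-difference and remainder pieces (whose dyadic summation produces the stated constraints on $\sigma$ and the norm $\|\nabla v\|_{\dot B^{d/p_1}_{p_1,\infty}\cap L^\infty}$), the rewriting $v\cdot\nabla f=\mathrm{div}(fv)$ in the divergence-free case, the H\"older pairing for the refined bound, and the $\ell^1$-in-$j$ variant at the endpoint -- and, while the block-by-block bookkeeping for $R^{2,k}_j$ and $R^{3,k}_j$ is only outlined, the approach and the key estimates are correct and essentially identical to the cited source.
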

In the high frequency, the next lemma pertains to commutators estimates.
\begin{lemma}\label{2 2.3}\cite{Crin-Barat1,Crin-Barat2,XuJ2}
Let $p\in [2,4]$ and $s>0$. Define $p^*\triangleq 2p/(p-2)$. For $j\in\mathbb{Z}$, denote $R_j\triangleq \dot{S}_{j-1}w\dot\Delta_jz-\dot\Delta_j(wz)$.
There exists a constant $C$ depending only on the threshold number $J_1$ between low and high frequencies and on $s,\ p,\ d$, such that
\begin{equation}\label{2.3}
\begin{array}{ll}
\sum\limits_{j\geq J_1}(2^{js}\|R_j\|_{L^2})\leq C(\|\nabla w\|_{\dot{B}_{p,1}^{\frac{d}{p}}}\|z\|^h_{\dot{B}_{2,1}^{s-1}}
+\|z\|^l_{\dot{B}_{p,1}^{\frac{d}{p}-\frac{d}{p^*}}}\|w\|^l_{\dot{B}_{p,1}^{\sigma_1}}\\[3mm]
+\|z\|_{\dot{B}_{p,1}^{\frac{d}{p}-k}}\|w\|^h_{\dot{B}_{2,1}^{s+k}}
+\|z\|^l_{\dot{B}_{p,1}^{\sigma_2}}\|\nabla w\|^l_{\dot{B}_{p,1}^{\frac{d}{p}-\frac{d}{p^*}}}),
\end{array}
\end{equation}
for any $k\geq 0$, $\sigma\geq s$ and $\sigma_1,\sigma_2\in\mathbb{R}$.
\end{lemma}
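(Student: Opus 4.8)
The plan is to follow the paraproduct analysis of \cite{Crin-Barat1,Crin-Barat2,XuJ2}. The first step is to make the frequency interactions in $R_j$ explicit: adding and subtracting $\dot\Delta_j(\dot{S}_{j-1}w\,z)$ yields
\[
R_j=\bigl[\dot{S}_{j-1}w\,,\,\dot\Delta_j\bigr]z-\dot\Delta_j\bigl((\mathrm{Id}-\dot{S}_{j-1})w\cdot z\bigr),
\]
where $[\dot{S}_{j-1}w\,,\,\dot\Delta_j]z\triangleq\dot{S}_{j-1}w\,\dot\Delta_j z-\dot\Delta_j(\dot{S}_{j-1}w\,z)$ is a genuine commutator and $(\mathrm{Id}-\dot{S}_{j-1})w=\sum_{j'\ge j-1}\dot\Delta_{j'}w$. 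Using the spectral localisation of $\dot\Delta_j$, the last term reduces, up to fixed frequency shifts, to a paraproduct-type sum $\sum_{j'\simeq j}\dot\Delta_j(\dot\Delta_{j'}w\,\dot{S}_{j'}z)$ and a remainder-type sum $\sum_{j'\gtrsim j}\dot\Delta_j(\dot\Delta_{j'}w\,\dot\Delta_{j'}z)$ (the two high indices being comparable), each of which I would then split according to whether the relevant Littlewood--Paley blocks of $w$ and $z$ lie below or above the threshold $J_1$.

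For the commutator term I would use the kernel representation $\dot\Delta_j f=2^{jd}h(2^j\,\cdot\,)\star f$ with $h=\mathcal{F}^{-1}\varphi$ and the first-order Taylor expansion $\dot{S}_{j-1}w(x)-\dot{S}_{j-1}w(x-y)=\int_0^1 y\cdot\nabla\dot{S}_{j-1}w(x-\tau y)\,d\tau$; this produces the gain $2^{-j}$ and a factor $\nabla w$, from which the contributions involving $\|\nabla w\|_{\dot{B}_{p,1}^{\frac{d}{p}}}$ arise once $2^{-j}$ is absorbed into the weight $2^{js}$. The decisive algebraic fact, available precisely because $p\in[2,4]$, is the H\"older splitting $\frac{1}{p}+\frac{1}{p^*}=\frac{1}{2}$ with $p^*=2p/(p-2)$: in each bilinear piece one factor is put in $L^{p^*}$ and the other in $L^p$ (or $L^2$) so that the product lands in $L^2$ as the left-hand side requires; when the $L^{p^*}$ factor sits at low frequency, Bernstein's inequality converts its $L^{p^*}$ norm into $\|\cdot\|^l_{\dot{B}_{p,1}^{\frac{d}{p}-\frac{d}{p^*}}}$, which is exactly the norm that appears in \eqref{2.3}. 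The high-frequency blocks of $w$ are instead paired with $L^2$, producing the factors $\|w\|^h_{\dot{B}_{2,1}^{s-1}}$ and $\|w\|^h_{\dot{B}_{2,1}^{s+k}}$, while the embedding $\dot{B}_{2,1}^{\frac{d}{2}}\hookrightarrow\dot{B}_{p,1}^{\frac{d}{p}}$ together with Bernstein at frequencies $\gtrsim 2^{J_1}$ guarantees that the $\ell^1$-sum over $j\ge J_1$ converges; the hypothesis $s>0$ is needed exactly to sum the diagonal low-frequency part of the remainder term.

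The extra parameters in the statement (arbitrary $k\ge0$ and $\sigma_1,\sigma_2\in\mathbb{R}$, and the harmless $\sigma\ge s$) only record how many derivatives one chooses to transfer from a high-frequency factor to a low-frequency one in each term; I would prove the estimate for one convenient derivative allocation and then recover the stated form from the elementary low/high-frequency Bernstein bounds $\|u\|^l_{\dot{B}_{p,1}^{a}}\lesssim 2^{J_1(a-b)}\|u\|^l_{\dot{B}_{p,1}^{b}}$ for $a\ge b$ and $\|u\|^h_{\dot{B}_{2,1}^{a}}\lesssim 2^{J_1(a-b)}\|u\|^h_{\dot{B}_{2,1}^{b}}$ for $a\le b$. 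I expect the main difficulty to be bookkeeping rather than anything conceptual: in each of the finitely many Bony pieces one must check that the power of $2^j$ coming out of Bernstein is strictly negative when $j\ge J_1$ (so the series converges) and that the factor placed in $L^{p^*}$ is always one that may legitimately be taken at low frequency. Since the lemma is quoted from \cite{Crin-Barat1,Crin-Barat2,XuJ2}, in the paper I would reproduce only the decomposition above and the H\"older/Bernstein mechanism, referring to those works for the complete computation.
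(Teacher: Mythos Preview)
The paper does not prove this lemma at all: it is stated in the Appendix with the citation \cite{Crin-Barat1,Crin-Barat2,XuJ2} and no argument is given. Your sketch---the decomposition $R_j=[\dot{S}_{j-1}w,\dot\Delta_j]z-\dot\Delta_j((\mathrm{Id}-\dot{S}_{j-1})w\,z)$, the kernel/Taylor treatment of the commutator to extract $2^{-j}\nabla w$, the Bony splitting of the second piece into paraproduct and remainder terms with a subsequent low/high separation, and the H\"older pairing $\frac{1}{p}+\frac{1}{p^*}=\frac{1}{2}$ that converts a low-frequency $L^{p^*}$ factor into $\|\cdot\|^l_{\dot{B}_{p,1}^{d/p-d/p^*}}$ via Bernstein---is precisely the argument carried out in those references, so your proposal is correct and in fact supplies more detail than the paper itself. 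Your closing remark that in the paper one would only outline the mechanism and cite the references is exactly what the authors did (indeed, they omit even the outline).
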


%The following lemma states interpolation inequalities restricted in high and low frequencies.
%\begin{lemma}\label{2 2.5}\cite{XuJ2}
%Let $s_1\leq s_2,\ q,\ r\in[1,+\infty],\ \theta\in(0,1)$ and $1\leq\alpha_1\leq\alpha_2\leq\infty$ satisfying $\frac{1}{\alpha}=\frac{\theta}{\alpha_1}+\frac{1-\theta}{\alpha_2}$, then we have
%\begin{align}
%\|f\|^{l,J_\epsilon}_{\tilde{L}_t^\alpha(\dot{B}_{q,r}^{\theta s_1+(1-\theta )s_2})}\leq(\|f\|^{l,J_\epsilon}_{\tilde{L}_t^{\alpha_1}(\dot{B}_{q,r}^{s_1})})^\theta(\|f\|^{l,J_\epsilon}_{\tilde{L}_t^{\alpha_2}(\dot{B}_{q,r}^{s_2})})^{1-\theta},\label{2.5}\\[3mm]
%\|f\|^{h,J_\epsilon}_{\tilde{L}_t^\alpha(\dot{B}_{q,r}^{\theta s_1+(1-\theta )s_2})}\leq(\|f\|^{h,J_\epsilon}_{\tilde{L}_t^{\alpha_1}(\dot{B}_{q,r}^{s_1})})^\theta(\|f\|^{h,J_\epsilon}_{\tilde{L}_t^{\alpha_2}(\dot{B}_{q,r}^{s_2})})^{1-\theta},\label{2.6}
%\end{align}
%where $J_{\epsilon}$ is the threshold between low and high frequencies.
%\end{lemma}

The next two lemmas are nonlinear composition estimates.
\begin{lemma}\label{2 2.4}\cite{XuJ2}
Let $f(n)$ be a smooth function such that $f(0)=0$. If $n^{h,J_\epsilon}\in\dot{B}_{2,1}^s(\mathbb{R}^d)$ and $n^{l,J_\epsilon}\in\dot{B}_{p,1}^{\frac{d}{p}}(\mathbb{R}^d)$ for $s>1$ and $s\geq\frac{d}{2}$, $2\leq p\leq 4$, if $d=1$, and $2\leq p\leq min\{4,\frac{2d}{d-2}\}$ if $d\geq 2$, then we have $f(n)^{h,J_\epsilon}\in\dot{B}_{2,1}^s(\mathbb{R}^d)$ and there is a positive constant $C$ independent of $\epsilon$ such that
\begin{equation}\label{2.4}
\begin{array}{ll}
\|f(n)\|_{\dot{B}_{2,1}^s}^{h,J_\epsilon}\leq C(1+\|n\|^{l,J_\epsilon}_{\dot{B}_{p,1}^{\frac{d}{p}}}+2^{(\frac{d}{2}-s)J_\epsilon}\|n\|^{h,J_\epsilon}_{\dot{B}_{2,1}^s})(\|n\|^{l,J_\epsilon}_{\dot{B}_{p,1}^{s+\frac{d}{p}-\frac{d}{2}}}
+\|n\|^{h,J_\epsilon}_{\dot{B}_{2,1}^s}),
\end{array}
\end{equation}
where $J_{\epsilon}$ is the threshold between low and high frequencies.
\end{lemma}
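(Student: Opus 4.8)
The statement is the hybrid $L^p$--$L^2$ counterpart of the classical composition estimate in Besov spaces (see \cite{Bahouri}), so the plan is to run the classical telescoping argument while splitting every dyadic block at the threshold $J_\epsilon$, as in \cite{XuJ2}. First I would reduce to the case $f'(0)=0$: setting $\tilde f(x):=f(x)-f'(0)x$ one has $\tilde f(0)=\tilde f'(0)=0$ and $\|f'(0)n\|^{h,J_\epsilon}_{\dot{B}_{2,1}^s}=|f'(0)|\,\|n\|^{h,J_\epsilon}_{\dot{B}_{2,1}^s}$, so it is enough to bound $\|\tilde f(n)\|^{h,J_\epsilon}_{\dot{B}_{2,1}^s}$. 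The hypotheses make $\|n\|_{L^\infty}$ controlled and small: $\dot{B}_{p,1}^{d/p}\hookrightarrow L^\infty$ (Proposition \ref{A.5}) handles $n^{l,J_\epsilon}$, while Bernstein's inequality (Lemma \ref{A.4}) together with $s\ge d/2$ gives
\begin{equation*}
\|n^{h,J_\epsilon}\|_{L^\infty}\lesssim\sum_{j\ge J_\epsilon}2^{jd/2}\|\dot\Delta_j n\|_{L^2}\lesssim 2^{(d/2-s)J_\epsilon}\|n\|^{h,J_\epsilon}_{\dot{B}_{2,1}^s},
\end{equation*}
which is exactly the origin of the prefactor $1+\|n\|^{l,J_\epsilon}_{\dot{B}_{p,1}^{d/p}}+2^{(d/2-s)J_\epsilon}\|n\|^{h,J_\epsilon}_{\dot{B}_{2,1}^s}$; all the derivative bounds $\sup_{|y|\le\|n\|_{L^\infty}}|f^{(k)}(y)|$ are then finite, and they carry a power of $\|n\|_{L^\infty}$ for $k\ge1$ since $\tilde f'(0)=0$.

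The core step is the telescoping identity $\tilde f(n)=\sum_{q\in\mathbb{Z}}\mu_q\,\dot\Delta_q n$ with $\mu_q:=\int_0^1\tilde f'(\dot S_q n+\tau\dot\Delta_q n)\,d\tau$; one has $\|\mu_q\|_{L^\infty}\lesssim\|n\|_{L^\infty}$, the part of $\mu_q$ that is linear in $n$ is spectrally supported in $\{|\xi|\lesssim 2^q\}$, and the remainder $\mu_q^{\rm nl}$ vanishes quadratically in $n$ at the origin with $\nabla^k\mu_q^{\rm nl}$ controlled by $2^{kq}$ times norms built from $\dot S_q n+\tau\dot\Delta_q n$ (Fa\`a di Bruno plus Bernstein). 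Applying $\dot\Delta_j$ for $j\ge J_\epsilon$ and splitting the $q$-sum into the near-diagonal range $q\ge j-N_0$ and the off-diagonal range $q<j-N_0$, and each of those into $q\ge J_\epsilon$ versus $q<J_\epsilon$, I would estimate the blocks with $q\ge J_\epsilon$ using $\|\mu_q\|_{L^\infty}$ for the near-diagonal ones and the rapidly decaying high-frequency tail $\|(\mathrm{Id}-\dot S_{j-N_0})\mu_q\|_{L^\infty}\lesssim 2^{-k(j-q)}\|n\|_{L^\infty}$ (with $k>s$ fixed) for the off-diagonal ones, obtaining in both cases a contribution $\lesssim\|n\|_{L^\infty}\,\|n\|^{h,J_\epsilon}_{\dot{B}_{2,1}^s}$ after multiplication by $2^{js}$ and summation. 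For the blocks with $q<J_\epsilon$ (automatically off-diagonal up to finitely many boundary blocks near $j\sim J_\epsilon$), only $\mu_q^{\rm nl}$ contributes, and writing $\dot\Delta_j(\mu_q^{\rm nl}\dot\Delta_q n)=\dot\Delta_j\big((\mathrm{Id}-\dot S_{j-N_0})\mu_q^{\rm nl}\cdot\dot\Delta_q n^{l,J_\epsilon}\big)$ one bounds the output by a H\"older estimate that keeps $\dot\Delta_q n^{l,J_\epsilon}$ in $L^p$, converted only to $L^\infty$ via $\|\dot\Delta_q n^{l,J_\epsilon}\|_{L^\infty}\lesssim 2^{(d/p)q}\|\dot\Delta_q n^{l,J_\epsilon}\|_{L^p}$, and exploits the quadratic vanishing of $\mu_q^{\rm nl}$; multiplying by $2^{js}$ and summing the resulting geometric series in $j-q$ (convergent since $k>s$) collapses this to $\lesssim\sum_{q<J_\epsilon}2^{q(s+d/p-d/2)}\|\dot\Delta_q n\|_{L^p}\lesssim\|n\|^{l,J_\epsilon}_{\dot{B}_{p,1}^{s+d/p-d/2}}$, the leftover geometric factor being absorbed into the $2^{(d/2-s)J_\epsilon}$ weight carried by the other pieces. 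Since $\mu_q$, hence $\tilde f(n)$, reappears on the right through the norms of $\dot S_q n+\tau\dot\Delta_q n$, the inequality is closed by absorption, which is legitimate because the self-referential coefficient is $\|n\|_{L^\infty}\lesssim\|n\|^{l,J_\epsilon}_{\dot{B}_{p,1}^{d/p}}+2^{(d/2-s)J_\epsilon}\|n\|^{h,J_\epsilon}_{\dot{B}_{2,1}^s}$ from the first step.

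The step I expect to be the main obstacle is exactly this hybrid splitting: Bernstein's inequality does not allow one to pass from $L^p$ ($p>2$) to $L^2$ on a single dyadic block, so the whole argument must be organized so that every $L^2$-norm is taken either of a high-frequency output block $\dot\Delta_j(\cdot)$ with $j\ge J_\epsilon$ or of a high-frequency block of $n$ itself, while every low-frequency input $\dot\Delta_q n$ with $q<J_\epsilon$ stays in $L^p$ and is converted only to $L^\infty$. This constraint, together with the requirement that the multiplier tail $(\mathrm{Id}-\dot S_{j-N_0})\mu_q^{\rm nl}$ be summed with enough decay, is what forces the regularity shift $d/p-d/2$ in the term $\|n\|^{l,J_\epsilon}_{\dot{B}_{p,1}^{s+d/p-d/2}}$ and the weight $2^{(d/2-s)J_\epsilon}$, and it is also the place where one must be most careful to keep every constant independent of $\epsilon$.
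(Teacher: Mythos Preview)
The paper does not prove this lemma: it is stated in the Appendix with the citation \cite{XuJ2} and no argument is given, so there is no ``paper's own proof'' to compare against. Your sketch is in fact the standard route used in \cite{XuJ2} (and ultimately in \cite{Bahouri}): Meyer's first-linearization/telescoping identity, reduction to $\tilde f'(0)=0$, and a near-/off-diagonal splitting of the dyadic sum, with the new twist being the simultaneous high/low splitting at $J_\epsilon$ so that low-frequency inputs $\dot\Delta_q n$ with $q<J_\epsilon$ are kept in $L^p$ and only converted to $L^\infty$ via Bernstein, while all $L^2$ norms are taken on high-frequency outputs or high-frequency pieces of $n$. Your identification of the obstruction (no Bernstein gain $L^p\to L^2$ for $p>2$) and of the resulting index shift $s+\tfrac{d}{p}-\tfrac{d}{2}$ and weight $2^{(\frac{d}{2}-s)J_\epsilon}$ is exactly the mechanism in \cite{XuJ2}.

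One point to tighten: the way you phrase the ``linear part'' of $\mu_q$ is a bit loose. The function $\mu_q=\int_0^1\tilde f'(\dot S_q n+\tau\dot\Delta_q n)\,d\tau$ is genuinely nonlinear in $n$ and has no spectral localization; what matters is rather the Bernstein-type bound $\|\nabla^k\mu_q\|_{L^\infty}\lesssim C(\|n\|_{L^\infty})\,2^{kq}$ (obtained from Fa\`a di Bruno and the spectral support of $\dot S_q n+\tau\dot\Delta_q n$), which is what actually gives the decay $\|(\mathrm{Id}-\dot S_{j-N_0})\mu_q\|_{L^\infty}\lesssim 2^{-k(j-q)}$ in the off-diagonal regime. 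Also, there is no absorption/self-referential step needed at the end: the telescoping decomposition already expresses $\tilde f(n)$ purely in terms of $n$, so the right-hand side involves only norms of $n$, not of $\tilde f(n)$; the prefactor $1+\|n\|^{l,J_\epsilon}_{\dot B_{p,1}^{d/p}}+2^{(d/2-s)J_\epsilon}\|n\|^{h,J_\epsilon}_{\dot B_{2,1}^s}$ simply records the dependence of the constants $C(\|n\|_{L^\infty})$ on $\|n\|_{L^\infty}$. With those two clarifications your outline matches the argument in the cited reference.
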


\begin{lemma}\cite{XuJ1}\label{A.13}
Let $m\in\mathbb{N}$ and $s>0$. Let $G$ be a function in $\mathcal{C}^{\infty}(\mathbb{R}^m\times\mathbb{R}^d)$ such that $G(0,\cdots,0)=0$. Then for every real valued functions $f_1,\cdots,f_m\in\dot{B}_{p,r}^s\cap L^\infty$, the function $G(f_1,\cdots,f_m)$ belongs to $\dot{B}_{p,r}^s\cap L^\infty$ and we have
  \begin{align}
		\|G(f_1,\cdots,f_m)\|_{\dot{B}_{p,r}^s}\leq C\|(f_1,\cdots,f_m)\|_{\dot{B}_{p,r}^s}
	\end{align}
with $C$ depending only on $\|f_i\|_{L^\infty}(i=1,\cdots,m)$, $G'_{f_i}$ (and high derivatives), $s,\ p$ and $d$.

In that case $s>-min(\frac{d}{p},\frac{d}{p^\ast})$, then $f_1,\cdots,f_m\in\dot{B}_{p,r}^s\cap\dot{B}_{p,1}^{\frac{d}{p}}$ implies that $G(f_1,\cdots,f_m)\in\dot{B}_{p,r}^s\cap\dot{B}_{p,1}^{\frac{d}{p}}$ and we have
    \begin{align}
		\|G(f_1,\cdots,f_m)\|_{\dot{B}_{p,r}^s}\leq C(1+\|f_1\|_{\dot{B}_{p,1}^{\frac{d}{p}}}+\cdots+\|f_m\|_{\dot{B}_{p,1}^{\frac{d}{p}}})\|(f_1,\cdots,f_m)\|_{\dot{B}_{p,r}^s}.
	\end{align}
\end{lemma}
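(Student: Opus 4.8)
\section*{Proof proposal for Lemma \ref{A.13}}

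The plan is to prove the two assertions by the classical telescoping-plus-paraproduct scheme for Nemytskii (composition) operators on homogeneous Besov spaces (cf.\ \cite{Bahouri}); the multivariate character of $G$ enters only through a vector-valued mean value theorem and causes no new difficulty. Throughout write $f=(f_1,\dots,f_m)$, $\dot S_q f=(\dot S_q f_1,\dots,\dot S_q f_m)$, etc.

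First I would treat the case $s>0$. Decompose $G(f)=\sum_{q\in\mathbb Z}\big(G(\dot S_{q+1}f)-G(\dot S_q f)\big)$, which converges in $\mathcal S_h'$ since $G(0,\dots,0)=0$, $f\in L^\infty$, and $\dot S_q f\to 0$ (resp.\ $\to f$) in $\mathcal S'$ as $q\to-\infty$ (resp.\ $q\to+\infty$). As $\dot S_{q+1}f-\dot S_q f=\dot\Delta_q f$, the mean value theorem gives $G(\dot S_{q+1}f)-G(\dot S_q f)=\sum_{k=1}^m m_q^k\,\dot\Delta_q f_k$ with $m_q^k:=\int_0^1\partial_{y_k}G(\dot S_q f+\tau\dot\Delta_q f)\,d\tau$. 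Since $\|\dot S_q f\|_{L^\infty},\|\dot\Delta_q f\|_{L^\infty}\lesssim\|f\|_{L^\infty}$, one has $\|m_q^k\|_{L^\infty}\le\sup_{|y|\le C\|f\|_{L^\infty}}|\nabla G(y)|$, and Bernstein's inequality (Lemma \ref{A.4}) applied to $\dot S_q f$ and $\dot\Delta_q f$ yields $\|\partial^\alpha m_q^k\|_{L^\infty}\lesssim 2^{q|\alpha|}$ for every multi-index $\alpha$, with constants controlled by finitely many $\sup_{|y|\le C\|f\|_{L^\infty}}|\partial^\beta G(y)|$. Now fix $j$ and a threshold $N_1$ and split $\dot\Delta_j G(f)=\sum_k\big(\sum_{q\ge j-N_1}+\sum_{q<j-N_1}\big)\dot\Delta_j(m_q^k\dot\Delta_q f_k)$. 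For $q\ge j-N_1$ use the crude bound $\|\dot\Delta_j(m_q^k\dot\Delta_q f_k)\|_{L^p}\lesssim\|\dot\Delta_q f_k\|_{L^p}$; for $q<j-N_1$ exploit the cancellation of $\dot\Delta_j$ by writing, with $2L$ an even integer, $\dot\Delta_j g=2^{-2jL}\,\omega(2^{-j}D)\,\Delta^L g$ where $\omega(\eta)=(-1)^L|\eta|^{-2L}\varphi(\eta)$ is smooth and supported in an annulus, so $\omega(2^{-j}D)$ has an $L^1$-kernel of norm independent of $j$; then Leibniz plus the Bernstein bounds on $m_q^k$ and on $\dot\Delta_q f_k$ give $\|\Delta^L(m_q^k\dot\Delta_q f_k)\|_{L^p}\lesssim 2^{2qL}\|\dot\Delta_q f_k\|_{L^p}$, hence $\|\dot\Delta_j(m_q^k\dot\Delta_q f_k)\|_{L^p}\lesssim 2^{-2L(j-q)}\|\dot\Delta_q f_k\|_{L^p}$. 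Choosing $2L>s$, in both regimes the $q$-sum, after multiplication by $2^{js}$, is the convolution of $\big(2^{qs}\|\dot\Delta_q f_k\|_{L^p}\big)_{q}$ with a summable sequence; taking the $\ell^r(\mathbb Z)$-norm and applying Young's inequality for series gives $\|G(f)\|_{\dot B^s_{p,r}}\lesssim\sum_k\|f_k\|_{\dot B^s_{p,r}}$. Finally $|G(f)|\le\sup_{|y|\le C\|f\|_{L^\infty}}|G(y)|<\infty$, so $G(f)\in L^\infty$, and the constant depends only on $s,p,d$ and on $\sup_{|y|\le C\|f\|_{L^\infty}}|\partial^\beta G(y)|$ for $|\beta|\le 2L$.

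For the second assertion I would reduce to the first by para-linearization. Write $G(f)=\sum_{k=1}^m g_k(f)\,f_k$ with $g_k(f):=\int_0^1\partial_{y_k}G(\tau f)\,d\tau=\partial_{y_k}G(0)+H_k(f)$, where $H_k\in\mathcal C^\infty(\mathbb R^m)$ satisfies $H_k(0,\dots,0)=0$. Since $f_k\in\dot B^{d/p}_{p,1}\hookrightarrow L^\infty$ (Proposition \ref{A.5}), the first part applied with the exponent $d/p>0$ shows $H_k(f)\in\dot B^{d/p}_{p,1}$ with $\|H_k(f)\|_{\dot B^{d/p}_{p,1}}\le C(\|f\|_{L^\infty})\,\|(f_1,\dots,f_m)\|_{\dot B^{d/p}_{p,1}}$. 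The constant term contributes $\sum_k\partial_{y_k}G(0)\,f_k\in\dot B^s_{p,r}$; for the remaining terms the product law $\dot B^{d/p}_{p,1}\cdot\dot B^s_{p,r}\hookrightarrow\dot B^s_{p,r}$ — valid exactly on the range of $s$ in the statement, as in Lemma \ref{2 2.7} (extended to general $r$) — gives $\|H_k(f)f_k\|_{\dot B^s_{p,r}}\lesssim\|H_k(f)\|_{\dot B^{d/p}_{p,1}}\|f_k\|_{\dot B^s_{p,r}}$. Summing over $k$ yields $\|G(f)\|_{\dot B^s_{p,r}}\le C(\|f\|_{L^\infty},G)\big(1+\sum_k\|f_k\|_{\dot B^{d/p}_{p,1}}\big)\|(f_1,\dots,f_m)\|_{\dot B^s_{p,r}}$, and $G(f)\in\dot B^{d/p}_{p,1}$ follows once more from the first part with exponent $d/p$.

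I expect the main obstacle to be the off-diagonal estimate in the first part — bounding $\dot\Delta_j(m_q^k\dot\Delta_q f_k)$ for $q\ll j$ — because $m_q^k$ is only "frequency-localized" in the soft sense that its derivatives grow like $2^{q|\alpha|}$; this is what forces the $L^p$ Leibniz bookkeeping and the choice $2L>s$. A secondary delicate point is, in the second part, tracking the constants so that the $\dot B^{d/p}_{p,1}$-dependence appears as the explicit linear factor $\big(1+\sum_k\|f_k\|_{\dot B^{d/p}_{p,1}}\big)$ while the remaining constant depends only on $\|f_i\|_{L^\infty}$ and $G$, together with checking that the product law indeed holds on the whole stated range $s>-\min(d/p,d/p^\ast)$.
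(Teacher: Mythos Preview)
The paper does not supply its own proof of this lemma: it is quoted from \cite{XuJ1} as a known composition estimate and stated in the Appendix without argument, so there is nothing in the paper to compare your proposal against. Your scheme is the standard Meyer first-linearization argument (as in \cite{Bahouri}, Theorem~2.61) adapted to the vector-valued setting, and both parts are correctly outlined; the telescoping decomposition with the $2L>s$ off-diagonal gain for the case $s>0$, followed by the para-linearization $G(f)=\sum_k(\partial_{y_k}G(0)+H_k(f))f_k$ combined with the product law of Lemma~\ref{2 2.7} for the full range, is exactly how this result is obtained in the references.
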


We also show a product law to handle some nonlinear terms in the proof of the uniqueness.
\begin{lemma}\cite{Crin-Barat6}\label{A h}
	Let $s_1>0$, $2\le p\le 4$. Then, it holds that
	\begin{align}
		\|ab\|^h_{\dot{B}^{s_1}_{2,1}}\lesssim\|a\|_{\dot{B}^{\frac{d}{p}}_{p,1}}\|b\|^h_{\dot{B}^{s_1}_{2,1}}+\|b\|_{\dot{B}^{\frac{d}{p}}_{p,1}}\|a\|^h_{\dot{B}^{s_1}_{2,1}}+\|b\|^l_{\dot{B}^{\frac{d}{p}}_{p,1}}\|a\|^l_{\dot{B}^{\frac{d}{p}}_{p,1}}+\|b\|_{\dot{B}^{\frac{d}{p}}_{p,1}}\|a\|^l_{\dot{B}^{s_1-\frac{d}{2}+\frac{d}{p}}_{p,1}}.
	\end{align}
\end{lemma}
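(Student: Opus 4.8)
The plan is to run Bony's homogeneous paraproduct decomposition $ab=T_ab+T_ba+R(a,b)$, where $T_ab=\sum_q\dot S_{q-1}a\,\dot\Delta_q b$ and $R(a,b)=\sum_q\dot\Delta_q a\,\widetilde{\dot\Delta}_q b$ with $\widetilde{\dot\Delta}_q=\dot\Delta_{q-1}+\dot\Delta_q+\dot\Delta_{q+1}$, together with the splitting $a=a^\ell+a^h$, $b=b^\ell+b^h$ into low and high frequencies at the threshold $j_0$. The target is $\|ab\|^h_{\dot B^{s_1}_{2,1}}=\sum_{j\ge j_0-1}2^{js_1}\|\dot\Delta_j(ab)\|_{L^2}$, and I would organise the estimate around the four products $a^hb^h$, $a^hb^\ell$, $a^\ell b^h$, $a^\ell b^\ell$, the guiding principle being that a high-frequency output of $ab$ must involve a factor already at high frequency, except in the pure low-low interaction, which can only reach the finitely many dyadic shells sitting immediately above $j_0$.

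For $a^hb^h$ I would use the Besov product law of Lemma \ref{2 2.7} (valid for $s_1>0$) to bound $\|a^hb^h\|_{\dot B^{s_1}_{2,1}}\lesssim\|a^h\|_{L^\infty}\|b^h\|_{\dot B^{s_1}_{2,1}}+\|b^h\|_{L^\infty}\|a^h\|_{\dot B^{s_1}_{2,1}}$, and control $\|a^h\|_{L^\infty}\lesssim\sum_{q\ge j_0}2^{q\frac dp}\|\dot\Delta_q a\|_{L^p}\lesssim\|a\|_{\dot B^{d/p}_{p,1}}$ by Bernstein and $\dot B^{d/p}_{p,1}\hookrightarrow L^\infty$ (Proposition \ref{A.5}); since $\|a^h\|_{\dot B^{s_1}_{2,1}}=\|a\|^h_{\dot B^{s_1}_{2,1}}$, this produces the first two terms on the right. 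The mixed products $a^hb^\ell$ and $a^\ell b^h$ I would expand once more by Bony: the sub-paraproduct and the remainder in which the high-frequency factor occupies the slot that lands in $\dot B^{s_1}_{2,1}$ are estimated exactly as above, the low-frequency factor being placed in $L^\infty$ (again via $\dot B^{d/p}_{p,1}\hookrightarrow L^\infty$ and Bernstein applied to $\dot S_{q-1}b^\ell$), while the one remaining sub-paraproduct has spectral support confined to $O(1)$ shells near $j_0$ — the low frequencies of $b^\ell$ killing it elsewhere — and is absorbed into the same bound with a constant depending on $j_0$. Thus $a^hb^\ell$ contributes the second term and $a^\ell b^h$ the first.

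The heart of the argument is the low-low product $a^\ell b^\ell$, whose Fourier transform is supported in a ball of radius $\lesssim 2^{j_0}$, so that $\dot\Delta_j(a^\ell b^\ell)=0$ for $j\ge j_0+2$ and only $j\in\{j_0-1,j_0,j_0+1\}$ enter the high norm. On each such block the weight $2^{js_1}$ is comparable, up to a $j_0$-dependent constant, to $2^{js'}$ for any fixed $s'$, which is exactly what allows the two distinct regularity indices $\tfrac dp$ and $s_1-\tfrac d2+\tfrac dp$ appearing on the right to arise. Here I would estimate $\|\dot\Delta_j(a^\ell b^\ell)\|_{L^2}$ by Hölder's inequality with the conjugate pair $(p,p^*)$, $\tfrac1p+\tfrac1{p^*}=\tfrac12$, and Bernstein, using the identity $\tfrac d{p^*}=\tfrac d2-\tfrac dp$ (so that $s_1-\tfrac d{p^*}=s_1-\tfrac d2+\tfrac dp$) to record the loss incurred when moving an $L^p$-type factor into the $L^2$-valued output; distributing the two factors between $L^p$ and $L^{p^*}$ in the two possible ways then yields the purely low quadratic term $\|b\|^l_{\dot B^{d/p}_{p,1}}\|a\|^l_{\dot B^{d/p}_{p,1}}$ and the mixed term $\|b\|_{\dot B^{d/p}_{p,1}}\|a\|^l_{\dot B^{s_1-d/2+d/p}_{p,1}}$, and summing the finitely many transition blocks closes the inequality. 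I expect the delicate point to be precisely this last step: carrying out the Hölder/Bernstein bookkeeping on the transition shells so that only the four right-hand norms are generated, with no $L^p\to L^2$ estimate used in the wrong direction (illegitimate for $p>2$) — which is exactly where $2\le p\le 4$ is needed, since it forces $p\le p^*$ and permits Bernstein to raise integrability to $p^*$, just as in Lemma \ref{2 2.3}.
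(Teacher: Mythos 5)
There is no proof in the paper to compare against: Lemma \ref{A h} is quoted from \cite{Crin-Barat6} without proof, so your proposal has to stand on its own. Its skeleton is reasonable — Bony decomposition plus the low/high splitting at $j_0$, $\dot{B}^{\frac dp}_{p,1}\hookrightarrow L^\infty$ for the terms where the high-frequency factor occupies the $L^2$ slot (these do give the first two right-hand terms), and the correct identification that $2\le p\le 4$ is what makes $p\le p^*=2p/(p-2)$ and lets Bernstein raise integrability. The gap is exactly at the step you yourself flag as delicate, the low-low interaction $a^\ell b^\ell$ on the transition blocks $j\approx j_0$, where the estimate is asserted rather than carried out, and where the advertised bookkeeping does not produce the stated right-hand side.

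Concretely: for an output block $j\ge j_0-1$ coming from $a^\ell b^\ell$, only the factor whose frequency is within $O(1)$ of $j_0$ gets "pinned'' (and its $L^p$ block norm is indeed $\lesssim_{j_0}\|b\|^l_{\dot B^{d/p}_{p,1}}$), but the other factor is a full low-frequency sum $\dot S_{q}a$ over \emph{all} $q\le j_0$, and its weight cannot be adjusted by $j_0$-dependent constants. Running your plan, Hölder with $(p,p^*)$ gives $\|\dot S_q a\|_{L^{p^*}}\lesssim\sum_{q'\le j_0}2^{q'(\frac dp-\frac d{p^*})}\|\dot\Delta_{q'}a\|_{L^p}=\|a\|^l_{\dot B^{2d/p-d/2}_{p,1}}$ (and the $(p,p)$–into–$L^{p/2}$ route followed by Bernstein on the output yields $\|a\|^l_{\dot B^{0}_{p,1}}$). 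At low frequencies a Besov norm with index $\sigma$ is controlled by one with index $\sigma'$ only when $\sigma\ge\sigma'$, so $\|a\|^l_{\dot B^{2d/p-d/2}_{p,1}}$ is dominated by the stated factor $\|a\|^l_{\dot B^{s_1-\frac d2+\frac dp}_{p,1}}$ only if $s_1\le\frac dp$, and by $\|a\|^l_{\dot B^{d/p}_{p,1}}$ only if $p=2$; neither is assumed ($s_1>0$ is arbitrary, and in the paper's own application $s_1=\frac d2>\frac dp$ with $p$ possibly $>2$). The alternative pairing ($L^\infty$ on the unpinned factor, $L^2$ on the pinned one) is unavailable because the pinned block may lie strictly below the overlap $\{j_0-1,j_0\}$ and is then only $L^p$-controlled, and Bernstein cannot lower integrability from $L^p$ to $L^2$ for $p>2$. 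So "distributing the two factors between $L^p$ and $L^{p^*}$ in the two possible ways'' does not, as claimed, yield the third and fourth terms; closing this step requires a genuinely different argument (or the precise form of the estimate proved in \cite{Crin-Barat6}), and as written your proof does not go through.
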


Next, we introduce typical estimates for the composition of functions in the proof of uniqueness.

\begin{lemma}\cite{Crin-Barat6}\label{A.F(m)}
	Assume that $F(m)$ is a smooth function satisfying $F'(0)=0$. Let $1\le p\le \infty$. For any couple $(m_1,m_2)$ of functions in $\dot{B}^{\frac{d}{p}}_{p,1}\cap L^\infty$, there exists a constant $C_{m_1,m_2}>0$ depending on $F''$ and $\|(m_1,m_2)\|_{L^\infty}$ such that
	\begin{itemize}
		\item Let $-\min\{\frac{d}{p},d(1-\frac{1}{p})\}<s\le\frac{d}{p}$ and $1\le r\le \infty$. Then, we have
		\begin{align}
			\|F(m_1)-F(m_2)\|_{\dot{B}^{s}_{p,r}}\le C\|(m_1,m_2)\|_{\dot{B}^{\frac{d}{p}}_{p,1}}\|m_1-m_2\|_{\dot{B}^{\frac{s}{p}}_{p,r}}.
		\end{align}
		\item In the limiting case $r=\infty$, for any $-\min\{\frac{d}{p},d(1-\frac{1}{p})\}\le s<\frac{d}{p}$, it holds that
		\begin{align}
			\|F(m_1)-F(m_2)\|_{\dot{B}^{s}_{p,\infty}}\le C\|(m_1,m_2)\|_{\dot{B}^{\frac{d}{p}}_{p,1}}\|m_1-m_2\|_{\dot{B}^{\frac{s}{p}}_{p,\infty}}.
		\end{align}
	\end{itemize}
\end{lemma}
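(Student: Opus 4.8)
The plan is to reduce the difference $F(m_1)-F(m_2)$ to a product $(m_1-m_2)\,G$ by the fundamental theorem of calculus, to estimate the ``multiplier'' $G$ in the algebra $\dot{B}^{\frac{d}{p}}_{p,1}$ by means of the composition Lemma \ref{A.13}, and then to close the estimate with the product law of Lemma \ref{2 2.7}. First I would set $m_\tau:=m_2+\tau(m_1-m_2)$ for $\tau\in[0,1]$ and write
\[
F(m_1)-F(m_2)=(m_1-m_2)\int_0^1 F'(m_\tau)\,d\tau=:(m_1-m_2)\,G .
\]
Since $F$ is smooth with $F'(0)=0$, Hadamard's lemma provides a smooth function $\Phi$ with $F'(y)=y\,\Phi(y)$, namely $\Phi(y)=\int_0^1 F''(\theta y)\,d\theta$; hence $G=\int_0^1 m_\tau\,\Phi(m_\tau)\,d\tau$. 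Observe that $m_\tau\in\dot{B}^{\frac{d}{p}}_{p,1}\cap L^\infty$ with $\|m_\tau\|_{\dot{B}^{\frac{d}{p}}_{p,1}}\le\|(m_1,m_2)\|_{\dot{B}^{\frac{d}{p}}_{p,1}}$ and $\|m_\tau\|_{L^\infty}\le\|(m_1,m_2)\|_{L^\infty}$.

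Next I would bound $G$ in $\dot{B}^{\frac{d}{p}}_{p,1}$. Because constants are annihilated by every $\dot{\Delta}_j$, one has $\|\Phi(m_\tau)\|_{\dot{B}^{\frac{d}{p}}_{p,1}}=\|\Phi(m_\tau)-\Phi(0)\|_{\dot{B}^{\frac{d}{p}}_{p,1}}$, and since $\Phi-\Phi(0)$ vanishes at the origin, Lemma \ref{A.13} (first part, with $s=\frac{d}{p}>0$) gives $\|\Phi(m_\tau)-\Phi(0)\|_{\dot{B}^{\frac{d}{p}}_{p,1}}\le C\,\|m_\tau\|_{\dot{B}^{\frac{d}{p}}_{p,1}}$ with $C$ depending only on $\|m_\tau\|_{L^\infty}$ and on $F''$ and its derivatives. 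Combining this with the product inequality $\|ab\|_{\dot{B}^{\frac{d}{p}}_{p,1}}\lesssim\|a\|_{L^\infty}\|b\|_{\dot{B}^{\frac{d}{p}}_{p,1}}+\|b\|_{L^\infty}\|a\|_{\dot{B}^{\frac{d}{p}}_{p,1}}$ of Lemma \ref{2 2.7} (first inequality, $s=\frac{d}{p}>0$) together with the embedding $\dot{B}^{\frac{d}{p}}_{p,1}\hookrightarrow L^\infty$, I obtain
\[
\|F'(m_\tau)\|_{\dot{B}^{\frac{d}{p}}_{p,1}}=\|m_\tau(\Phi(m_\tau)-\Phi(0))+\Phi(0)m_\tau\|_{\dot{B}^{\frac{d}{p}}_{p,1}}\le C_{m_1,m_2}\,\|m_\tau\|_{\dot{B}^{\frac{d}{p}}_{p,1}},
\]
where $C_{m_1,m_2}$ depends only on $\|(m_1,m_2)\|_{L^\infty}$ and $F''$. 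Integrating in $\tau$ by Minkowski's inequality then yields $\|G\|_{\dot{B}^{\frac{d}{p}}_{p,1}}\le C_{m_1,m_2}\|(m_1,m_2)\|_{\dot{B}^{\frac{d}{p}}_{p,1}}$.

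Finally I would apply the product law $\|ab\|_{\dot{B}^{s}_{p,r}}\lesssim\|a\|_{\dot{B}^{\frac{d}{p}}_{p,1}}\|b\|_{\dot{B}^{s}_{p,r}}$ with $a=G$ and $b=m_1-m_2$. This is exactly the second (for $r=1$) and third (for $r=\infty$) inequalities of Lemma \ref{2 2.7}, and for general $r\in[1,\infty]$ it follows from the same Bony-decomposition argument; its validity range is $-\min\{\frac{d}{p},\frac{d}{p'}\}<s\le\frac{d}{p}$, respectively $-\min\{\frac{d}{p},\frac{d}{p'}\}\le s<\frac{d}{p}$ when $r=\infty$, which is precisely the range in the two items of the statement. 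Together with the bound on $G$, this gives
\[
\|F(m_1)-F(m_2)\|_{\dot{B}^{s}_{p,r}}\lesssim\|G\|_{\dot{B}^{\frac{d}{p}}_{p,1}}\|m_1-m_2\|_{\dot{B}^{s}_{p,r}}\le C_{m_1,m_2}\,\|(m_1,m_2)\|_{\dot{B}^{\frac{d}{p}}_{p,1}}\,\|m_1-m_2\|_{\dot{B}^{s}_{p,r}},
\]
which is the claimed estimate.

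The argument carries no serious obstacle; the points requiring care are (i) checking that the index $s$ in each item lies in the range where the product law of Lemma \ref{2 2.7} applies — the lower restriction comes from the low--high paraproduct acting on $m_1-m_2$, the upper restriction $s\le\frac{d}{p}$ from the remainder term — and (ii) keeping the constant dependent only on $\|(m_1,m_2)\|_{L^\infty}$ and $F''$, which is exactly why one must subtract $\Phi(0)$ before invoking Lemma \ref{A.13} rather than applying a composition estimate to $\Phi(m_\tau)$ directly. One should also note that for $r\notin\{1,\infty\}$ the needed product law, while entirely standard, is not literally stated in Lemma \ref{2 2.7}.
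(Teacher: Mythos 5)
There is nothing in the paper to compare your argument with: Lemma \ref{A.F(m)} is imported verbatim from \cite{Crin-Barat6} and no proof is given in this paper, so your proposal has to stand on its own. Judged that way, it is the standard argument and is correct in substance. Writing $F(m_1)-F(m_2)=(m_1-m_2)\int_0^1F'(m_\tau)\,d\tau$, factoring $F'(y)=y\,\Phi(y)$ with $\Phi(y)=\int_0^1F''(\theta y)\,d\theta$ (this is exactly where $F'(0)=0$ enters and what makes the constant depend only on $F''$ and its derivatives together with $\|(m_1,m_2)\|_{L^\infty}$), estimating the multiplier $G=\int_0^1 m_\tau\Phi(m_\tau)\,d\tau$ in $\dot B^{\frac{d}{p}}_{p,1}$ by Lemma \ref{A.13} applied to $\Phi(\cdot)-\Phi(0)$ together with the first product law of Lemma \ref{2 2.7}, and closing with the product law $\|ab\|_{\dot B^{s}_{p,r}}\lesssim\|a\|_{\dot B^{\frac{d}{p}}_{p,1}}\|b\|_{\dot B^{s}_{p,r}}$ is precisely how such difference estimates are proved in the cited literature. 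Your reading of the right-hand norm as $\dot B^{s}_{p,r}$ (the printed $\dot B^{\frac{s}{p}}_{p,r}$ being a typo) is also the correct one, as the applications in \eqref{4.28}--\eqref{4.29} confirm.

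One caveat on the last step: you assert that the product law is valid on ``precisely the range in the two items''. For $r=1$ with $-\min\{\frac{d}{p},\frac{d}{p'}\}<s\le\frac{d}{p}$, and for $r=\infty$ with $s<\frac{d}{p}$, this is literally Lemma \ref{2 2.7}; for $1<r<\infty$ with $s<\frac{d}{p}$ it follows by the same Bony argument, as you say. But the first item as transcribed also contains the corner $s=\frac{d}{p}$ with $r>1$, and there the low--high paraproduct $T_{m_1-m_2}G$ only yields $\|T_ba\|_{\dot B^{d/p}_{p,r}}\lesssim\|b\|_{L^\infty}\|a\|_{\dot B^{d/p}_{p,r}}$, which cannot be converted into $\|b\|_{\dot B^{d/p}_{p,r}}$ when $r>1$ because $\dot B^{d/p}_{p,r}\not\hookrightarrow L^\infty$; so your proof (like the loosely transcribed statement itself, which is internally inconsistent at $r=\infty$, $s=\frac{d}{p}$) covers all cases except that borderline corner. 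Since the paper only ever invokes the lemma with $r=1$, $s=\frac{d}{p}$ or $r=\infty$, $s<\frac{d}{p}$, this does not affect anything downstream, but the claim of an exact match of ranges should be softened accordingly.
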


\bigbreak\bigbreak
\noindent{\bf Funding}: The research was supported by National Natural Science Foundation of China (No. 11971100) and Natural Science Foundation of Shanghai (Grant No. 22ZR1402300).\\
{\bf Conflict of Interest}: The authors declare that they have no conflict of interest.

\bibliographystyle{plain}

\end{document}